\documentclass[11pt,reqno]{amsart}

\usepackage{amssymb}

\usepackage[english]{babel}

\usepackage[dvipsnames,svgnames,x11names,hyperref]{xcolor}
\usepackage[hyphens]{url}
\usepackage[colorlinks=true,linkcolor=Maroon,citecolor=blue,urlcolor=blue,hypertexnames=false,linktocpage]{hyperref}
\usepackage{bookmark}
\usepackage{amsmath,thmtools,mathtools}
\mathtoolsset{showonlyrefs=true}
\usepackage{bm}
\usepackage{mathtools}

\usepackage{fancyhdr}
\usepackage{esint}
\usepackage{enumerate}

\usepackage[scr]{rsfso}
\usepackage{mathtools}
\usepackage{pictexwd,dcpic}
\usepackage{graphicx}

\usepackage[labelfont=bf, font=small]{caption}

\usepackage{tikz}
\usepackage{subcaption}

\usetikzlibrary{
	arrows.meta,
	angles,
	quotes,
	calc,
	decorations.pathreplacing
}

\newcounter{mgncount}

\declaretheorem[name=Theorem,numberwithin=section]{thm}
\declaretheorem[name=Remark,style=remark,sibling=thm]{rem}
\declaretheorem[name=Lemma,sibling=thm]{lemma}
\declaretheorem[name=Proposition,sibling=thm]{prop}

\declaretheorem[name=Definition,style=definition,sibling=thm]{defn}
\declaretheorem[name=Corollary,sibling=thm]{cor}

\numberwithin{equation}{section}

\newcommand{\ep}{\epsilon}
\newcommand{\eps}{\varepsilon}

\DeclareMathOperator{\dist}{dist}

\numberwithin{equation}{section}

\begin{document}
	
	\title[VP-MCF near spheres]{Geometric \(\bm{C^1}\)-stability of geodesic spheres under
		volume-preserving mean curvature flow}

	\author{Esther Cabezas-Rivas}
	\address{\flushleft\parbox{\linewidth}{{\bf Esther Cabezas-Rivas} \\Universitat de Val\`encia\\ Departament de Matem\`atiques\\ Av. Vicent Andrés Estellés~19\\ 46100 Burjassot\\ Spain\\ {\href{mailto:esther.cabezas-rivas@uv.es}{esther.cabezas-rivas@uv.es}}}}
	
	\author{Carlo Sinestrari}
	\address{\flushleft\parbox{\linewidth}{{\bf Carlo Sinestrari} \\Università degli Studi di Roma "Tor Vergata"\\ Dipartimento di Matematica\\Via della Ricerca Scientifica\\  00133, Roma\\ Italy\\ {\href{mailto:sinestra@mat.uniroma2.it}{sinestra@mat.uniroma2.it}}}}

	\date{\today. }

	\begin{abstract}
		We prove stability of geodesic spheres under volume-preserving mean
		curvature flow in the simply connected space forms under genuinely
			first-order geometric assumptions. The initial hypersurface is required to lie in a sufficiently thin annulus around a geodesic sphere and to have outward  normal making a small angle with the radial direction. No convexity, mean convexity, initial curvature bound, or Hölder control of the normal is assumed.

		The main ingredient is a quantitative Alexandrov reflection argument that
		converts these initial assumptions into a strong reflection property
		preserved by the flow. The resulting radial and angular control allows us
		to derive estimates for the mean curvature and the full second fundamental
		form, and hence long-time existence, eventual convexity, and smooth
		exponential convergence to a geodesic sphere. In hyperbolic space, the
		eventual convexity can be strengthened to horospherical convexity. The
		reflection threshold also yields a quantitative estimate for the
		displacement of the limiting centre.
		
		For Lipschitz initial hypersurfaces, we develop a separate positive-time
		curvature estimate that is uniform along smooth approximations. This
		produces an instantaneously smooth evolution with the same eventual
		convexity and smooth exponential convergence.

	\end{abstract}
	\keywords{Mean curvature flow; stability; volume-preserving; Alexandrov reflection}
	\subjclass[2020]{53E10, 35B35, 53C21, 35B40}
	
	\date{\today. This work  has been partially supported by project PID2025-174652NB-I00, funded by MICIU/AEI
		/10.13039/501100011033 and by ERDF, EU. The first author has also been partially supported by project CIAICO/2023/035, funded by the Conselleria d’Educació, Cultura, Universitats i Ocupació. 
		The second author has been partially supported by MUR (Ministero dell'Universit\`a e della Ricerca) Project MatMod@TOV 2037-2027, CUP E83C23000330006 and
		by the MUR Prin 2022 Project CUP E53D23005750006. The second author is a member of INdAM (Istituto Nazionale di Alta Matematica).
	}
	
	\maketitle

	\section{Introduction and main results}

	Volume-preserving mean curvature flow is the constrained gradient flow of area under fixed enclosed volume. Its equilibria are closed constant-mean-curvature hypersurfaces, including round spheres in Euclidean space. 
	 A fundamental problem is to identify  conditions on the initial data that prevent 	 singularity formation and force the flow to exist for all time and converge to a steady state. Classical convergence results
	\cite{Hu87,EsSi98,LiVPMCF,AlFr03,CabezasRivasMiquel} typically require convexity,    pointwise or integral curvature pinching, or closeness to a sphere in a topology that quantitatively controls the variation of the normal.
	
	 In this paper, we show that the stability of geodesic spheres can be established under purely first-order geometric assumptions. The initial hypersurface is
		required only to lie in a sufficiently thin annulus around a geodesic
		sphere and to have outward unit normal making a small angle with the radial
		direction. No convexity or mean convexity is assumed, and no initial
		curvature bound or quantitative modulus of continuity of the normal is
		imposed. We prove that the corresponding volume-preserving mean curvature
		flow exists for all time, becomes strictly convex, and converges smoothly
		and exponentially to a geodesic sphere. We also extend the construction to
		Lipschitz initial hypersurfaces by deriving positive-time smoothing
		estimates that are uniform along smooth approximations.

	To state our results precisely,  let \(M\) be a smooth, closed, $n$-dimensional manifold and let $X_0:M\rightarrow\mathbb R^{n+1}$
	be a smooth embedding. We denote by \(M_0=X_0(M)\) the corresponding
	embedded hypersurface and assume that \(M_0\) bounds a domain
	\(\Omega_0\). Let \(R_0>0\) be determined by
	\(
	\operatorname{vol}(\Omega_0)
	=
	\omega_{n+1}R_0^{n+1},
	\)
	where \(\omega_{n+1}\) denotes the volume of the unit ball in
	\(\mathbb R^{n+1}\).

	We assume that \(M_0\) is \(C^1\)-close to the sphere
	\(\partial B_{R_0}(0)\) in the following sense: there exist
	\(\varepsilon_0\in(0,1)\) and \(\theta_0\in(0,\pi/2)\) such that
	\begin{equation}\label{Haus}
	R_0(1-\varepsilon_0)
	\le |x|\le
	R_0(1+\varepsilon_0)
	\qquad\text{for every }x\in M_0,
	\end{equation}
	and
	\begin{equation}\label{nor-osc}
	\left\langle
	\nu_0(x),\frac{x}{|x|}
	\right\rangle
	\ge\cos\theta_0
	\qquad\text{for every }x\in M_0,
	\end{equation}
	where \(\nu_0\) is the outward unit normal to \(M_0\).
	
	We refer to \eqref{Haus} and
	\eqref{nor-osc} as the annular-closeness and the normal-oscillation assumption, respectively. 
	They can be regarded as a geometric formulation of  \(C^1\)-closeness to \(\partial B_{R_0}(0)\).
	Indeed, \eqref{nor-osc} implies that \(M_0\) is star-shaped with respect
	to the origin, so that \(M_0\) is a radial graph over \(\mathbb S^n\);
	then \eqref{Haus} and \eqref{nor-osc} are equivalent to a bound on the height
	and gradient of its graph function.

	We consider the volume-preserving mean curvature flow
	\begin{equation}\label{vp-mcf}
	\partial_tX(p,t)
	=
	\bigl(h(t)-H(p,t)\bigr)\nu(p,t),
	\qquad p\in M,\quad t>0,
	\end{equation}
	with \(X(\cdot,0)=X_0\), where $h(t)$ is the averaged mean curvature, that is,
	\begin{equation}\label{average-H}
	h(t)
	=
	\frac{1}{|M_t|}
	\int_{M_t}H\,d\mu_t.
	\end{equation}
	Here \(M_t=X(M,t)\), while \(H\) and \(\nu\) denote its mean curvature
	and outward unit normal, respectively. Our sign convention is such that
	\(-H\nu\) points inwards on a convex hypersurface (i.e. with $\kappa_1 \geq 0$, where $\kappa_1 \leq \cdots \leq \kappa_n$ denote the principal curvatures).
	
	\medskip
	Our stability result in the Euclidean setting reads as follows.

	\begin{thm}\label{thm:main}
		For every $n\geq 1$, there exists $\delta_*=\delta_*(n)>0$ with the following property.
		Let \(M_0\subset\mathbb R^{n+1}\) satisfy \eqref{Haus} and
		\eqref{nor-osc}  with $\varepsilon_0+\theta_0<\delta_*. $ Then  \eqref{vp-mcf} satisfies the following:
		\begin{enumerate}
			\item[{\rm (a)}] It  has
			a unique smooth solution, defined for every \(t\ge0\). 
			\item[{\rm (b)}] The hypersurfaces \(M_t\)
			remain uniformly star-shaped and satisfy uniform curvature bounds. 
			\item[{\rm (c)}] There exists \(T<\infty\)  such that  \(M_t\) is strictly convex for every \(t\ge T\).
			\item[{\rm (d)}] As $t \to \infty$, 
			$
			M_t$ converges
			exponentially fast in the $C^\infty$ topology to a round sphere $ B_{R_0}(p_\infty)
			$ for some \(p_\infty\in\mathbb R^{n+1}\).
			More precisely, for every
			\(k\in\mathbb N\), there exist constants \(C_k,\lambda_k>0\) such that
			\[
			\operatorname{dist}_{C^k}
			\bigl(M_t,\partial B_{R_0}(p_\infty)\bigr)
			\le C_ke^{-\lambda_kt} \qquad  \text{	for all sufficiently large \(t\). }
			\]
			\item[{\rm (e)}] The centre of the limiting sphere satisfies
			\[
			|p_\infty|
			\le
			\frac{R_0}{2}\Phi(\varepsilon_0,\theta_0), \qquad \text{where} \quad
			\Phi(\varepsilon,\theta)
			=
			(1+\varepsilon)\sin\theta
			+
			\sqrt{
				(1+\varepsilon)^2\sin^2\theta+4\varepsilon
			}.
			\]
		\end{enumerate}
	\end{thm}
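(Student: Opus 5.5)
The plan is to run Alexandrov reflection quantitatively, in the spirit of Chow--Gulliver for mean curvature flow. For a unit vector \(e\in\mathbb S^n\) and \(s\in\mathbb R\), let \(H_{e,s}=\{x:\langle x,e\rangle=s\}\) and let \(\mathcal R_{e,s}\) be the reflection across it. We say \(M_t\) has the \emph{strong reflection property} with threshold \(s_0\) if, for every \(e\) and every \(s\ge s_0\), the reflection of the part of \(\Omega_t\) lying in \(\{\langle x,e\rangle>s\}\) is contained in \(\Omega_t\).

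\textbf{Step 1: the initial surface has the property.} First we show that \eqref{Haus} and \eqref{nor-osc} imply the property for \(M_0\) with \(s_0=\tfrac{R_0}{2}\Phi(\varepsilon_0,\theta_0)\). Elementary geometry should give this directly. Take a point \(x\in M_0\) with \(\langle x,e\rangle>s\). Its reflection \(\mathcal R_{e,s}x\) lies strictly inside the radial graph, for two reasons. The annulus constraint bounds how far the radius can drop across the reflection; this is where the term \(4\varepsilon\) enters. The angle condition makes \(M_0\) a graph over \(H_{e,s}\) near the cap, in the \(e\)-direction, with slope controlled by \(\tan\theta_0\); this is where the term \((1+\varepsilon)\sin\theta\) enters. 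Solving the resulting quadratic inequality in \(s/R_0\) should produce exactly the root \(\Phi/2\). The smallness \(\varepsilon_0+\theta_0<\delta_*\) guarantees \(s_0<R_0/2\), say.

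\textbf{Step 2: the property is preserved by the flow.} Next we show the property is preserved by \eqref{vp-mcf}. The nonlocal term \(h(t)\) is the same for \(M_t\) and for its reflection, so the two evolving graphs over \(H_{e,s}\) satisfy the same quasilinear parabolic equation. Their difference therefore obeys a linear equation, and the strong maximum principle excludes a first interior contact. The boundary contact on \(H_{e,s}\) is handled by the Hopf lemma, as in Chow--Gulliver. As long as the smooth solution exists, this yields for every \(t\) two facts.

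\begin{enumerate}
\item[(i)] \emph{Radial control.} \(M_t\) lies in an annulus of width \(O(s_0)\) about a sphere of radius comparable to \(R_0\); volume preservation and the isoperimetric inequality fix the radius.
\item[(ii)] \emph{Angular control.} \(\langle \nu,x/|x|\rangle\ge c>0\) uniformly. Reflection across planes not containing \(x\) forces \(\nu\) to make a quantitatively small angle with \(x\), up to a recentering inside \(B_{s_0}\).
\end{enumerate}

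In particular the flow stays uniformly star-shaped. This gives the first half of (b). It also gives (e): every limiting sphere centre \(p\) must satisfy \(|p|\le s_0\), since reflections across planes at distance greater than \(s_0\) from the origin push the set inward, which is incompatible with a sphere centred beyond \(s_0\).

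\textbf{Step 3: curvature bounds.} With uniform star-shapedness in hand, the support function \(u=\langle x,\nu\rangle\ge c\) is available. The standard Huisken/Ecker--Huisken estimate applies to \(|A|^2/(u-c/2)^2\), or to \(H/(u-c/2)\). The nonlocal term needs care: \(h(t)\) must be bounded a priori. Star-shapedness gives an area bound, and a Minkowski formula then gives \(\int H\le C\) on radial graphs with bounded gradient, so \(|h|\le C\). The resulting interior-in-time estimate takes the form \(|A|^2\le C(1+1/t)\). Short-time existence for smooth data covers small \(t\), and for \(t\) away from zero the estimate is uniform. Higher derivative bounds then follow from graphical parabolic theory (Krylov--Safonov and Schauder estimates). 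This gives (a) and the rest of (b); uniqueness is standard for smooth data.

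\textbf{Step 4: convergence and convexity.} Area is nonincreasing, and \(\int_0^\infty\!\int (H-h)^2<\infty\). Together with the uniform smooth bounds, this gives subsequential convergence to a closed embedded constant-mean-curvature hypersurface. By Alexandrov's theorem, that limit is a sphere of radius \(R_0\) with centre in \(\overline{B_{s_0}}\). Near a sphere, the flow is a graph whose linearization is the Jacobi operator. Volume preservation removes the constant mode, and the translation modes are accounted for by allowing the centre to move. Standard Łojasiewicz or center-manifold arguments, as in Escher--Simonett, then give exponential \(C^k\) convergence to a single sphere \(\partial B_{R_0}(p_\infty)\), which is (d). Strict convexity for \(t\ge T\), which is (c), follows from \(C^2\) closeness to the sphere.

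\textbf{Main obstacle.} The hardest part is Step 2(ii) combined with Step 3: extracting a uniform gradient (angle) bound purely from the reflection property, with constants independent of any curvature bound on \(M_0\), and making the curvature estimate work despite the nonlocal \(h(t)\). For the angle bound, I would first try the Chow--Gulliver argument: if \(\nu(x)\) deviates from the radial direction by more than a threshold, take the plane through a point near \(x\) orthogonal to the tangential component of \(\nu\). That plane lies at distance at least \(s_0\) from the origin, and reflecting across it yields a contradiction with the reflection property.
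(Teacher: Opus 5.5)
The genuine gap is in Step~3, where you assert that star-shapedness and a Minkowski formula give $|h|\le C$. The Minkowski identity is $\int_M Hu\,d\mu=n|M|$. Since the hypotheses give $H$ no sign, it bounds neither $\int_M H\,d\mu$ nor $|h|$. In fact, for $n\ge2$ the total mean curvature is not controlled by first-order data at all. For a radial graph with $\varphi=\log\rho$ one has $H\,d\mu=\rho^{n-1}\bigl(n-\Delta_\sigma\varphi+\varphi^i\varphi^j\varphi_{ij}/(1+|\nabla\varphi|^2)\bigr)\,d\sigma$. After integration by parts, the term $-\frac12\int\rho^{n-1}\log(1+|\nabla\varphi|^2)\,\Delta_\sigma\varphi\,d\sigma$ survives. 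I expect that small-amplitude, high-frequency two-dimensional wrinkles with bounded gradient can make this term arbitrarily large; this is the point of the remark following Lemma~\ref{bound-h}. So $h(0)$ is not bounded in terms of $n,R_0,\varepsilon_0,\theta_0$. Consequently, neither your Tso/Ecker--Huisken estimate nor the claimed bound $|A|^2\le C(1+1/t)$ follows as written. Moreover, with your normalization $\alpha=c/2$ (so $u-\alpha\approx R_0/2$), the crude bound $|h|\le\sqrt n\max|A|$ is too weak for $n\ge2$. It does not let you absorb $2h\operatorname{tr}(A^3)$ into the good quartic term of $|A|^2/(u-\alpha)^2$.

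The missing idea is that $h$ can only be controlled relative to $\max|H|$, with a small coefficient. Lemma~\ref{bound-h} gives $|h-n/R_0|\le\frac{\eta_0}{R_0}\max|H|$ from the Minkowski formula and $|u-R_0|\le\eta_0$. The paper then runs Tso's argument for $Z=H/(u-\alpha)$ with $\alpha=R_0-2\eta_0$, so that $\eta_0\le u-\alpha\le3\eta_0$. It treats $h\ge0$ and $h\le0$ separately and obtains coupled barrier inequalities for $Z_{\max}$ and $Z_{\min}$ whose cross-coefficients are $O(\eta_0^2)$ (Proposition~\ref{bound-H}). Only once $|H|+|h|\le C$ is known does it apply the Ecker--Huisken weight $|A|^2\varphi(v^2)$, $v=\langle\nu,x/|x|\rangle^{-1}$ (Proposition~\ref{shape-bdd}).

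A bound independent of the initial curvature, as you claim, does hold, but by a different mechanism (proof of Theorem~\ref{thm:lipschitz-space-forms}). There one uses $|h|\le\sqrt n\max|A|$ and absorbs $2h\operatorname{tr}(A^3)\varphi(v^2)$ into $-2k\mathscr F^2$ by choosing $\sqrt n/(1+\sqrt n)<k<q_*^2$, which requires the angle to be small.

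A smaller problem is your sketch of the angle bound. A plane through $x$ orthogonal to the component of $\nu$ perpendicular to $x$ passes through the origin, so it is not at distance $\ge s_0$. The right direction (Proposition~\ref{prop:static-normal-control}, after Chow) is $e=x^\top/|x^\top|$, the tangential projection of the position vector. Then $e\in T_xM$ and $\langle x,e\rangle=|x^\top|=|x|\sin\angle(\nu,x)$. The transversality clause of \emph{strict} reflection therefore forces $|x^\top|\le c_0$. Your definition omits that clause, and the boundary case of Chow--Gulliver also needs it. You should also take the threshold strictly above $\frac{R_0}{2}\Phi$ and obtain (e) in the limit. Your Step~4, applying Escher--Simonett to a late, $C^\infty$-close slice, is a legitimate alternative to the paper's use of Huisken's and Gage's convex theorems.
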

	
	In particular,
	$
	|p_\infty|
	\le
	R_0\bigl(2 \theta_0+\sqrt{\varepsilon_0}\bigr)
	$
	for sufficiently small initial parameters, giving a control on the
	displacement from the reference centre.

	Our method of proof is based on the Alexandrov reflection technique, adapted to the setting of geometric flows given by Chow and Gulliver \cite{Chow98}. The approach is therefore direct and geometric compared with some of the techniques of the previous literature, e.g. integral estimates \cite{Hu87,LiVPMCF} or the functional analytic techniques employed in \cite{EsSi98}. 
	
The Euclidean result contains the core reflection mechanism, but extending
	it to the nonflat space forms requires additional geometric input.
	Alexandrov reflection remains available after replacing affine
	hyperplanes by totally geodesic hypersurfaces. In the spherical case,
	however, the reflection construction is necessarily restricted to a
	hemisphere, while in both nonflat settings the evolution equations contain
	ambient-curvature terms that must be incorporated into the curvature
	estimates and the late-time analysis.
	
	Hence, with some additional work, we can extend our $C^1$-stability result to the non-flat setting as follows. Let \(N_K^{n+1}\) denote the simply connected space form of constant sectional curvature \(K\in\{-1, 1\}\), and fix
	\(o\in N_K^{n+1}\). Now \(B_R^K(p)\) represents the geodesic ball of radius \(R\) centred at \(p\in N_K^{n+1}\).
	Consider 
	\(M_0\subset N_K^{n+1}\)  a smooth, closed and embedded hypersurface
	enclosing a domain \(\Omega_0\), and let \(R_0>0\) be determined by
	\(
	\operatorname{vol}(\Omega_0)
	=
	\operatorname{vol}\bigl(B_{R_0}^K(o)\bigr).
	\)
	Writing
	$
	r(x)=\operatorname{dist}_{N_K}(o,x),
	$
	we assume that
	\begin{equation}\label{eq:space-form-initial-annulus}
	R_0(1-\varepsilon_0)
	\le
	r(x)
	\le
	R_0(1+\varepsilon_0)
	\qquad\text{for every }x\in M_0,
	\end{equation}
	\begin{equation}\label{eq:space-form-initial-angle}
	\left\langle\nu_0,\partial_r\right\rangle
	\ge
	\cos\theta_0
	\qquad\text{on }M_0.
	\end{equation}
	
	\begin{thm}[Nonflat space-form extension]\label{space-form-main}
		Let  $1 \leq n \in \mathbb N$, \(K\in\{-1, 1\}\), and let
		\(M_0\subset N_K^{n+1}\) satisfy
		\eqref{eq:space-form-initial-annulus} and
		\eqref{eq:space-form-initial-angle}. In the spherical case, assume also
		that
		$
		R_0(1+\varepsilon_0)<\frac{\pi}{2}.
		$ If \(\varepsilon_0\) and \(\theta_0\) are sufficiently small,  then
		conclusions \textup{(a)--(d)} of Theorem~\ref{thm:main} hold, with
		round spheres replaced by geodesic spheres in \(N_K^{n+1}\). In
		particular, the limiting hypersurface is
		$
		\partial B_{R_0}^K(p_\infty)
		$
		for some \(p_\infty\in N_K^{n+1}\). In addition, when \(K=-1\), the evolving
		hypersurfaces become strictly horospherically convex for all
		sufficiently large times. The conclusion about the centre is now
		\[
		\operatorname{dist}_{N_K}(o,p_\infty)
		\le
		\omega_K(R_0,\varepsilon_0,\theta_0),
		\]
		for a suitable positive function $\omega_K$ satisfying
		$
		\omega_K(R_0,\varepsilon_0,\theta_0)
		\to 0$
		as
		$\varepsilon_0+\theta_0\to 0$.
	\end{thm}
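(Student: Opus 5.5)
The plan is to transplant the Euclidean argument of Theorem~\ref{thm:main} to $N_K^{n+1}$, replacing affine hyperplanes by totally geodesic hypersurfaces and adding the ambient-curvature terms to every evolution equation.

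\textbf{Step 1: a quantitative reflection property at $t=0$.} For each unit vector $e\in T_oN_K$ and each $s$ with $|s|\ge s_0$, consider the totally geodesic hypersurface $\Pi_{e,s}$ orthogonal at distance $s$ to the geodesic through $o$ in direction $e$. Let $\rho_{e,s}$ be the isometric reflection across it. The claim is that $\rho_{e,s}$ maps the part of $\Omega_0$ beyond $\Pi_{e,s}$ into $\Omega_0$. Here $s_0=\omega_K(R_0,\varepsilon_0,\theta_0)$ is an explicit threshold. To prove it, write $M_0$ as a radial graph $r=u(\xi)$ over the unit sphere of $T_oN_K$. The annulus condition \eqref{eq:space-form-initial-annulus} bounds $u$, and the angle condition \eqref{eq:space-form-initial-angle} bounds $|\nabla u|$ by a multiple of $\tan\theta_0$, with constants depending on $\mathrm{sn}_K(R_0)$ and $\mathrm{cs}_K(R_0)$. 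The comparison between a point and its reflection then splits into two regimes:
\begin{itemize}
\item far from $\Pi_{e,s}$, where the annulus width alone separates the reflected cap from $M_0$;
\item near $\Pi_{e,s}$, where the normal is nearly radial and hence transversal to $\Pi_{e,s}$, as in the Euclidean computation giving $\Phi$.
\end{itemize}
In the spherical case we use $R_0(1+\varepsilon_0)<\pi/2$ to keep everything inside an open hemisphere centred at $o$. There the reflections are well defined and the reflected domain does not wrap around.

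\textbf{Step 2: preservation under the flow.} We follow Chow--Gulliver. Since $\rho_{e,s}$ is an isometry of $N_K$, the reflected flow is again a solution of \eqref{vp-mcf} with the same $h(t)$, because $h$ is reflection invariant only through the full hypersurface. More carefully, the two pieces move with the same global $h(t)$, so the difference of speeds is $H$ minus $H$. The strong maximum principle for the graph difference over $\Pi_{e,s}$ then shows that the first interior or boundary touching cannot occur, with the Hopf lemma handling the boundary.

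From the reflection property for all directions and all $|s|\ge s_0$, we extract two consequences as in the Euclidean case:
\begin{itemize}
\item every $M_t$ lies in an annulus of width $O(s_0)$ around some moving centre;
\item the normal makes an angle $O(s_0/R_0)$ with the radial field of points at distance at least about $s_0$ from that centre.
\end{itemize}
Thus $M_t$ stays uniformly star-shaped about a ball of centres. Combined with volume preservation, this gives two-sided radius bounds.

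\textbf{Step 3: curvature estimates and long-time existence.} Uniform star-shapedness gives a lower bound for the support function $\langle \mathrm{sn}_K(r)\partial_r,\nu\rangle$. Its evolution now contains extra terms $nK\,\mathrm{sn}_K\,\cdot$ and $\mathrm{cs}_K$ factors. We apply the Tso-type quotient $H/(\text{support}-c)$ and then $|A|^2/(\ldots)$ to bound $H$ and $|A|$. The ambient terms, $K(n|A|^2-H^2)$-type contributions in $\partial_t|A|^2$, are of lower order and absorbed for bounded $|A|$. This requires first bounding $h(t)$, which follows from the isoperimetric and area bounds given by the radial graph. Higher derivatives follow from standard interior estimates, giving (a) and (b).

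\textbf{Step 4: convergence and the main obstacle.} Area is monotone and bounded below, so $\int_0^\infty\!\int|H-h|^2<\infty$. With the uniform smooth bounds, subsequences converge to closed CMC embedded star-shaped hypersurfaces. By Alexandrov's theorem in space forms, using the hemisphere condition when $K=1$, these are geodesic spheres of radius $R_0$. Hence $M_t$ becomes $C^2$-close to spheres, and is eventually strictly convex, since spheres have $\kappa=\mathrm{ct}_K(R_0)>0$. For $K=-1$ it is eventually h-convex, since $\coth R_0>1$ strictly.

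For exponential convergence we linearize at the sphere. The Jacobi operator $\Delta+|A|^2+nK$ on geodesic spheres has, modulo the volume constraint, kernel consisting exactly of translations, namely the first eigenfunctions. A Łojasiewicz or spectral-gap argument in the style of Escher--Simonett / Huisken then gives exponential decay. This also gives uniqueness of $p_\infty$. The centre bound follows because the reflection property persists to the limit sphere, which forces $\operatorname{dist}(o,p_\infty)\le s_0=\omega_K$.

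I expect the main obstacle to be Step 1 in curved geometry. There, reflections across $\Pi_{e,s}$ no longer commute with radial dilations. In addition, the "near $\Pi$" regime requires comparing angles via Jacobi fields, with distortion constants controlled only when $R_0$ is bounded away from $\pi/2$ (for $K=1$) or with $\cosh R_0$ growth (for $K=-1$). I would first try to handle this by working in the Poincaré or stereographic conformal model, where the $\Pi_{e,s}$ become spheres and the reflections become inversions, and then transfer the Euclidean estimate.
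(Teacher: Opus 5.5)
Your Step~3 rests on the claim that a bound for $h(t)$ ``follows from the isoperimetric and area bounds given by the radial graph''. For $n\ge2$ this is false, and it is exactly the difficulty the paper's mean-curvature estimate is built around.

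Bounds on $|M_t|$ give no control of $\int_{M_t}H\,d\mu$, and neither do annular and tilt bounds. For a graph $x_{n+1}=f$ one has, up to sign and boundary terms, $\int H\,d\mu=\int\Delta f\,\tfrac12\log(1+|Df|^2)\,dx$. For $n=2$, take the wrinkles $f=\epsilon a\,g(x/\epsilon)$ with $g=\cos x_1+\cos x_2+\cos(x_1+x_2)$. They have slope $O(a)$ and height $O(\epsilon a)$. Yet this integral over a unit patch is of order $a^3/\epsilon$, because the torus average of $\Delta g\,|\nabla g|^2$ equals $-1$. So hypersurfaces satisfying your hypotheses can have $|h|$ arbitrarily large, and nothing in Steps~1--2 controls $h$ along the flow. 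Only for curves does Gauss--Bonnet give $\int\kappa\,ds=2\pi-K|\Omega_t|$.

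Without such a bound, the Tso quotient $Z=H/(u-\alpha)$ does not close. Its evolution contains $-\frac{h}{u-\alpha}|A|^2$, which is of the same order as the good term $-\frac{\alpha Z}{u-\alpha}|A|^2$ as soon as $|h|$ is comparable to $|Z|$. Your subsequent $|A|^2$ estimate, also run with $h$ assumed bounded, inherits the same problem.

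The missing ingredient is the Minkowski identity $\int_{M_t}Hu\,d\mu=n\int_{M_t}c_K(r)\,d\mu$, with $u=\langle s_K(r)\partial_r,\nu\rangle$. Combined with the near-constancy of $u$ furnished by the reflection bounds, it gives only $|h|\le C\bigl(1+\delta\max_{M_t}|H|\bigr)$, with $\delta\to0$ as $\varepsilon_0+\theta_0\to0$. Since neither $H$ nor $h$ has a sign, the maximum principle must then be applied to $Z_{\max}$ and $Z_{\min}$ simultaneously, separating the cases $h(t)\ge0$ and $h(t)\le0$. This produces coupled conditions: $\frac{d}{dt}Z_{\max}\le0$ when $Z_{\max}\ge\gamma_1-\delta_1[Z_{\min}]_-$, and $\frac{d}{dt}Z_{\min}\ge0$ when $Z_{\min}\le-\gamma_2-\delta_2Z_{\max}$. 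These close only because $\delta_1,\delta_2<1$ for small initial parameters. Once $|H|$, and hence $|h|$, is bounded, your quotient estimate for $|A|^2$ (or the paper's weight $|A|^2\varphi(v^2)$) goes through, with the ambient terms of lower order.

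The rest of your outline broadly matches the paper, with two remarks. First, the initial threshold is obtained there perturbatively from the reference sphere, using $\langle\partial_r,N_{v,s}\rangle=s_K(s)/s_K(R_0)$ on $S^K_{R_0}(o)\cap P_{v,s}$ and compactness. No explicit $\omega_K$, Jacobi fields or conformal models are needed. Second, the radial annulus along the flow is centred at the fixed point $o$, not a moving centre. It is obtained by a continuity argument keeping $r>R_0/2$ (and $r<\pi/2$ if $K=1$), so that the gradient bound from reflection applies. Your spectral-gap route to exponential convergence is a legitimate alternative to the paper's approach, which uses the h-convex convergence theorem in $\mathbb H^{n+1}$ and pinching and energy arguments in $\mathbb S^{n+1}$.
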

	
	The use of the initial assumptions in the reflection technique requires a different argument in the non-flat case, and the estimates are not as direct and explicit as in Euclidean space. This leads to the different formulation of the displacement estimate at the end of the statement.

	The geometric assumptions in the above results involve only the radial position and the radial tilt of the initial hypersurface, and therefore remain meaningful for Lipschitz radial graphs. By combining smooth approximation with uniform curvature estimates, we extend the stability result to this rough-data setting as follows.
	
	\begin{thm}[Lipschitz initial hypersurfaces]
		\label{thm:intro-lipschitz}
		Let  $1 \leq n \in \mathbb N$, \(K\in\{-1,0,1\}\), and 
		$
		M_0\subset N_K^{n+1}
		$
		be a closed embedded Lipschitz hypersurface that satisfies the geometric assumptions of Theorem~\ref{thm:main} or Theorem~\ref{space-form-main},  with the normal-oscillation condition understood almost everywhere.	Then there exists an instantaneously smooth volume-preserving mean
		curvature flow
		$
		(M_t)_{t>0},
		$
		with initial condition $M_0$ in the following continuous sense: writing $M_t$ and $M_0$ as radial graphs with respect to the reference point $o$, with radial functions $\rho(\cdot,t)$ and $\rho_0$, respectively, one has \[ \rho(\cdot,t)\longrightarrow\rho_0 \qquad\text{uniformly as }t\searrow0. \] In particular, $M_t$ converges to $M_0$ in the Hausdorff sense and encloses the same volume as $M_0$. The flow has the same eventual convexity and exponential convergence properties as in the corresponding smooth theorems.
	\end{thm}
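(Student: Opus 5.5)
We approximate the Lipschitz datum by smooth ones, run the smooth flows, and pass to the limit using estimates that depend only on the first-order data. Write $M_0$ as a radial graph $\rho_0$ over $\mathbb S^n$ around $o$. The annular condition and the a.e. angle condition are equivalent to bounds on $\rho_0$ and on $|\nabla\rho_0|/\rho_0$ almost everywhere. Mollify $\rho_0$ on the sphere, which is convolution in the $\mathbb S^n$ variable. This gives smooth radial functions $\rho_0^j\to\rho_0$ uniformly whose hypersurfaces $M_0^j$ satisfy \eqref{Haus}--\eqref{nor-osc}, or \eqref{eq:space-form-initial-annulus}--\eqref{eq:space-form-initial-angle}, with parameters $\varepsilon_0^j\to\varepsilon_0$ and $\theta_0^j\le\theta_0+o(1)$. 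This works because mollification preserves sup bounds of $\rho$ and, up to an error vanishing with the mollification scale, sup bounds of the tilt. We then slightly rescale or dilate so that the enclosed volume, and hence $R_0$, matches exactly. If the data are strictly inside the smallness threshold $\delta_*$, the smooth theorems (Theorem~\ref{thm:main} or Theorem~\ref{space-form-main}) give global flows $M_t^j$.

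The key step, and the main obstacle, is a curvature estimate for $t>0$ that is uniform in $j$. The bounds in part (b) of the smooth theorems may depend on initial curvature, so they cannot be used directly. The reflection argument, however, gives uniform star-shapedness: $\langle\nu,\partial_r\rangle\ge c>0$ and $\rho$ stays in the annulus for all $t$, with constants depending only on $\varepsilon_0,\theta_0$. I would use these first-order bounds in an Ecker--Huisken type interior estimate adapted to the nonlocal term. First, bound $h(t)$ on short time intervals. For this I would use the uniform star-shapedness: $h|M_t|=\int H$ can be bounded via the Minkowski formula $\int H\langle X,\nu\rangle$ type identity and area bounds, the latter coming from the monotonicity of area under the flow. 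With $h$ controlled, consider $\varphi=t\,|A|^2/(\langle\nu,\partial_r\rangle-c/2)^2$, or $tH^2$-type quantities with the support function $u=\langle V,\nu\rangle$ for the conformal radial field $V$. We compute its evolution, absorbing the $h$ terms and the ambient curvature terms when $K\ne0$, and apply the maximum principle. This should yield $|A|^2\le C(1+1/t)$ on $(0,T]$ with $C$ independent of $j$. Higher derivative bounds $|\nabla^kA|^2\le C_k(1+t^{-k-1})$ then follow by the standard Ecker--Huisken/Hamilton interpolation. Alternatively, one can write the flow as a uniformly parabolic quasilinear equation for $\rho$ with gradient bounded, with the nonlocal term $h(t)$ only time dependent and bounded. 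Then Krylov--Safonov/De Giorgi Hölder estimates for $\nabla\rho$ followed by Schauder theory give the same smoothing. I would keep this route as a fallback if the maximum principle computation does not close.

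Finally, Arzelà--Ascoli gives a subsequence $\rho^j\to\rho$ in $C^\infty_{loc}(\mathbb S^n\times(0,\infty))$, so the limit is a smooth volume-preserving flow, with $h$ passing to the limit by smooth convergence. For the initial condition, a uniform modulus $|\rho^j(\cdot,t)-\rho_0^j|\le\omega(t)$ is needed. This follows from the speed bound $|H-h|\le C t^{-1/2}$, which integrates to $O(t^{1/2})$, or from comparison with the barriers available in the reflection argument. Combining it with $\rho_0^j\to\rho_0$ gives uniform convergence $\rho(\cdot,t)\to\rho_0$, Hausdorff convergence, and preservation of volume, since volume is continuous under uniform convergence of radial graphs. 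For eventual convexity and exponential convergence, apply the smooth theorem to the smooth hypersurface $M_{t_0}$ for a small $t_0>0$. The reflection property is inherited in the limit, so $M_{t_0}$ satisfies the hypotheses with parameters near the original ones, and uniqueness of the smooth flow from $M_{t_0}$ gives the conclusions.
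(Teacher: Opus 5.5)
Your outer architecture matches the paper's: smooth approximation, uniform radial and tilt control from a common reflection threshold, a positive-time curvature bound independent of $j$, compactness, the $t^{-1/2}$ speed bound for the initial trace, and restarting the smooth theory at $M_{t_0}$. The gap is in the step you single out as key. Your plan to first bound $h(t)$ using star-shapedness, the Minkowski identity and area bounds does not work.

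With $|u-R_0|\le\eta_0$, the identity $\int Hu\,d\mu=n|M_t|$ only gives $|h-n/R_0|\le(\eta_0/R_0)\max_{M_t}|H|$, and this still involves the curvature. Moreover, $\int_M H\,d\mu$ is not controlled by $C^1$ data at all. Integrating by parts on a radial graph leaves a Hessian term of the graph function. For example, small-slope wrinkles of wavelength $\lambda$ built from a resonant triad of Fourier modes contribute a term of order $\varepsilon^3/\lambda$ at slope $\varepsilon$.

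So in general $h^{(j)}(0)$, and hence $\sup_{(0,\tau]}|h^{(j)}|$, is not bounded uniformly in $j$. Only $|h|\le C(1+t^{-1/2})$ comes out, and only at the end. Consequently the term $-2h\operatorname{tr}(A^3)\varphi$ in the evolution of any weighted $|A|^2$ is genuinely quartic, of the same order as the good term. It cannot be absorbed by Young's inequality as in the smooth case. Your Krylov--Safonov/Schauder fallback for the radial equation assumes the same bounded $h$.

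What is missing is to bound $h$ by the quantity being estimated and absorb it using the small tilt. The paper takes $\mathscr F=|A|^2\varphi(v^2)$ with $\varphi(s)=s/(1-ks)$, and writes $\mathscr G=\max\mathscr F$. It uses $|h|\le\max|H|\le\sqrt n\max|A|\le\sqrt{n(1-k)}\,\mathscr G^{1/2}$. At a maximum this gives $2|h|\,|\operatorname{tr}(A^3)|\varphi\le 2\sqrt n(1-k)\mathscr G^2$, which is dominated by $-2k\mathscr G^2$ exactly when $k>\sqrt n/(1+\sqrt n)$. Compatibility with $kv^2<1$ then requires $q_*^2>\sqrt n/(1+\sqrt n)$, and this is precisely where the smallness of $\theta_0$ is used. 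The result is $D^+\mathscr G\le-\mu\mathscr G^2+C(\mathscr G+1)$, hence $\mathscr G\le\max\{M_*,2/(\mu t)\}$ independently of $j$. This is what yields the bound $|H|+|h|\le C(1+t^{-1/2})$ that your initial-trace step relies on.

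Your specific weight $(q-c/2)^{-2}$, with $q=\langle\nu,\partial_r\rangle\ge c$, would not close with this bound. Its good quartic coefficient is $c(q-c/2)$, while the $h$-term produces $2\sqrt n(1-c/2)(q-c/2)$. Absorption would require $c/2>\sqrt n/(1+\sqrt n)\ge 1/2$, which is impossible. You have two options:
\begin{itemize}
\item use an offset $\beta\in(\sqrt n/(1+\sqrt n),c)$, which again needs small tilt;
\item keep the Minkowski estimate in the form $|h|\le n/R_0+(\eta_0/R_0)\sqrt n\max|A|$, whose quartic contribution has the small coefficient of order $\eta_0/R_0$, uniform in $j$ because of the common reflection threshold.
\end{itemize}
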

	
	\vspace*{-0.2cm}
	
	\subsection*{Relation to previous work and main novelty}
	
    There is a rich literature on convergence for the flow \eqref{vp-mcf}.
	Huisken \cite{Hu87} proved global existence and convergence to a round
	sphere for convex hypersurfaces in $\mathbb{R}^{n+1}$, $n\geq 2$, while
	the corresponding result for curves is due to Gage \cite{Gage1986}. A
	different line of research, initiated by Escher and Simonett
	\cite{EsSi98}, replaces convexity by perturbative closeness to a sphere in
	the little Hölder space $h^{1+\alpha}$, using centre-manifold and
	maximal-regularity methods. Similar stability results were obtained under Sobolev smallness assumptions
	in \cite{AntonopoulouKaraliSigal}, under integral curvature pinching in
	\cite{LiVPMCF}, and, for strictly stable critical sets in flat tori, under
	$C^{1,1}$-small perturbations in \cite{ddkk}.  
	
	Related results in nonflat space forms rely on stronger geometric or functional hypotheses. In hyperbolic space, the first author and Miquel \cite{CabezasRivasMiquel} proved convergence from horospherically convex hypersurfaces and from $C^{1, \alpha}$-perturbations of geodesic
	spheres, while subsequent works \cite{HuangLinZhang, XuLZ, LiXu} considered
	$L^2$-almost umbilical initial hypersurfaces. Hartley \cite{Hartley} treated constrained
	curvature flows near geodesic spheres in \(\mathbb S^{n+1}\), under
	smallness assumptions in spaces  \(h^{2+\alpha}\). For curves on pinched Hadamard surfaces (i.e. allowing some variable negative ambient curvature),
		\cite[Theorem~1.3]{AlbertNiclosCabezasRivas} established stability of
		geodesic circles under $h^{1+\beta}$-small perturbations.

	All  the aforementioned results either impose bounds on the curvature, in a pointwise or integral sense, or consider perturbations of a sphere in a functional norm which implies at least $C^{1,\alpha}$ smallness.  Our hypotheses instead  control only the radial position of the
		hypersurface and the pointwise angle of its normal with the radial
		direction. In particular, they impose no direct bound on the second
		fundamental form and no Hölder control of the normal, whose spatial
		variation may be arbitrarily rapid.
		The first-order nature of our assumptions is essential. Mere $C^0$-closeness does not prevent singularities: one may attach to a sphere an arbitrarily small bubble through a sufficiently thin neck, producing a hypersurface arbitrarily close to the sphere in $C^0$ whose flow develops a neckpinch.

	Our approach is based on quantitative Alexandrov reflection, in the form developed for geometric flows by Chow and Gulliver \cite{Chow98,Chow23}. Reflection has been used mainly for expanding flows and, more recently, for constrained flows in Euclidean and hyperbolic space, in combination with convexity or related positive-curvature assumptions; see \cite{BourniEspinarMishra,Harvie,AndrewsWei21,ACW21,AndrewsWei18, wei_yang_VPF_Hyperbolic}. Here, by contrast, quantitative reflection and geometric $C^1$-closeness determine each other, up to a change of parameters. The preserved reflection threshold therefore controls both the first-order geometry and the displacement of the limiting centre from the prescribed reference centre.
	
        For more general ambient manifolds, convergence to
		constant-mean-curvature hypersurfaces has been established for suitably
		$C^2$-round or non-degenerate perturbations of small geodesic spheres
		\cite{HuiskenYau,SinestrariTenan,AlFr03,MiglioranzaThesis}. In this
		generality, reflection symmetries may be unavailable and closed
		constant-mean-curvature hypersurfaces may be unstable. The stability
		studied here is therefore tied to the geometry of the ambient space.

	\subsection*{Strategy of the proof}

 The fundamental step is to establish a quantitative relation  between our geometric assumptions and a strong Alexandrov reflection property. Proposition~\ref{reflect} shows that
	the initial annular-closeness and normal-oscillation assumptions imply reflection across hyperplanes in every direction up to a common threshold,  located at a small distance from the center of the reference sphere.
	Conversely, Corollary~\ref{cor:reflection-C1-equivalence} shows that, within the
	volume-normalized class, such a threshold controls both the radial
	oscillation and the tilt of the hypersurface.  Thus, up to a quantitative change of parameters, strong reflection and geometric $C^{1}$-closeness determine each other.
	
	The decisive advantage of the reflection formulation is its invariance under the flow. By the reflection principle of Chow and Gulliver \cite{Chow98}, the initial reflection property is preserved for all positive times. Consequently, the evolving hypersurface remains in a fixed annular region and its normal remains uniformly close to the radial direction. These preserved first-order bounds provide the geometric input needed to recover the missing curvature estimates. 
	
	More precisely, the radial and angular estimates give a two-sided bound for the support function, allowing us to adapt Tso's maximum-principle argument \cite{Tso} to control the mean curvature. This step is more delicate than in the standard convex setting, since neither $H$ nor its spatial average $h$ has a prescribed sign.
	The full second fundamental form is then controlled in
	Proposition~\ref{shape-bdd}  by applying the maximum principle to a weighted quantity as in Ecker-Huisken's interior estimates  \cite{EckerHuiskenInterior}.  Since the tilt is measured relative to the spatially varying radial direction rather than to a fixed ambient vector, its evolution produces additional radial terms. The radial and angular bounds provided by Alexandrov reflection allow us to control these contributions.
	
	Global existence is then obtained by parabolic regularity and a continuation argument.
	The area dissipation implies smooth subsequential convergence to a closed
	constant-mean-curvature hypersurface, which is a sphere by Alexandrov's
	theorem; volume preservation determines its radius.  A sufficiently late time
	slice is therefore uniformly convex, allowing the flow to enter the
	classical stability regime and giving exponential convergence to a sphere.
	Finally, the preserved reflection property passes to the limiting ball
	and yields the quantitative estimate for its centre.

	The same overall strategy applies to nonflat space forms, with the Euclidean position vector replaced by the radial conformal vector field $s_K(r)\partial_r.$  The initial reflection threshold, however, must be estimated by a different argument. We first analyse it quantitatively for an off-centred geodesic sphere and then extend the estimate to hypersurfaces satisfying the annular and normal-oscillation assumptions by a perturbative continuity argument. This yields the radial and normal estimates needed for the curvature argument. The evolution equations now contain additional ambient-curvature terms, whose control uses the same preserved first-order geometry but requires separate estimates.

Although the reflection trick is common to the three space forms, the late-time analysis is not. In hyperbolic space, smooth subsequential convergence places the flow in the strictly horospherically convex regime, and the conclusion follows from the convergence theorem of \cite{CabezasRivasMiquel}. In the spherical case, convexity alone does not provide an analogous route to convergence. We therefore develop a separate self-contained bootstrap. For $n\geq 2$, it is based on the evolution of the curvature pinching and the favourable sign of the relevant ambient-curvature contribution near a geodesic sphere; for curves, the corresponding conclusion is obtained by an energy argument.

 The passage to Lipschitz initial data requires a separate curvature argument. Smooth volume-normalized approximations retain the same uniform first-order geometric bounds, but the mean-curvature estimate used in the smooth theory is not uniform because its constant depends on the initial curvature. We therefore bypass that estimate and apply a modified weighted maximum-principle argument directly to the full second fundamental form. The resulting positive-time curvature estimates are uniform along the approximating sequence and yield smooth compactness away from the initial time. The integrability of the radial velocity near $t=0$ then shows  that the limit flow converges to the prescribed initial hypersurface and encloses the same volume. The argument of the smooth case then gives the same exponential convergence to a sphere.

\vspace*{-0.2cm}
	
	\subsection*{Organization of the paper}  
	Section~\ref{notation} collects the evolution equations and introduces the Alexandrov reflection framework. Section~\ref{C1control} establishes the quantitative correspondence between the Euclidean reflection property and the radial and normal estimates, and shows that the latter are preserved along the flow. Section~\ref{sec:curvature-control} derives the bounds for the mean curvature and the full second fundamental form. Section~\ref{LTEconv} proves long-time existence, eventual convexity, smooth exponential convergence, and the quantitative estimate for the centre of the limiting sphere in Euclidean space. Section~\ref{sec:space-forms} extends the argument to hyperbolic and spherical space forms, including the ambient-curvature terms and the separate late-time analysis required in each setting. Finally, Section~\ref{sec:lip-euclidean} treats Lipschitz initial hypersurfaces by smooth
	approximation and establishes the uniform positive-time estimates required
	for instantaneous regularization and the subsequent long-time
	behaviour.

	\section{Notation, conventions and preliminary results} \label{notation}
	
	Throughout the paper, \(M_t\) is a time-dependent closed embedded hypersurface in a space form, 
	\(g=(g_{ij})\) denotes the induced metric on
	\(M_t\), \(\nabla\) its Levi-Civita connection, and \(\Delta\) the
	Laplace--Beltrami operator. We denote by \(A=(h_i^j)\) the Weingarten
	operator with respect to the outward unit normal \(\nu\), and by
	$
	H=\operatorname{tr}_g A
	$
	the mean curvature. Our sign convention is such that the principal
	curvatures of the outward-oriented sphere \(\partial B_R(p)\) are all positive.

	Indices are raised and lowered using \(g\), and repeated indices above and below are
	summed. We write
	\[
	|A|^2=h_i^jh_j^i,
	\qquad
	\operatorname{tr}(A^3)
	=
	h_i^jh_j^kh_k^i.
	\]
	The notation \(S*T\) will denote a linear combination of contractions
	of the tensors \(S\) and \(T\) with respect to \(g\). Constants denoted
	by \(C\) may change from line to line and depend only on the fixed
	parameters and the initial hypersurface, unless otherwise stated.
	
	In the first part of the paper, we consider the flow in a Euclidean ambient space. 
	We collect the evolution equations which we will later use in the curvature
	estimates. All geometric quantities below are computed on \(M_t\), and
	the time derivative is taken in the parametrization determined by
	\eqref{vp-mcf}.
	\begin{lemma}[Evolution equations] Under the {\sc vp-mcf} \eqref{vp-mcf} in $\mathbb R^{n+1}$, the following holds:
		\begin{enumerate}
			\item[{\rm (a)}] $(\partial_t-\Delta)H=(H-h)|A|^2$. \smallskip
			\item[{\rm (b)}]
			The support function $u=\langle X,\nu\rangle$ satisfies
			$(\partial_t-\Delta)u
			=
			h-2H+|A|^2u$. \smallskip
			
			\item[{\rm (c)}] $(\partial_t-\Delta)|A|^2
			=
			-2|\nabla A|^2+2|A|^4
			-2h \operatorname{tr}(A^3).$ 
			
		\end{enumerate} \label{evol-eq}
	\end{lemma}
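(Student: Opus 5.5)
The plan is to derive all three identities from the standard first-variation formulas for a normal flow $\partial_t X=f\nu$, with speed $f=h-H$, and then combine them with Simons' identity. I take the sign conventions of the paper: $\nu$ is outward and $h_{ij}=\langle \partial_i\nu,\partial_j X\rangle$, so spheres have positive curvature. Under a normal variation of speed $f$ one has
\[
\partial_t g_{ij}=2fh_{ij},\qquad \partial_t\nu=-\nabla f,\qquad \partial_t h_{ij}=-\nabla_i\nabla_j f+f\,h_{ik}h^k_j,
\]
and consequently $\partial_t h_i^j=-\nabla_i\nabla^j f-f\,h_i^kh_k^j$ and $\partial_t H=-\Delta f-f|A|^2$. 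The signs here are fixed by checking against a shrinking sphere: with $f=-H$, the curvature $1/R$ must increase. Since $h=h(t)$ is spatially constant, $\nabla f=-\nabla H$ and $\Delta f=-\Delta H$.

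For (a), inserting $f=h-H$ gives
\[
\partial_t H=\Delta H-(h-H)|A|^2=\Delta H+(H-h)|A|^2,
\]
which is the claimed identity.

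For (b), I differentiate $u=\langle X,\nu\rangle$ directly:
\[
\partial_t u=\langle f\nu,\nu\rangle+\langle X,-\nabla f\rangle=f+\langle X,\nabla H\rangle .
\]
For the spatial part, the Gauss–Weingarten equations give $\nabla_i u=h_i^k\langle X,\partial_k X\rangle$. Differentiating once more and using Codazzi and $\nabla_i\nabla_j X=-h_{ij}\nu$ yields
\[
\Delta u=\langle X,\nabla H\rangle+H-|A|^2u.
\]
Subtracting, $(\partial_t-\Delta)u=f-H+|A|^2u=h-2H+|A|^2u$, as stated.

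For (c), I first compute
\[
\partial_t|A|^2=2h_j^i\,\partial_t h_i^j=2h^{ij}\nabla_i\nabla_j H-2(h-H)\operatorname{tr}(A^3).
\]
Then I use Simons' identity, contracted with $h^{ij}$:
\[
h^{ij}\nabla_i\nabla_j H=h^{ij}\Delta h_{ij}-H\operatorname{tr}(A^3)+|A|^4,
\]
together with $\Delta|A|^2=2h^{ij}\Delta h_{ij}+2|\nabla A|^2$. The $2H\operatorname{tr}(A^3)$ terms cancel, and what remains is $-2|\nabla A|^2+2|A|^4-2h\operatorname{tr}(A^3)$.

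The main hazard is sign bookkeeping. The outward normal convention flips signs relative to many references, and in (b) the term $-|A|^2u$ in $\Delta u$ must carry the correct sign. I will settle each sign by checking it on a round sphere, where $u=R$, $H=n/R$ and $|A|^2=n/R^2$.
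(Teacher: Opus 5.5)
Your proposal is correct. The first-variation formulas, the computation of $\Delta u$ via Codazzi and $\nabla_i\nabla_j X=-h_{ij}\nu$, and the contraction of Simons' identity all carry the right signs for the outward-normal convention, and the round-sphere checks confirm them. The paper states this lemma without proof and relies on exactly this standard derivation from Huisken's work on the volume-preserving flow. As in your argument, the only extra ingredient is that $h(t)$ is spatially constant, so it drops out of every spatial derivative of the speed $f=h-H$.
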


	 Following \cite{Chow98}, we introduce the terminology for reflection of a closed hypersurface
		across a hyperplane.
	
	\begin{defn}[Reflection across a hyperplane]\label{def:reflection}
		Let \(M\subset\mathbb R^{n+1}\) be a smooth, closed, embedded hypersurface
		bounding an open domain \(\Omega\), and let
		\[
		\Pi=\Pi_{\lambda,e}
		:=
		\left\{
		x\in\mathbb R^{n+1}:
		\langle x,e\rangle=\lambda
		\right\},
		\]
		for given \(e\in\mathbb S^n\) and \(\lambda\in\mathbb R\). We denote the two
		open half-spaces determined by \(\Pi\) by
		\[
		\Pi^+
		:=
		\left\{
		x\in\mathbb R^{n+1}:
		\langle x,e\rangle>\lambda
		\right\},
		\qquad
		\Pi^-
		:=
		\left\{
		x\in\mathbb R^{n+1}:
		\langle x,e\rangle<\lambda
		\right\}.
		\]
		\begin{enumerate}	
			\item[(a)]	The reflection across \(\Pi\) is the Euclidean isometry
			\[
			\mathcal R_{\Pi}(x)
			=
			x-2\bigl(\langle x,e\rangle-\lambda\bigr)e.
			\]
			In particular, $\mathcal R_{\Pi}(\Pi^+)=\Pi^-$, $\mathcal R_{\Pi}(\Pi^-)=\Pi^+$,
			and \(\mathcal R_{\Pi}\) fixes every point of \(\Pi\).
			
			\item[(b)] \(M\) \emph{reflects across \(\Pi\) in the direction \(e\)} if
			the reflection of the portion of \(\Omega\) lying in \(\Pi^+\) is contained
			in \(\Omega\), namely
			\begin{equation}\label{eq:reflection-domain-inclusion}
			\mathcal R_{\Pi}
			\bigl(
			\Omega\cap\Pi^+
			\bigr)
			\subset
			\Omega\cap\Pi^-.
			\end{equation}
			Equivalently, the reflected cap $\mathcal R_{\Pi}
			\bigl(
			M\cap\Pi^+
			\bigr)$ 
			does not cross \(M\) and lies in \(\overline{\Omega}\cap\Pi^-\). 
			
			\item[(c)]	 \(M\) \emph{reflects strictly across \(\Pi\)} if
			\[
			\mathcal R_{\Pi}
			\bigl(
			M\cap\Pi^+
			\bigr)
			\subset
			\Omega\cap\Pi^-
			\]
			and if the direction \(e\) is not
			tangent to \(M\) at any point of \(M\cap\Pi\).

			\item[(d)] \(M\)
			\emph{reflects strictly up to \(\Pi_{\lambda,e}\)} if it reflects strictly across every
			parallel hyperplane $\Pi_{\mu,e}$ with $\mu\ge\lambda$.
		\end{enumerate}
	\end{defn}

	In the present paper, the hyperplanes \(\Pi_{\lambda,e}\)  are always  taken with \( \lambda>0 \) and therefore
	\(\Pi^+\) is
	the side of the reflecting hyperplane which is farther from the origin.
	
	Reflection across a hyperplane is an ambient isometry and therefore
	preserves the local geometric terms in the evolution equation. Moreover, the original and reflected hypersurfaces carry the same spatially constant term $h(t)$, which cancels when their local graph equations are compared. Hence the interior and boundary tangency arguments in  \cite[Theorem 2.2]{Chow98}  apply without change, and we obtain the following result.
	\begin{thm}[Preservation of the reflection property; cf.~Chow--Gulliver]\label{thm:CG} 
		Let \(M_t\subset\mathbb R^{n+1}\) be a solution of the volume-preserving mean curvature flow \eqref{vp-mcf}, for $t \in [0,T)$. If \(M_0\) reflects strictly across (resp. up to) \(\Pi_{\lambda,e}\) for some 
		\(e\in\mathbb S^n\) and \(\lambda\in\mathbb R\), then \(M_t\) also reflects strictly across (resp. up to) \(\Pi_{\lambda,e}\), for all $t \in [0,T)$.
	\end{thm}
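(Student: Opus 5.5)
The plan is to run the Chow--Gulliver maximum-principle argument for the reflected hypersurface. First, the ``across'' statement; the ``up to'' statement follows by applying it to each parallel hyperplane \(\Pi_{\mu,e}\) with \(\mu\ge\lambda\). Write \(\Pi=\Pi_{\lambda,e}\) and let \(\tilde M_t=\mathcal R_\Pi(M_t\cap\Pi^+)\) be the reflected cap.

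Since \(\mathcal R_\Pi\) is an isometry, the reflected cap \(\tilde M_t\) moves with normal speed \(h(t)-\tilde H\). Here \(h(t)\) is the \emph{same} averaged mean curvature as for \(M_t\), because it is a global quantity of \(M_t\). Define
\[
T^*=\sup\{\,t'\in[0,T): M_t \text{ reflects strictly across } \Pi \text{ for all } t\in[0,t']\,\}.
\]
Strict reflection is an open condition under smooth dependence on time: the cap lies in the open set \(\Omega\cap\Pi^-\), and transversality of \(e\) on the compact set \(M\cap\Pi\) persists. Hence \(T^*>0\).

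Suppose \(T^*<T\). Strict reflection is also preserved under limits unless one of two contact situations occurs at \(t=T^*\).

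\textbf{Interior touching.} An interior point of \(\tilde M_{T^*}\) touches \(M_{T^*}\) in \(\Pi^-\). Locally, write both surfaces as graphs \(u\le v\) over a common tangent plane, ordered as \(\tilde M\) lies inside. Both satisfy \(\partial_t w=\sqrt{1+|Dw|^2}\,\bigl(h(t)-H[w]\bigr)\), with the sign arranged appropriately. Subtracting, the \(h(t)\) terms combine into \(h(t)\bigl(\sqrt{1+|Du|^2}-\sqrt{1+|Dv|^2}\bigr)\). This is a first-order term linear in \(D(u-v)\) with bounded coefficients, so \(u-v\) satisfies a linear uniformly parabolic equation. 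The strong maximum principle then forces \(u\equiv v\) near the contact point for earlier times. Connectedness then gives that \(\tilde M\) coincides with a component of \(M\cap\Pi^-\), i.e.\ \(M\) is symmetric about \(\Pi\). This contradicts strictness at earlier times, when the cap was strictly inside.

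\textbf{Boundary touching.} Either \(e\) becomes tangent to \(M_{T^*}\) at some point of \(M_{T^*}\cap\Pi\), or the boundaries meet tangentially. Here we use the Hopf boundary-point lemma for the same linear parabolic equation on the half-space side. Near such a point, the two graphs agree on \(\Pi\) and \(u-v\le0\). Then \(e\) being tangent means the normal derivative of \(u-v\) vanishes at a boundary minimum, which Hopf forbids unless \(u\equiv v\). This again yields symmetry and the same contradiction.

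The main obstacle is precisely the only non-local ingredient, the term \(h(t)\). I would stress that it is spatially constant and identical for the two compared surfaces, so that it contributes only a first-order linear term and never a zeroth-order forcing. Once this is checked, the proof is exactly \cite[Theorem 2.2]{Chow98}. Short-time smoothness ensures all coefficients are bounded on compact subintervals of \([0,T)\), so the parabolic maximum principle and Hopf lemma apply. Hence \(T^*=T\).
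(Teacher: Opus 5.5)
Your proposal is correct and follows the paper's route. The paper proves this only by remarking that the reflection is an ambient isometry and that the original and reflected hypersurfaces share the spatially constant term $h(t)$, so the interior and boundary tangency arguments of \cite[Theorem 2.2]{Chow98} apply unchanged. Your observation that, in the vertical-graph form, $h(t)$ leaves a bounded first-order term $h(t)\bigl(\sqrt{1+|Du|^2}-\sqrt{1+|Dv|^2}\bigr)$ rather than cancelling outright is a slightly more precise version of the paper's ``cancels'', and it does not affect the strong maximum principle or the Hopf lemma.
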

	
	\section{$C^1$-control along the flow via Alexandrov reflection} \label{C1control}
	
	\subsection{Alexandrov reflection property at the initial time}
	
 We first show that our assumptions on $M_0$ imply the reflection property up to a certain threshold, whose size can be estimated in terms of the constants appearing in \eqref{Haus} and \eqref{nor-osc}.
	
	\begin{prop} \label{reflect}
		Let \(c_0>0\) satisfy
		\begin{equation} \label{size_c0}
		\frac{2c_0}{R_0}
		>
		\Phi(\varepsilon_0,\theta_0)
		:=
		(1+\varepsilon_0)\sin\theta_0
		+
		\sqrt{
			(1+\varepsilon_0)^2\sin^2\theta_0
			+
			4\varepsilon_0
		}.
		\textbf{}\end{equation}
		Then \(M_0\) reflects strictly  up to $\Pi=\Pi_{c_0,e}$, for every \(e\in\mathbb S^n\). 
	\end{prop}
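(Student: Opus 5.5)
The plan is to run Alexandrov's moving-plane argument in the parameter \(\mu\). For \(\mu>R_0(1+\varepsilon_0)\) the cap \(M_0\cap\Pi_{\mu,e}^+\) is empty by \eqref{Haus}, so strict reflection holds trivially. We then decrease \(\mu\) towards \(c_0\). Let \(\mu_*\) be the infimum of those \(\mu\) for which \(M_0\) reflects strictly across every \(\Pi_{\mu',e}\) with \(\mu'\ge\mu\). If \(\mu_*\ge c_0\) and strict reflection failed at \(\mu_*\), there are two possible first failures:
\begin{enumerate}
\item[(i)] a boundary tangency, where \(e\) is tangent to \(M_0\) at some point of \(M_0\cap\Pi_{\mu_*,e}\);
\item[(ii)] an interior contact, where \(\mathcal R_{\Pi}(q)=p\in M_0\) for some \(q\in M_0\cap\Pi_{\mu_*,e}^+\).
\end{enumerate}
Strict reflection is an open condition in \(\mu\) as long as neither occurs, so it suffices to exclude both for every \(\mu\ge c_0\). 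Throughout we use that \eqref{nor-osc} makes \(M_0\) a radial graph, \(\Omega_0=\{r\omega:\ 0\le r<\rho(\omega)\}\), with \(R_0(1-\varepsilon_0)\le\rho\le R_0(1+\varepsilon_0)\).

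\emph{Step 1: transversality.} Write \(\nu_0=\cos\alpha\,\hat x+\sin\alpha\,\tau\), where \(\hat x=x/|x|\), \(\tau\perp\hat x\) is a unit vector and \(\alpha\le\theta_0\). Let \(\beta\) be the angle between \(\hat x\) and \(e\); then \(\langle\nu_0,e\rangle\ge\cos(\alpha+\beta)\). Take \(x\in M_0\) with \(\langle x,e\rangle\ge\mu\ge c_0\). Then \(\cos\beta\ge c_0/(R_0(1+\varepsilon_0))\). Since \eqref{size_c0} gives \(c_0/R_0>(1+\varepsilon_0)\sin\theta_0\), we get \(\cos\beta>\sin\theta_0\), that is, \(\beta<\pi/2-\theta_0\). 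Hence \(\langle\nu_0,e\rangle>0\) on \(M_0\cap\overline{\Pi^+_{\mu,e}}\) for all \(\mu\ge c_0\), which excludes (i). Moreover, a line in direction \(-e\) can only leave \(\Omega_0\) through a point where \(\langle\nu_0,e\rangle\le0\). So for \(q\in M_0\cap\Pi^+\), the segment from \(q\) down to \(\Pi\) lies in \(\Omega_0\), and each such line meets \(M_0\cap\Pi^+\) exactly once.

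\emph{Step 2: excluding interior contact.} Suppose \(p=\mathcal R_\Pi(q)\in M_0\), with \(d:=\langle q,e\rangle-\mu>0\). The annulus alone gives
\[
|q|^2-|p|^2=4d\mu\le R_0^2\bigl((1+\varepsilon_0)^2-(1-\varepsilon_0)^2\bigr)=4\varepsilon_0R_0^2 .
\]
We also use the first-order information. The gradient bound \(|\nabla\log\rho|\le\tan\theta_0\) is equivalent to an interior cone condition: for every \(y\in M_0\), the open cone with vertex \(y\), axis \(-\hat y\) and half-opening \(\pi/2-\theta_0\) (truncated at the origin) lies in \(\Omega_0\). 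The point \(p\) lies on the line through \(q\) in direction \(-e\), and by Step 1 the angle between \(-e\) and \(-\hat q\) is less than \(\pi/2-\theta_0\). So the points \(q-se\) stay in \(\Omega_0\) as long as they remain in this cone, and the contact point \(p\) must lie outside it. Projecting onto \(\hat q\) and onto the plane orthogonal to it, and using \(|q|\le R_0(1+\varepsilon_0)\), this should produce the sharper bound
\[
|q|^2-|p|^2\le 4\varepsilon_0R_0^2+4(1+\varepsilon_0)R_0\sin\theta_0\,(\text{a multiple of } d \text{ or } \mu).
\]
Combined with \(|q|^2-|p|^2=4d\mu\), and using \(d\le|q|-\mu\), this should reduce to \(\mu^2\le(1+\varepsilon_0)R_0\sin\theta_0\,\mu+\varepsilon_0R_0^2\). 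By \eqref{size_c0} this fails for every \(\mu\ge c_0\): \(\Phi/2\) is exactly the positive root of \(s^2-(1+\varepsilon_0)\sin\theta_0\,s-\varepsilon_0=0\). If the cone estimate is awkward to apply directly at \(p\), we first try to compare \(p\) with the point of \(M_0\) on the ray through \(p\), using the Lipschitz bound on \(\log\rho\) along the great-circle arc from \(\hat q\) to \(\hat p\).

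The main obstacle is Step 2: arranging the cone, or Lipschitz, estimate so that it produces exactly the quadratic inequality with the constants in \(\Phi\). Step 1 and the continuity argument in \(\mu\) are routine. Having excluded both (i) and (ii) for all \(\mu\ge c_0\), the continuity argument yields strict reflection across every \(\Pi_{\mu,e}\) with \(\mu\ge c_0\). Since \(e\in\mathbb S^n\) was arbitrary, \(M_0\) reflects strictly up to \(\Pi_{c_0,e}\) for every direction.
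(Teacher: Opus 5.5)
\textbf{Gap in Step 2.} Your Step 1 and the moving-plane framing are fine. The gap is Step 2, which is where the proposition actually lives. You leave it at ``should'', and the mechanism you sketch would not work as stated.

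Step 1 uses only the weak consequence \(c_0>(1+\varepsilon_0)R_0\sin\theta_0\) of \eqref{size_c0}. The \(\varepsilon_0\)-part of \(\Phi\) therefore has to enter in Step 2, and the fixed-vertex cone at \(q\) cannot supply it, for three reasons:
\begin{itemize}
\item As you state it, truncated only by the hyperplane through \(0\) orthogonal to \(\hat y\), the cone has half-opening \(\pi/2-\theta_0\) close to \(\pi/2\). It therefore contains points at distance of order \(|y|\cot\theta_0\) from the origin, so it is not contained in \(\Omega_0\).
\item Step 1 shows that \(-e\) lies within angle \(\pi/2-\theta_0\) of \(-\hat q\). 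So the whole ray \(q-se\), \(s>0\), lies inside the cone with vertex \(q\), including \(p\). The cone therefore cannot force \(p\) outside it; only the truncation matters.
\item Combining with \(d\le|q|-\mu\) goes the wrong way. To contradict the annular upper bound \(d\le\varepsilon_0R_0^2/\mu\) you need a lower bound on \(d\).
\end{itemize}

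\textbf{The missing idea.} Transversality must be tested at the lower point \(p\), against \(p\)'s own radial direction. Suppose \(\mathcal R_\Pi(q)\notin\Omega_0\), and let \(p\) be the first point of \(M_0\) on the segment from \(q\) to \(\mathcal R_\Pi(q)\).
\begin{itemize}
\item By Step 1, \(p\) lies strictly below \(\Pi\). It is an exit point in direction \(-e\), so \(\langle\nu_0(p),e\rangle\le0\).
\item \(\mathcal R_\Pi(p)\) lies on the segment between \(\Pi\) and \(q\), so \(|\mathcal R_\Pi(p)|\le|q|\le R_0(1+\varepsilon_0)\).
\item Your annulus computation, applied to the pair \((\mathcal R_\Pi(p),p)\), gives \(\langle p,e\rangle\ge\mu-\varepsilon_0R_0^2/\mu\ge c_0-\varepsilon_0R_0^2/c_0\). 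The second inequality holds because \(\mu\mapsto\mu-\varepsilon_0R_0^2/\mu\) is increasing.
\item Condition \eqref{size_c0} is exactly \(c_0-\varepsilon_0R_0^2/c_0>R_0(1+\varepsilon_0)\sin\theta_0\), which is your quadratic. Hence \(\langle p,e\rangle/|p|>\sin\theta_0\).
\item The Step 1 estimate at \(p\) then gives \(\langle\nu_0(p),e\rangle>0\), a contradiction.
\end{itemize}

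\textbf{Relation to the paper.} This is the paper's proof. It uses the annulus to confine possible contact points to \(\{\langle x,e\rangle\ge c_1\}\), with \(c_1=c_0-\varepsilon_0R_0^2/c_0\). It then extends the positivity of \(\langle\nu_0,e\rangle\) down to that level, and deduces that this portion of \(M_0\) is a graph in direction \(e\). The paper argues directly for each \(\mu\ge c_0\), since \eqref{size_c0} is inherited by larger thresholds. So your continuity argument in \(\mu\) is harmless but unnecessary.
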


	\begin{proof}
		After a rotation we may assume that
		\[
		\Pi=\{x\in \mathbb R^{n+1}:\langle x,e_{n+1}\rangle=c_0\}.
		\]
		The goal is to show that the part of \(M_0\) lying above the hyperplane \(\Pi\) reflects into the
		domain enclosed by \(M_0\).
		
		It is convenient to write points $x \in \mathbb R^{n+1}$ as $x=(\bar x,x_{n+1})  \in \mathbb R^n \times \mathbb R$. So let us take any \(x=(\bar x,x_{n+1})\in M_0\) with \(x_{n+1} > c_0\), and denote by
		\[
		\widetilde x=(\bar x,2c_0-x_{n+1})
		\]
		its reflection across \(\Pi\). Suppose, in order to get a contradiction, that
		\(\widetilde x\in M_0\). By the Hausdorff closeness assumption \eqref{Haus}, we can write
		\[
		R_0^2(1-\varepsilon_0)^2
		\le |\widetilde x|^2
		=|x|^2-4c_0(x_{n+1}-c_0)
		\le R_0^2(1+\varepsilon_0)^2-4c_0(x_{n+1}-c_0).
		\]
		Hence
		\[
		4c_0(x_{n+1}-c_0)
		\le R_0^2\bigl((1+\varepsilon_0)^2-(1-\varepsilon_0)^2\bigr)
		=4\varepsilon_0 R_0^2,
		\]
		and therefore
		\[
		c_0\le x_{n+1}\le c_0+\frac{\varepsilon_0R_0^2}{c_0}.
		\]
		Consequently, $\widetilde x_{n+1}=2c_0-x_{n+1}$ satisfies
		\[
		c_1:=c_0-\frac{\varepsilon_0R_0^2}{c_0}
		\le \widetilde x_{n+1}\le c_0.
		\]
		Hence, an intersection between \(M_0\) and its reflection can only occur at points with height at least $c_1$. Observe that 	our assumption on $c_0$ implies $2 c_0 > R_0 \sqrt{4 \varepsilon_0}$, hence $c_1>0$. 
		
		We want to show that \(M_0\cap\{x_{n+1}\ge c_1\}\) is a graph over the hyperplane
		\(\{x_{n+1}=0\}\). Let \(z\in M_0\cap\{x_{n+1}\ge c_1\}\), and denote by \(\xi= z/|z|\) the radial direction. Since assumption \eqref{nor-osc} ensures that
		\[
		\left\langle \nu(z),\xi\right\rangle\ge \cos\theta_0,
		\]
		the angle between \(\nu(z)\) and \(\xi\) is at most
		\(\theta_0\). Now we can write
		\[
		\nu(z)=\cos\alpha\,\xi+\sin\alpha\,\eta,
		\]
		where \(\eta\) is a unit vector orthogonal to \(\xi\), and
		\(\alpha\in[0,\theta_0]\) denotes the angle
		between \(\nu(z)\) and \(\xi\). 
		Decomposing \(e_{n+1}\) into its components parallel and orthogonal to
		\(\xi\), we have
		\[
		e_{n+1}
		=
		\frac{z_{n+1}}{|z|}\xi
		+
		\sqrt{1-\frac{z_{n+1}^2}{|z|^2}}\,\xi^\perp,
		\]
		for some unit vector \(\xi^\perp\perp\xi\). Using Cauchy-Schwarz, we deduce
		\[
		\begin{aligned}
		\langle\nu(z),e_{n+1}\rangle
		\ge
		\frac{z_{n+1}}{|z|}\cos\alpha
		-
		\sqrt{1-\frac{z_{n+1}^2}{|z|^2}}\sin\alpha.
		\end{aligned}
		\]
		Since \(z_{n+1}>0\), the right-hand side is decreasing as a function of
		\(\alpha\in[0,\theta_0]\). Hence
		\begin{equation}\label{graph}
		\langle\nu(z),e_{n+1}\rangle
		\ge
		\frac{z_{n+1}}{|z|}\cos\theta_0
		-
		\sqrt{1-\frac{z_{n+1}^2}{|z|^2}}\sin\theta_0.
		\end{equation}
		The right-hand side is positive provided
		\[
		\frac{z_{n+1}}{|z|}>\sin\theta_0.
		\]
		Using \(z_{n+1}\ge c_1\) and \(|z|\le R_0(1+\varepsilon_0)\), and recalling the definition of $c_1$,
		we see from \eqref{graph} that $\langle\nu(z),e_{n+1}\rangle>0$ provided
		\begin{equation}\label{desired}
		c_0-\dfrac{\varepsilon_0R_0^2}{c_0}
		>
		{R_0(1+\varepsilon_0)}
		\sin\theta_0.
		\end{equation}
		Setting $y=\frac{c_0}{R_0}$, 
		we may rewrite \eqref{desired} as $y^2
		-
		(1+\varepsilon_0)\sin\theta_0\,y
		-
		\varepsilon_0
		>
		0.$ 
		The positive root of the corresponding quadratic polynomial is
		\[
		\frac{
			(1+\varepsilon_0)\sin\theta_0
			+
			\sqrt{
				(1+\varepsilon_0)^2\sin^2\theta_0
				+
				4\varepsilon_0
			}
		}{2}.
		\]
		Therefore \eqref{desired} follows from the assumed lower bound
		on \(c_0\). We have proved that
		\begin{equation}\label{graph2}
		\langle \nu(z),e_{n+1}\rangle>0
		\qquad\text{on } \qquad M_0\cap\{x_{n+1}\ge c_1\}.
		\end{equation}
		
		Now we claim that \eqref{graph2} implies
		that
		$M_0\cap\{x_{n+1}\ge c_1\}$
		is a graph over the horizontal hyperplane. Indeed, the condition $\langle\nu,e_{n+1}\rangle>0$ shows first that the vertical projection is a local diffeomorphism. To prove
		global injectivity, let \(\Omega_0\) be the bounded domain enclosed by \(M_0\)
		and suppose that a vertical line
		\[
		L_{\bar z}=\{(\bar z,s):s\in\mathbb R\}
		\]
		meets \(M_0\cap\{x_{n+1}\ge c_1\}\) in two distinct points
		\[
		z^-=(\bar z,s_-),
		\qquad
		z^+=(\bar z,s_+),
		\qquad
		c_1\le s_-<s_+.
		\]
		Both intersections are transverse. Moreover,
		\(\langle\nu,e_{n+1}\rangle>0\) means that, as the line is crossed in the
		direction of increasing \(s\), both intersections are exit points from
		\(\Omega_0\). This is impossible, since the transverse intersections of the
		boundary of a bounded domain with an oriented line alternate between entry
		points, where \(\langle\nu,e_{n+1}\rangle<0\), and exit points, where
		\(\langle\nu,e_{n+1}\rangle>0\). Any intermediate entry point would also have
		height at least \(c_1\), contradicting the positivity of
		\(\langle\nu,e_{n+1}\rangle\) on the whole portion. Thus every vertical line
		meets this portion at most once, and the portion is a graph.

		Since the vertical segment joining $x$ to $\tilde x$ is contained in
		$\{x_{n+1}\ge c_1\}$, the graph property implies that its interior
		does not meet $M_0$. Moreover, $\langle\nu(x),e_{n+1}\rangle>0$ shows
		that the segment enters $\Omega_0$ immediately below $x$. Hence its
		interior lies in $\Omega_0$, and, as $\tilde x\notin M_0$, also
		$\tilde x\in\Omega_0$. Thus
		\[
		\mathcal R_\Pi\bigl(M_0\cap\Pi^+\bigr)\subset\Omega_0\cap\Pi^-.
		\]
		
		Finally, we observe that property \eqref{graph2} implies that the normal direction to $\Pi$ is not tangent to $M_0$ at the points of $M_0 \cap \Pi$. We conclude that
		\(M_0\) reflects strictly across \(\Pi_{c_0,e}\). Since the inequality required on $c_0$ is also satisfied by the values greater than $c_0$, the property holds for these values too, and
		we have the strict reflection of $M_0$ up to \(\Pi_{c_0,e}\) as in Definition \ref{def:reflection} (d).
	\end{proof}
	
	\begin{rem}\label{rem:reflection-threshold}
		We observe that
		$
		\Phi(\varepsilon_0,\theta_0)
		\to 0$
		as 
		$\varepsilon_0+\theta_0\to 0$. Therefore we can make the reflection threshold $c_0$ as small as we wish compared to $R_0$ by imposing that $\varepsilon_0+\theta_0$ are sufficiently small. 
	\end{rem}

	\subsection{Reflection and first-order closeness to spheres} \label{subsec:reflection-C1-equivalence}
	
	Before considering the evolution, we isolate the flow-independent geometric consequences of strong Alexandrov reflection. In the previous subsection, we have shown that geometric \(C^1\)-closeness to a sphere yields the reflection property across hyperplanes with distance from the origin greater than a small threshold $c_0$. By adapting ideas from \cite{Chow98}, we now show that the converse is also true, and that the reflection up to a sufficiently small $c_0$ gives a control both on the radial oscillation and on the tilt of the hypersurface. The reflection threshold $c_0$ may therefore be regarded as a geometric measure of first-order deviation from a sphere.

	\begin{prop}[Radial control from reflection]
		\label{prop:static-radial-control}
		Let \(M=\partial\Omega\subset\mathbb R^{n+1}\) be a smooth, closed,
		connected, embedded hypersurface, with \(0\in\Omega\), and let $R_0>0$ be such that $\Omega$
		has the same volume as \(B_{R_0}(0)\). Suppose that there exists $c_0 \in (0,R_0/2)$ such that 
		\(M\) reflects across \(\Pi_{c_0,e}\) for every
		\(e\in\mathbb S^n\). Then
		\begin{equation}\label{eq:static-radial-control}
		R_0-2c_0
		\le
		|x|
		\le
		R_0+2c_0
		\qquad\text{on }M.
		\end{equation}
	\end{prop}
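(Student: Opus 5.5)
The plan is to run the Chow--Gulliver ``reflect one point onto another'' argument, using the perpendicular bisector of two points as the reflecting hyperplane, and then to conclude by comparing volumes with $B_{R_0}(0)$.

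\emph{Elementary computation.} Given two points $a,b\in\mathbb R^{n+1}$ with $|a|>|b|$, put $e=(a-b)/|a-b|$. The reflection exchanging $a$ and $b$ is $\mathcal R_{\Pi_{\lambda,e}}$ with
\[
\lambda=\frac{|a|^2-|b|^2}{2|a-b|}\ \ge\ \frac{(|a|-|b|)(|a|+|b|)}{2(|a|+|b|)}=\frac{|a|-|b|}{2},
\]
since $|a-b|\le|a|+|b|$. Moreover $a$ lies in $\Pi^+_{\lambda,e}$. Hence $|a|-|b|>2c_0$ forces $\lambda>c_0$.

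\emph{Upper bound.} Let $x\in M$ realise $r_{\max}=\max_M|x|$. Take any $z$ with $|z|<r_{\max}-2c_0$, and any point $x'\in\Omega$ close to $x$, so that still $|x'|-|z|>2c_0$. By the computation with $a=x'$, $b=z$, the point $x'$ lies in $\Omega\cap\Pi^+_{\lambda,e}$ with $\lambda>c_0$. The reflection property then gives $z=\mathcal R_\Pi(x')\in\Omega$. Therefore $B_{r_{\max}-2c_0}(0)\subset\Omega$. Comparing volumes with $B_{R_0}(0)$ gives $r_{\max}-2c_0\le R_0$.

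\emph{Lower bound.} Let $x\in M$ realise $r_{\min}=\min_M|x|$. Suppose some $z\in\Omega$ had $|z|>r_{\min}+2c_0$. Apply the computation with $a=z$, $b=x$: then $z\in\Omega\cap\Pi^+_{\lambda,e}$ with $\lambda>c_0$, so reflection gives $x\in\Omega$. This is impossible, because $\Omega$ is open and $x\in\partial\Omega$. Hence $\Omega\subset\overline B_{r_{\min}+2c_0}(0)$, and volume comparison yields $R_0\le r_{\min}+2c_0$. Together these two bounds give \eqref{eq:static-radial-control}. The hypothesis $c_0<R_0/2$ only ensures that the lower bound $R_0-2c_0$ is positive.

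\emph{Main obstacle.} Both steps reflect across $\Pi_{\lambda,e}$ with $\lambda>c_0$, not exactly $\lambda=c_0$. The cleanest resolution is to use reflection \emph{up to} $\Pi_{c_0,e}$, which is precisely what Proposition~\ref{reflect} and Theorem~\ref{thm:CG} provide in all applications. I would state the hypothesis in that form, or remark that this is how it is used.

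If one insists on reflection only at the single level $c_0$ for every $e$, I would first try to show this implies reflection at every $\lambda\ge c_0$, via the standard moving-plane monotonicity. The idea is this: if $\mathcal R_{\Pi_{c_0,e}}(\Omega\cap\Pi^+)\subset\Omega$ for every $e$, then for $\lambda>c_0$ a point $y\in\Omega\cap\Pi^+_{\lambda,e}$ can be moved to $\mathcal R_{\Pi_{\lambda,e}}(y)$ by composing finitely many reflections across tilted planes at level $c_0$. Each intermediate image stays on the far side of the next plane, and intermediate points must stay inside $\Omega$.

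I expect this composition step to be the delicate point. Should it fail, I would simply adopt the ``up to'' hypothesis, which costs nothing for the results of the paper.
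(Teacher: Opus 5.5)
Your argument is correct, and it rests on the same computation as the paper's: the perpendicular bisector of $a$ and $b$ lies at level $\frac{|a|^2-|b|^2}{2|a-b|}\ge\frac{|a|-|b|}{2}$. The difference is in how the bound is assembled.

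\emph{The paper's route.} It applies this once, to the extremal pair $x_+,x_-$, to get Chow's inequality $r_+^2-r_-^2\le 2c_0|x_+-x_-|$. This gives the oscillation bound $r_+-r_-\le 2c_0$. It then locates the annulus using $0\in\Omega$, which gives $B_{r_-}(0)\subset\Omega\subset\overline B_{r_+}(0)$ and hence $r_-\le R_0\le r_+$.

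\emph{Your route.} You reflect interior points onto arbitrary points. This yields $B_{r_{\max}-2c_0}(0)\subset\Omega\subset\overline B_{r_{\min}+2c_0}(0)$, and you then compare volumes on each side separately.

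\emph{What each buys.} Your version never uses $0\in\Omega$; it follows from the first inclusion. It also deals explicitly with the fact that $x_\pm$ lie on $M$, not in $\Omega$. With non-strict reflection, sending $x_+$ to $x_-$ gives no contradiction by itself, so some perturbation is needed; the paper leaves this inside the citation. The paper's organization isolates $r_+-r_-\le 2c_0$, which it reuses for the centre estimate in Proposition~\ref{prop:euclidean-eventual-convexity}. Your first inclusion gives this too: $M\cap\Omega=\emptyset$ forces $r_{\min}\ge r_{\max}-2c_0$.

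\emph{The obstacle you flag.} The paper's proof has the same issue. Chow's inequality also comes from reflecting across the bisector of $x_+$ and $x_-$, which sits at level $\frac{r_+^2-r_-^2}{2|x_+-x_-|}$, not at $c_0$. So reading the hypothesis as reflection up to $\Pi_{c_0,e}$ matches how the proposition is actually applied.

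\emph{Removing it without compositions of reflections.} For $y\in\Omega$, the images of $y$ under reflection across the planes $\Pi_{c_0,e}$ with $\langle y,e\rangle>c_0$ are exactly the points $z\neq y$ with $|y|^2-|z|^2=2c_0|y-z|$. Hence the bounded open set $D_y=\{z:|y|^2-|z|^2>2c_0|y-z|\}$ satisfies $\partial D_y\subset\Omega$. Suppose $D_y\not\subset\Omega$. Since $D_y$ is open and points of $M$ are limits of exterior points, $D_y$ then meets $\mathbb R^{n+1}\setminus\overline\Omega$. That region is connected and unbounded because $M$ is connected. A path in it going to infinity would have to cross $\partial D_y\subset\Omega$, which is impossible. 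Hence $D_y\subset\Omega$.

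Finally, $D_y$ contains every reflection of $y$ across $\Pi_{\lambda,e}$ with $\lambda>c_0$, and every $z$ with $|z|<|y|-2c_0$. That is all your two steps use.
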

	
	\begin{proof}
		Set
		\[
		r_-=\min_{x\in M}|x|
		\qquad \text{and} \qquad
		r_+=\max_{x\in M}|x|,
		\]
		and choose \(x_-,x_+\in M\) so that $|x_-|=r_-$, $|x_+|=r_+$. 
		Then the reflection property of $M$ across $\Pi_{c_0,e}$, with $e$ parallel to $x_+-x_-$ (see the proof of 
		\cite[Corollary~2.9]{Chow98}) yields the inequality
			\begin{equation}\label{eq:Chow-distance-comparison}
		r_+^2-r_-^2
		\le
		2c_0\,|x_+ -x_-|.
		\end{equation}
		The triangle inequality gives
		\[
		\begin{aligned}
		|x_+-x_-|
		&\le
		|x_+|+|x_-|= 
		r_+ +r_-.
		\end{aligned}
		\]
		Therefore, \eqref{eq:Chow-distance-comparison} implies
		\[
		\begin{aligned}
		\bigl(r_+ -r_-\bigr)
		\bigl(r_++r_-\bigr)
		\le
		2c_0\bigl(r_++r_-\bigr).
		\end{aligned}
		\]
		Since \(M\) is nonempty, \(r_+>0\), and hence
		\(
		r_+ +r_->0.
		\)
		We may therefore divide by the latter and conclude that
		\begin{equation}\label{eq:radial-oscillation}
		r_+-r_-\le2c_0.
		\end{equation}
		
		It remains to locate this annulus relative to \(R_0\). Notice that
		\(
		0\in\Omega
		\)
		gives
		\[
		B_{r_-}(0)
		\subset
		\Omega
		\subset
		B_{r_+}(0).
		\]
		The volume normalization therefore implies
		$r_-\le R_0\le r_+.$ 
		Combining this and \eqref{eq:radial-oscillation}, we find
		\begin{equation}\label{eq:R0-between-radii}
		R_0-2c_0 \le r_+-2c_0\le	r_-\le R_0\le r_+ \le
		r_-+2c_0 \le
		R_0+2c_0.
		\end{equation}
		Since \(R_0>2c_0\) by assumption, the lower bound above is strictly positive. Therefore \(M\)
		cannot pass through the origin. Thus, for every \(x\in M\),
		\(
		r_-\le|x|\le r_+,
		\)
		and hence
		\[
		R_0-2c_0
		\le |x|\le
		R_0+2c_0,
		\]
		which finishes the proof.
	\end{proof}

	\begin{prop}[Normal control from reflection]
		\label{prop:static-normal-control}
		Let \(M\) and $R_0$ be as in
		Proposition~\ref{prop:static-radial-control}.  
		Suppose that $c_0 \in (0,R_0/3)$ is such that 
		\(M\) reflects strictly up to \(\Pi_{c_0,e}\) for every
		\(e\in\mathbb S^n\). Then 
		\begin{equation}\label{eq:normal-oscillation_0}
		\left\langle
		\nu,\frac{x}{|x|}
		\right\rangle^2
		\ge
		1-
		\left(
		\frac{c_0}{R_0-2c_0}
		\right)^2
		\qquad\text{on }M.
		\end{equation}
		In particular, \(M\) is star-shaped with respect to the origin.
	\end{prop}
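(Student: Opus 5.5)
The plan is to reduce the normal estimate to a pointwise sign condition of Chow--Gulliver type, and then optimise over directions \(e\) lying in the plane spanned by the radial direction and the normal.

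\emph{Step 1 (sign of the normal beyond the threshold).} I will show that for every \(e\in\mathbb S^n\) and every \(x\in M\) with \(\langle x,e\rangle\ge c_0\), one has \(\langle\nu(x),e\rangle>0\). Set \(\lambda=\langle x,e\rangle\ge c_0\), so that \(x\in M\cap\Pi_{\lambda,e}\). By hypothesis \(M\) reflects strictly across \(\Pi_{\lambda,e}\), so \(e\) is not tangent to \(M\) at \(x\); hence \(\langle\nu(x),e\rangle\neq0\). Suppose \(\langle\nu(x),e\rangle<0\). Then \(\Omega\) lies locally on the \(+e\) side of \(M\) near \(x\), so the points \(x+se\) belong to \(\Omega\cap\Pi^+\) for small \(s>0\). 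Their reflections \(x-se\) lie outside \(\overline\Omega\), because the outward normal has a negative \(e\)-component. This contradicts the inclusion \eqref{eq:reflection-domain-inclusion}, so \(\langle\nu(x),e\rangle>0\).

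\emph{Step 2 (choice of direction).} Fix \(x\in M\) and write \(\xi=x/|x|\) and \(\nu=\cos\alpha\,\xi+\sin\alpha\,\eta\), with \(\eta\perp\xi\) a unit vector and \(\alpha\in[0,\pi]\); if \(\sin\alpha=0\), take any such \(\eta\). Taking \(e=\xi\) in Step 1 is allowed, since by Proposition~\ref{prop:static-radial-control} (which applies because reflection up to \(\Pi_{c_0,e}\) includes reflection across it, and \(c_0<R_0/2\)) we have \(|x|\ge R_0-2c_0>c_0\), using \(c_0<R_0/3\). This gives \(\cos\alpha>0\), so \(\alpha<\pi/2\). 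Next, for \(\beta\in[0,\pi/2)\) set \(e_\beta=\cos\beta\,\xi-\sin\beta\,\eta\). Then
\[
\langle x,e_\beta\rangle=|x|\cos\beta,
\qquad
\langle \nu,e_\beta\rangle=\cos(\alpha+\beta).
\]
Step 1 shows that whenever \(\cos\beta\ge c_0/|x|\) we have \(\cos(\alpha+\beta)>0\), i.e.\ \(\alpha+\beta<\pi/2\). Choosing \(\beta=\arccos(c_0/|x|)\) gives \(\alpha\le\pi/2-\beta\), and therefore
\[
\cos\alpha\ge\sin\beta=\sqrt{1-c_0^2/|x|^2}.
\]

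\emph{Step 3 (conclusion).} Inserting \(|x|\ge R_0-2c_0\) from \eqref{eq:static-radial-control} yields \eqref{eq:normal-oscillation_0}. The positivity \(\langle\nu,x\rangle>0\) obtained in Step 2 gives star-shapedness with respect to the origin.

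The main point requiring care is Step 1: the local argument that strict reflection at the hyperplane through \(x\) forces the normal to point to the \(+e\) side. In particular, the boundary case \(\lambda=\langle x,e\rangle\) must be handled exactly by the strictness assumption, namely non-tangency of \(e\), rather than by a limiting argument.
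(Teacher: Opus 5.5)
Your proof is correct. It uses the same mechanism as the paper: the non-tangency clause of strict reflection beyond the threshold, tested on a direction in the plane of \(x\) and \(\nu\), combined with \(|x|\ge R_0-2c_0\). You differ from the paper in which direction you test and in how you get the sign.

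The paper takes \(e=x^\top/|x^\top|\), which in your notation is \(e_\beta\) with \(\beta=\pi/2-\alpha\). This vector is tangent, so \(\langle\nu,e\rangle=0\) automatically. Non-tangency alone then forces \(\langle x,e\rangle=|x^\top|\le c_0\). That bounds \(\langle\nu,x/|x|\rangle^2\) immediately, without knowing the sign of \(\cos\alpha\), and the tangential estimate \(|x^\top|\le c_0\) is reused later for the support function. Star-shapedness is obtained separately in the paper, from the connectedness of \(M\) and the sign of \(\langle\nu,x\rangle\) at a point where \(|x|\) is minimal.

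Your route first proves the sign lemma of Step 1, which uses the inclusion \eqref{eq:reflection-domain-inclusion} in addition to non-tangency. You then test the direction lying exactly at height \(c_0\). This costs one extra local argument, but it gives \(\langle\nu,x\rangle>0\) pointwise without using connectedness. Your conclusion is equivalent to the paper's intermediate estimate, since \(|x^\top|=|x|\sin\alpha\).
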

	
	\begin{proof}
		By definition of strict reflection, for any $x \in M$ and \(e\in\mathbb S^n\) we have
		\begin{equation}\label{eq:reflection-transversality}
		e\notin T_xM
		\qquad\text{whenever}\qquad
		\langle x,e\rangle>c_0.
		\end{equation}
		
		For any $x \in M$, we decompose the position vector into its normal and
		tangential components:
		\[
		x=\langle x,\nu\rangle\nu+x^\top,
		\qquad \text{with} \quad 
		x^\top=x-\langle x,\nu\rangle\nu.
		\]
		We claim that
		\begin{equation}\label{eq:tangential-position-bound}
		|x^\top|\le c_0.
		\end{equation}
		If \(x^\top=0\), there is nothing to prove. Otherwise, set \(
		e=x^\top/|x^\top|.\) Then \(e\in T_xM\) and
		\(\langle x,e\rangle =
		|x^\top|\). If \(|x^T|>c_0\), we would have
		\[
		\langle x,e\rangle>c_0
		\qquad\text{and}\qquad
		e\in T_xM,
		\]
		contradicting \eqref{eq:reflection-transversality}. Therefore
		\eqref{eq:tangential-position-bound} holds. This is also the tangential-position estimate obtained from Alexandrov
		reflection in
		\cite[Corollary~2.4]{Chow98} and
		\cite[Corollary~19(1)]{Chow23}.

		We now use the orthogonal decomposition of \(x\). Since
		\(
		|x|^2
		=
		\langle x,\nu\rangle^2+|x^\top|^2
		\), we have
		
		\[
		\begin{aligned}
		1-
		\left\langle
		\nu,\frac{x}{|x|}
		\right\rangle^2
		=
		\frac{|x|^2-\langle x,\nu\rangle^2}{|x|^2}         
		= \frac{|x^\top|^2}{|x|^2} \le
		\left(
		\frac{c_0}{R_0-2c_0}
		\right)^2,
		\end{aligned}
		\]
		where we have applied \eqref{eq:tangential-position-bound} and Proposition~\ref{prop:static-radial-control} for the last inequality.
		
		Notice that, since \(3c_0<R_0\), then
		\(
		\frac{c_0}{R_0-2c_0}<1,
		\)
		hence the right-hand side of \eqref{eq:normal-oscillation_0} is strictly
		positive. Thus
		\(	\left\langle
		\nu,\frac{x}{|x|}
		\right\rangle
		\)
		does not vanish on \(M\). Since \(M\) is connected, this quantity has a
		constant sign. At a point where \(|x|\) attains its minimum, the outward
		unit normal points away from the origin, because \(0\in\Omega\).
		Therefore the sign is positive, and \(M\) is star-shaped.
	\end{proof}
	
	Within the class of smooth, closed, connected, embedded hypersurfaces enclosing the same volume as \(B_{R_0}(0)\) and containing the origin in the enclosed region, strong Alexandrov reflection in every direction and geometric \(C^1\)-closeness to \(\partial B_{R_0}(0)\) determine each other quantitatively, up to a change of parameters. More precisely,
	
	\begin{cor}[Reflection and geometric \(C^1\)-closeness]
		\label{cor:reflection-C1-equivalence}
		If \(M\) satisfies
		\eqref{Haus} and \eqref{nor-osc}, then every \(c_0>0\) satisfying \eqref{size_c0} 
		is a reflection threshold for \(M\). Conversely, a common reflection threshold \(c_0\), with \(3c_0<R_0\), implies the radial and normal estimates of Propositions~\ref{prop:static-radial-control} and~\ref{prop:static-normal-control}. In particular, both the relative annular width and the maximal angle between the normal and the radial direction tend to zero as \(c_0/R_0\to0\).
	\end{cor}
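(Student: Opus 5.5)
The corollary is essentially a repackaging of the three preceding results, so the plan is to assemble them and check that their hypotheses are met in each direction.

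\emph{Forward direction.} Let \(M\) satisfy \eqref{Haus} and \eqref{nor-osc}, and let \(c_0>0\) satisfy \eqref{size_c0}. Proposition~\ref{reflect} gives strict reflection of \(M\) up to \(\Pi_{c_0,e}\) for every \(e\in\mathbb S^n\). By Definition~\ref{def:reflection}(d), this includes reflection across \(\Pi_{c_0,e}\) itself. So \(c_0\) is a common reflection threshold. The proof of Proposition~\ref{reflect} used only the annulus bound, the normal bound and embeddedness, with no flow involved, so the statement holds verbatim for a static \(M\).

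\emph{Converse direction.} Assume \(M\) is in the stated class: smooth, closed, connected, embedded, enclosing volume \(\omega_{n+1}R_0^{n+1}\), with \(0\in\Omega\). Assume also that \(M\) reflects strictly up to \(\Pi_{c_0,e}\) for all \(e\), with \(3c_0<R_0\).
\begin{enumerate}
\item Since \(3c_0<R_0\) gives \(c_0<R_0/2\), Proposition~\ref{prop:static-radial-control} applies and yields \(R_0-2c_0\le|x|\le R_0+2c_0\).
\item Proposition~\ref{prop:static-normal-control} then applies directly. It gives \(\langle\nu,x/|x|\rangle\ge\sqrt{1-(c_0/(R_0-2c_0))^2}\), using that this quantity is positive, which was shown there.
\end{enumerate}
Thus \(M\) satisfies \eqref{Haus} with \(\varepsilon_0=2c_0/R_0\), and \eqref{nor-osc} with \(\sin\theta_0=c_0/(R_0-2c_0)\).

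\emph{Limit statement.} Write \(\eta=c_0/R_0\). The relative annular width is \(4\eta\). The maximal angle \(\theta\) satisfies \(\sin\theta\le \eta/(1-2\eta)\). Both tend to \(0\) as \(\eta\to0\).

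The only point requiring care is the notion of "threshold". The converse needs \emph{strict} reflection up to \(c_0\) in order to use the transversality \eqref{eq:reflection-transversality}. I would read "reflection threshold" in the corollary as "reflects strictly up to \(\Pi_{c_0,e}\) for all \(e\)", which is exactly what the forward direction produces. With that convention the two directions are consistent, and there is no real obstacle.
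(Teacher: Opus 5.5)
Your proposal is correct and matches the paper's proof: the forward implication is Proposition~\ref{reflect}, and the converse applies Propositions~\ref{prop:static-radial-control} and~\ref{prop:static-normal-control} within the volume-normalized class with \(0\in\Omega\). Your reading of ``reflection threshold'' as strict reflection up to \(\Pi_{c_0,e}\) for all \(e\) is the one the paper uses, and your explicit parameters \(\varepsilon_0=2c_0/R_0\) and \(\sin\theta_0\le c_0/(R_0-2c_0)\) make the final limit statement quantitative.
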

	
	\begin{proof}
		The first implication is Proposition~\ref{reflect}. The converse
		follows from
		Propositions~\ref{prop:static-radial-control}
		and~\ref{prop:static-normal-control}.
	\end{proof}

	The importance of the reflection formulation is that it is preserved by the evolution, by Theorem \ref{thm:CG}. Starting from the initial geometric assumptions, we obtain a strong reflection property, propagate it with the same threshold \(c_0\), and then apply the converse implication of Corollary~\ref{cor:reflection-C1-equivalence} to each time slice.

	\subsection{Preservation of the starting conditions under the flow}
	
	\begin{prop}[Annular closeness and normal oscillation for later times]
		\label{prop:later-C^1-closeness}
		Let \(M_0\subset\mathbb R^{n+1}\) satisfy \eqref{Haus} and \eqref{nor-osc} and let $M_t$ be
		the solution of the flow \eqref{vp-mcf}. Let \(c_0\) be chosen as in Proposition~\ref{reflect}, and
		assume that  \(\varepsilon_0,\theta_0\) are sufficiently small so that $R_0>3c_0$.
		
		Then, for every time for which the
		solution is defined, we have
		\begin{equation}\label{eq:hausdorff-closeness}
		R_0-2c_0
		\le |x|\le
		R_0+2c_0
		\qquad\text{on }M_t 
		\medskip
		\end{equation}
		\begin{equation}\label{eq:normal-oscillation}
		\left\langle
		\nu,\frac{x}{|x|}
		\right\rangle^2
		\ge
		1-
		\left(
		\frac{c_0}{R_0-2c_0}
		\right)^2
		\qquad\text{on }M_t.
		\end{equation}\end{prop}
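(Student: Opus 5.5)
The plan is to combine Proposition~\ref{reflect}, Theorem~\ref{thm:CG} and the static Propositions~\ref{prop:static-radial-control} and~\ref{prop:static-normal-control}, applied to each time slice $M_t$.

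\textbf{Step 1: reflection at time $0$.} Since $c_0$ satisfies \eqref{size_c0}, Proposition~\ref{reflect} shows that $M_0$ reflects strictly up to $\Pi_{c_0,e}$ for every $e\in\mathbb S^n$.

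\textbf{Step 2: propagation in time.} By Theorem~\ref{thm:CG}, applied separately to each direction $e$, every $M_t$ in the existence interval also reflects strictly up to $\Pi_{c_0,e}$ for all $e$. In particular it reflects across $\Pi_{c_0,e}$ for all $e$, with the same threshold $c_0$.

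\textbf{Step 3: the hypotheses of the static propositions hold for each $M_t$.} Four things are needed.
\begin{enumerate}
\item[(i)] $M_t$ is smooth, closed, connected and embedded. This holds for the smooth solution as long as it exists. Embeddedness is also consistent with the reflection property, and connectedness is inherited from $M$.
\item[(ii)] $\Omega_t$ has the volume of $B_{R_0}(0)$. This is exactly the volume-preserving property of \eqref{vp-mcf}: $\frac{d}{dt}\operatorname{vol}(\Omega_t)=\int_{M_t}(h-H)\,d\mu_t=0$ by \eqref{average-H}.
\item[(iii)] $0\in\Omega_t$. This is the one point needing an argument, and I expect it to be the main obstacle; I argue by continuity. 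At $t=0$, \eqref{Haus} together with the star-shapedness given by \eqref{nor-osc} yields $0\in\Omega_0$. Suppose $0\in\Omega_t$ fails for some time. Since $M_t$ moves continuously, there is a first time $t_1$ at which $0\in M_{t_1}$, and $0\in\Omega_t$ for all $t<t_1$. For $t<t_1$, Proposition~\ref{prop:static-radial-control} gives $|x|\ge R_0-2c_0$ on $M_t$. Letting $t\to t_1$ gives $|x|\ge R_0-2c_0>0$ on $M_{t_1}$, so $0\notin M_{t_1}$, a contradiction. Alternatively, one can observe directly that reflection in all directions with threshold $c_0$ forces $\overline{B_{R_0-2c_0}}\subset\Omega_t$ by an argument independent of $0\in\Omega_t$, but the continuity argument is simpler.
\item[(iv)] $3c_0<R_0$. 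This is assumed.
\end{enumerate}

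\textbf{Step 4: conclusion.} Proposition~\ref{prop:static-radial-control} now gives \eqref{eq:hausdorff-closeness} on each $M_t$, and Proposition~\ref{prop:static-normal-control} gives \eqref{eq:normal-oscillation}. The only genuinely time-dependent input is Theorem~\ref{thm:CG}; the rest is the static Corollary~\ref{cor:reflection-C1-equivalence} applied slice by slice.
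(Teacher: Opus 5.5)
Your proposal is correct and follows the paper's proof essentially step by step. You apply Proposition~\ref{reflect} at $t=0$, propagate the threshold $c_0$ by Theorem~\ref{thm:CG}, keep $0\in\Omega_t$ by a continuity argument (the lower radial bound $R_0-2c_0>0$ rules out the origin ever reaching $M_t$), and then apply Propositions~\ref{prop:static-radial-control} and~\ref{prop:static-normal-control} slice by slice. The paper phrases the continuity step through the set of times $I$ on which $0\in\Omega_s$ for all earlier $s$, which is the same argument as your first-hitting-time formulation.
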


	\begin{proof}
		By Proposition~\ref{reflect}, \(M_0\) reflects strictly up to
		\(\Pi_{c_0,e}\) for every \(e\in\mathbb S^n\). Then Theorem \ref{thm:CG} implies that the same threshold \(c_0\) is preserved throughout the existence interval.

		We now wish to apply the static consequences of reflection from the previous subsection. Let us check that the property $0 \in \Omega_t$ holds throughout the flow. Since
		\(0\in\Omega_0\), let
		\[
		I
		=
		\left\{
		t\ge0:
		0\in\Omega_s
		\text{ for every }0\le s\le t
		\right\}
		\]
		within the maximal existence interval. This set contains a nontrivial
		interval starting at \(t=0\). For every \(t\in I\), the hypersurface
		\(M_t\) has the same enclosed volume as \(M_0\), namely
		\[
		|\Omega_t|
		=
		|\Omega_0|
		=
		\omega_{n+1}R_0^{n+1},
		\]
		and reflects strictly up to \(\Pi_{c_0,e}\) for every direction \(e\).
		Proposition~\ref{prop:static-radial-control}
		applies to \(M_t\) with
		the same constants \(R_0\) and \(c_0\). In particular, the lower radial bound
		\[
		|x|\ge R_0-2c_0>0
		\]
		also shows that \(M_t\) cannot pass through the origin. Since
		\(0\in\Omega_0\), the origin cannot leave the enclosed region without
		first belonging to its boundary. Therefore \(0\in\Omega_t\) for every
		time in the maximal existence interval. It follows that \(I\) is the
		entire existence interval, and the above estimates
		hold throughout it.
		
		This shows that we can apply Propositions~\ref{prop:static-radial-control} and
		\ref{prop:static-normal-control} for all $t$ for which the flow exists, for the
		fixed reference radius \(R_0\) and the single reflection threshold \(c_0\). The conclusion follows.
	\end{proof}

	\begin{cor}[Control of the support function] \label{cor:support-function-control} Under the assumptions of the previous proposition, the support function $u=\langle x,\nu\rangle$ satisfies \begin{equation}\label{eq:support-function-control} R_0-3c_0 \le u \le R_0+2c_0 \end{equation} throughout the existence interval. In particular, setting $\eta_0:=3c_0,$ we have \begin{equation}\label{eq:support-function-closeness} |u-R_0|\le\eta_0 \end{equation} with $\eta_0 = \eta_0(\varepsilon_0,\theta_0) \to 0$ as $\varepsilon_0+\theta_0\to 0$.  \end{cor}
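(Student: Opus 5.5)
The plan is to combine the two estimates of Proposition~\ref{prop:later-C^1-closeness} with the orthogonal decomposition of the position vector already used in the proof of Proposition~\ref{prop:static-normal-control}. Fix a time in the existence interval and a point \(x\in M_t\). We write \(x=u\,\nu+x^\top\) with \(u=\langle x,\nu\rangle\), so that
\[
|x|^2=u^2+|x^\top|^2 .
\]

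For the upper bound we use Cauchy--Schwarz, which gives \(u\le|x|\). Combined with the outer radial bound \eqref{eq:hausdorff-closeness}, this yields \(u\le R_0+2c_0\).

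For the lower bound we first note that \(u>0\). Indeed, \eqref{eq:normal-oscillation} together with the star-shapedness conclusion of Proposition~\ref{prop:static-normal-control} (applied to \(M_t\), which is legitimate as shown in the proof of Proposition~\ref{prop:later-C^1-closeness}) gives \(\langle\nu,x/|x|\rangle>0\). Next, strict reflection up to \(\Pi_{c_0,e}\) is preserved by Theorem~\ref{thm:CG}, so the tangential estimate \eqref{eq:tangential-position-bound} still holds on \(M_t\), namely \(|x^\top|\le c_0\). Therefore
\[
u=\sqrt{|x|^2-|x^\top|^2}\ge \sqrt{|x|^2-c_0^2}\ge |x|-c_0\ge R_0-3c_0 .
\]
Here we used the elementary inequality \(\sqrt{a^2-b^2}\ge a-b\) for \(a\ge b\ge0\), and \(|x|\ge R_0-2c_0>c_0\), which holds because \(R_0>3c_0\).

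Combining the two bounds gives \(-3c_0\le u-R_0\le 2c_0\), hence \(|u-R_0|\le 3c_0=\eta_0\). For the final claim, we may take \(c_0\) to be any value slightly larger than \(\tfrac{R_0}{2}\Phi(\varepsilon_0,\theta_0)\), for instance \(c_0=R_0\Phi(\varepsilon_0,\theta_0)\) when \(\Phi>0\). Remark~\ref{rem:reflection-threshold} then gives \(\eta_0\to0\) as \(\varepsilon_0+\theta_0\to0\).

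There is no serious obstacle. The only point needing care is to justify that the tangential bound \(|x^\top|\le c_0\) is available at positive times, rather than just the squared-angle inequality \eqref{eq:normal-oscillation}. This follows because strict reflection up to \(\Pi_{c_0,e}\) holds for every \(e\) at every time, so the argument of \eqref{eq:tangential-position-bound} applies verbatim to \(M_t\).
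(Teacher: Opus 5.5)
Your proof is correct and follows the paper's argument. It uses the decomposition $x=u\nu+x^\top$, the preserved tangential bound $|x^\top|\le c_0$, positivity of $u$ from the normal control, and $u\le|x|\le R_0+2c_0$ for the upper bound. The only cosmetic difference is that you apply $\sqrt{a^2-b^2}\ge a-b$ pointwise before inserting $|x|\ge R_0-2c_0$, whereas the paper inserts the radial bound first and then uses $\sqrt{(R_0-2c_0)^2-c_0^2}\ge R_0-3c_0$.
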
 
	
	\begin{proof} First, notice that the tangential estimate
		\eqref{eq:tangential-position-bound}, applied to \(x \in M_t\), gives
		$
		|x^\top|\le c_0.
		$
		Since
		$
		x=x^\top+u\nu,
		$
		we have
		$
		u^2=|x|^2-|x^\top|^2.
		$
		Estimate~\eqref{eq:normal-oscillation} gives \(u>0\), while \eqref{eq:hausdorff-closeness} yields
		\[
		u
		\ge
		\sqrt{(R_0-2c_0)^2-c_0^2} \geq R_0 - 3 c_0,
		\]
		where the latter follows from \(3c_0<R_0\). Finally, 
		the upper bound holds because
		$
		u\le|x|\le R_0+2c_0.
		$
	\end{proof}

	\begin{rem}\label{rem:choice-reflection-threshold}
		From now on, we assume that the initial geometric parameters $\varepsilon_0,\theta_0$ are chosen sufficiently small in order that the corresponding reflection threshold satisfies
		$
		3c_0<R_0,
		$
		as required in Proposition \ref{prop:later-C^1-closeness}.
		This is certainly possible, in view of Remark \ref{rem:reflection-threshold}.
	\end{rem}

	\section{Curvature control under the flow} \label{sec:curvature-control}
	
	In this section we use the $C^1$ control from the previous section to obtain a uniform bound of the curvature.
	
	\subsection{Control of the mean curvature and the global term} \mbox{ } \medskip
	
	Let us first estimate the global term in the flow equation \eqref{vp-mcf}.
	
	\begin{lemma}[Estimate for the nonlocal term] \label{bound-h}
		Let \(\eta_0=3c_0\) be as in Corollary~\ref{cor:support-function-control}. If the initial parameters are sufficiently small that $ \eta_0<\frac{R_0}{2},$ then the averaged mean curvature defined in \eqref{average-H}
		satisfies
		\[
		\bigg|h(t) - \frac{n}{R_0}\bigg|
		\le
		\frac{\eta_0}{R_0}\max_{M_t}|H|,
		\]
		throughout the existence interval.
	\end{lemma}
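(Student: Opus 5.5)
We will use the Minkowski formula $\int_{M_t}(n-Hu)\,d\mu_t=0$, where $u=\langle x,\nu\rangle$ is the support function controlled in Corollary~\ref{cor:support-function-control}. This identity follows by integrating $\operatorname{div}_{M_t}(x^\top)=n-Hu$ over the closed hypersurface $M_t$. It expresses $n|M_t|$ as $\int_{M_t} Hu\,d\mu_t$. The idea is to replace $u$ by $R_0$ in this integral, so that it turns into $R_0 h(t)|M_t|$, and to control the error by $|u-R_0|\le\eta_0$.

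\textbf{Step 1.} Write
\[
n|M_t|=\int_{M_t}Hu\,d\mu_t=R_0\int_{M_t}H\,d\mu_t+\int_{M_t}H(u-R_0)\,d\mu_t .
\]
Dividing by $R_0|M_t|$ and using the definition \eqref{average-H} of $h(t)$ gives
\[
\frac{n}{R_0}-h(t)=\frac{1}{R_0|M_t|}\int_{M_t}H(u-R_0)\,d\mu_t .
\]

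\textbf{Step 2.} Estimate the right-hand side pointwise. By \eqref{eq:support-function-closeness} we have $|H(u-R_0)|\le \eta_0\max_{M_t}|H|$ at every point of $M_t$. Averaging this bound over $M_t$ yields
\[
\Bigl|h(t)-\frac{n}{R_0}\Bigr|\le\frac{\eta_0}{R_0}\max_{M_t}|H| ,
\]
which is exactly the claimed estimate.

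The only points needing care are as follows. First, $M_t$ is smooth, closed and embedded, so the Minkowski formula applies; on a closed hypersurface no boundary terms arise. Second, Corollary~\ref{cor:support-function-control} is valid throughout the existence interval; this relies on Remark~\ref{rem:choice-reflection-threshold}. Third, the sign convention $\operatorname{div}_{M}x^\top=n-Hu$ must match the outward normal used here; one can check it on the sphere $\partial B_R(0)$, where $H=n/R$ and $u=R$.

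The hypothesis $\eta_0<R_0/2$ is not strictly needed for this inequality. It presumably serves later to deduce bounds such as $h\ge n/(2R_0)$ once $\max|H|$ is controlled, or to absorb terms. I therefore expect no real obstacle: the main step is recognizing that the Minkowski identity converts the two-sided support-function bound into control of the nonlocal term.
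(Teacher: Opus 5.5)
Your proof is correct and matches the paper's: it combines the Minkowski identity $\int_{M_t} Hu\,d\mu_t = n|M_t|$ with the pointwise bound $|u-R_0|\le\eta_0$ from Corollary~\ref{cor:support-function-control}. You are also right that the hypothesis $\eta_0<R_0/2$ plays no role in this particular inequality; the paper's proof does not use it either.
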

	
	\begin{proof} We recall that the support function $u$  satisfies the Minkowski identity
		$$
		\int_M Hu \, d\mu = n |M|.
		$$
		Using this and the inequality $|u-R_0| \leq \eta_0$ from Corollary~\ref{cor:support-function-control}, we conclude \medskip
		\[
		\bigg|h(t) - \frac{n}{R_0}\bigg|
		= \left| \frac1{|M_t| R_0}
		\int_M H (R_0 -u) \, d\mu_t \right| \leq \frac{\eta_0}{R_0} \max_{M_t}|H|.
		\]
	\end{proof}

	\begin{rem}[The sign of the nonlocal term] The assumptions of the theorem do not determine the sign of either \(H\) or \(h\). Indeed, for a radial graph the second-order part of the mean curvature can be written in divergence form, but its contribution to the total mean curvature does not vanish: the area element introduces a nonconstant weight, and integration by parts leaves a Hessian term with no definite sign. Since our hypotheses control only the radial position and the first derivatives of the graph, this term cannot be estimated  from the initial geometric bounds. The Tso-type argument below must therefore treat the different sign configurations of \(H\) and \(h\) separately, making it more involved than in the usual sign-definite setting. 
	\end{rem}

	In the following, when considering some scalar function $f$ defined on $M_t$, we use the notation $f_{\min}:= \min_{M_t} f$ and $f_{\max} :=\max_{M_t} f$.  Differential inequalities for these quantities are understood in terms of upper and lower Dini derivatives.
	We shall use the standard scalar maximum principle without further comment.

	\begin{prop}[Mean curvature is bounded along the flow] \label{bound-H}
		If $\eps_0, \theta_0$ are small enough, then we can find a positive constant $C=C\bigl(n,R_0,\varepsilon_0,\theta_0,
		\|H(\cdot,0)\|_{L^\infty(M_0)}\bigr)$ satisfying
		\begin{equation} \label{nor-osc-t}
		|H| \leq C \qquad \text{on} \quad M_t.
		\end{equation}
	\end{prop}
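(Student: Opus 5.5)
We adapt Tso's argument and consider the quotient
\[
\varphi=\frac{H}{u-a},
\]
where \(a\) is a fixed constant with \(0<a<R_0-\eta_0\), for instance \(a=R_0/2\). Then \(u-a\ge R_0/2-\eta_0>0\) by Corollary~\ref{cor:support-function-control}, once \(\eta_0\) is small, and \(u-a\le R_0/2+\eta_0\).

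Lemma~\ref{evol-eq}(a),(b) give
\[
(\partial_t-\Delta)\varphi=\frac{2}{u-a}\langle\nabla\varphi,\nabla u\rangle+\frac{-h|A|^2+2H^2-hH-a|A|^2H}{(u-a)^2},
\]
after the usual cancellation of the \(H|A|^2u\) terms. At a spatial maximum of \(\varphi\) with \(H>0\) large, the good term is \(-a|A|^2H\le -aH^3/n\). This dominates \(2H^2\) and \(hH\), because Lemma~\ref{bound-h} gives \(|h|\le n/R_0+(\eta_0/R_0)\max|H|\).

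The delicate term is \(-h|A|^2\) when \(h<0\). It is bounded by \((\eta_0/R_0)\max|H|\,|A|^2\), and we need this to be absorbed by \(a|A|^2H\). That works only if \(H\) at the maximum point of \(\varphi\) is comparable to \(\max|H|\). Since \(u-a\) is pinched between two positive constants whose ratio is close to \(1\), the maximum of \(\varphi\) controls the maximum of \(H\) up to a factor \(1+O(\eta_0)\). Thus at the maximum point \(H\ge(1-C\eta_0)\max H_+\).

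If \(\max|H|\) is attained by a negative value, we apply the same argument to \(-H\), using \(\psi=-H/(u-a)\). There \(+h|A|^2\) appears with \(h\) possibly positive and of order \(\max|H|\,\eta_0/R_0+n/R_0\), and the reaction term \(-a|A|^2(-H)\) has the right sign.

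So in both cases we work with \(Q=\max\bigl(\max\varphi,\max\psi\bigr)\). At the point realizing \(Q\) we obtain
\[
\frac{d}{dt}Q\le -c\,|A|^2\Bigl(aQ(1-C\eta_0)-C\eta_0 Q-C\Bigr)+CQ^2-cQ^3,
\]
with \(|A|^2\ge H^2/n\). Smallness of \(\eta_0\), equivalently of \(\varepsilon_0+\theta_0\), makes the bracket positive once \(Q\) is large. Hence \(Q\) satisfies an ODE inequality \(Q'\le C Q^2-cQ^3\) whenever \(Q\ge C\). It follows that \(Q\le\max(Q(0),C)\), which gives \(|H|\le C\bigl(n,R_0,\varepsilon_0,\theta_0,\|H(\cdot,0)\|_\infty\bigr)\).

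The main obstacle is the sign-indefinite nonlocal term \(\mp h|A|^2\), whose size scales like \(\eta_0\max|H|\,|A|^2\) rather than being bounded. Closing the argument requires comparing \(H\) at the maximum of the quotient with \(\max|H|\) via the pinching of \(u\), and treating positive and negative extremes simultaneously, so that \(\max|H|\) in Lemma~\ref{bound-h} is controlled by the same quantity being estimated. If the quotient approach fails to absorb this term, the first fallback is to replace \(u-a\) by \(u-a\) raised to a power, or by a function of \(|x|^2\), to gain a stronger good term.
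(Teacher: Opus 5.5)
Your argument is correct. It rests on the same mechanism as the paper's proof: Tso's quotient, with Lemma~\ref{bound-h} making the feedback of $\max|H|$ into $h$ small enough that the nonlocal term $-h|A|^2/(u-a)$ is absorbed by $-a\varphi|A|^2/(u-a)$, and $|A|^2\ge H^2/n$ then giving cubic decay.

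The bookkeeping differs. The paper takes $\alpha=R_0-2\eta_0$, so that $u-\alpha\in[\eta_0,3\eta_0]$ and the feedback coefficient becomes $3\eta_0^2/R_0$. It then splits according to the sign of $h(t)$ and discards terms of favourable sign in each case; for instance, when $h\ge0$ the bound on $Z_{\max}$ uses no Minkowski estimate at all. It ends with the coupled inequalities \eqref{ineq-zmax}--\eqref{ineq-zmin} for $Z_{\max}$ and $Z_{\min}$, closed by a two-sided barrier $\Lambda$ that needs $\delta_1,\delta_2<1$.

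You keep the standard shift $a=R_0/2$ and symmetrize with $Q=\max_{M_t}|\varphi|$, so that $\max|H|\le(R_0/2+\eta_0)Q$ directly. The feedback coefficient $(\eta_0/R_0)(R_0/2+\eta_0)$ is then small compared with $a$ whatever the sign of $h$. A single scalar inequality $D^+Q\le CQ^2+CQ-cQ^3$ for $Q$ large closes the argument without case analysis or coupled barriers, so your fallback is unnecessary.

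There is one slip. In your displayed evolution, the numerator term $-h|A|^2$ should be $-h(u-a)|A|^2$; as written it is not dimensionally consistent. So the term is $-h|A|^2/(u-a)$, as in \eqref{equation-Z}. Since $u-a$ is pinched near $R_0/2$, this only changes constants. The right comparison at the extremum is $|h|$ against $aQ$ rather than against $aH$, which is exactly what your final inequality for $Q$ encodes.
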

	
	\begin{proof}
		Recall that, by Corollary \ref{cor:support-function-control}, after choosing
		\(\varepsilon_0,\theta_0\) sufficiently small, the support function satisfies
		$|u - R_0|\le \eta_0$,
		where \(\eta_0=\eta_0(\varepsilon_0,\theta_0)\) can be made arbitrarily small.
		
		Setting $\alpha=R_0-2\eta_0$, it holds
		\begin{equation} \label{u-cte-bdd}
		\eta_0\le u-\alpha\le 3\eta_0.
		\end{equation}
		Next we work with a Tso-type function (cf. \cite{Tso}), that is, we consider
		\[
		Z=\frac{H}{u-\alpha}.
		\]
		We now compute the evolution of \(Z\). The equations from Lemma \ref{evol-eq} give by direct computation
		\begin{equation}
		\begin{aligned}
		(\partial_t-\Delta)Z
		=
		-\frac{\alpha \, Z+h}{u-\alpha}|A|^2
		-\frac{h}{u-\alpha} Z
		+2Z^2                                      
		+\frac{2}{u-\alpha}\langle\nabla u,\nabla Z\rangle .
		\end{aligned}
		\label{equation-Z}
		\end{equation}
		We observe that $Z_{\max}$ is always positive, while $Z_{\min}$ can have any sign in general. We want to derive a two-sided bound on $Z$ using the maximum principle. The role of the terms which occur in the equation for $Z$ is different depending on the sign both of $Z$ and of $h(t)$, therefore we need to treat different cases separately.
		
		Let us first consider a time $t$ at which $h(t) \geq 0$. 	At a point where the maximum of $Z$ is attained we can write 
		\[
		\frac{d}{dt}Z_{\max}
		\le
		2Z_{\max}^2 -\frac{\alpha \, Z_{\max}}{u-\alpha}|A|^2
		\leq 2Z_{\max}^2 -\frac{\alpha}{n} Z_{\max}^3 (u - \alpha) \leq Z_{\max}^2\Big(2 - \frac{\alpha \, \eta_0}{n} Z_{\max}\Big).
		\]
		We deduce
		\begin{equation} \label{bound-Zmax}
		\frac{d}{dt}Z_{\max} \leq 0 \quad \mbox{ if $h(t)\geq 0$ \quad and }
		Z_{\max}(t)\ge  \frac{2 n}{\alpha  \eta_0}.
		\end{equation}
		
		We now estimate the derivative of $Z_{\min}$ at a time with $h(t) \geq 0$. Since we want a bound from below, we can suppose that $Z_{\min} <0$. Using \eqref{u-cte-bdd}, we have
		\begin{equation} \label{H-bounds}
		|H|_{\max}\le 3 \, \eta_0 \,  |Z|_{\max} <  3\, \eta_0 \, (Z_{\max}-Z_{\min}),
		\end{equation}
		which by Lemma \ref{bound-h} yields	
		\[
		h \leq \frac1{R_0} \Big(n + 3 \, \eta_0^2(Z_{\max} - Z_{\min})\Big).
		\]
		At a point where \(Z=Z_{\min}\), we deduce from \eqref{equation-Z}
		\[
		\frac{d}{dt}Z_{\min}
		\ge
		\frac{|A|^2}{u - \alpha}\Big(-\frac{n}{R_0} - \frac{3 \, \eta_0^2}{R_0}(Z_{\max} - Z_{\min}) - \alpha Z_{\min}\Big).
		\]
		If we choose \(\eta_0\) sufficiently small so that $\alpha R_0-3\eta_0^2> \frac {\alpha R_0}{2}$, we obtain
		\[
		\frac{d}{dt}Z_{\min}
		\ge
		\frac{|A|^2}{(u - \alpha) R_0}\Big(-n - 3 \, \eta_0^2 Z_{\max}  +\frac{\alpha R_0}{2} (- Z_{\min})\Big).
		\]
		Then if \( -Z_{\min} > \frac{2}{\alpha R_0}(n+3\eta_0^2 Z_{\max}(t))\)  the right hand side above becomes positive. We conclude that
		\begin{equation} \label{bound-Zmin}
		\frac{d}{dt}Z_{\min} \geq 0 \quad \mbox{ if $h(t)\geq 0$ \quad and }
		Z_{\min}(t)\le  -\frac{2}{\alpha R_0} \Big(n+3\eta_0^2
		Z_{\max}(t) \Big).
		\end{equation}

		Next, consider a time $t$ at which $h(t) \leq 0$. We remark that at such a time we have $H_{\min}(t)<0$ and $Z_{\min}(t)<0$. We first estimate the derivative of $Z_{\min}$. At a point where \(Z=Z_{\min}<0\), we can discard the positive terms with $-\alpha Z |A|^2$ and $Z^2$ in the evolution equation \eqref{equation-Z} and obtain
		\[\frac{d}{dt}Z_{\min}
		\ge
		-\frac{h}{u-\alpha}\big(|A|^2 + Z_{\min}\big) \ge
		-\frac{h}{u-\alpha}\bigg(\frac{H^2}{n} + Z_{\min}\bigg) \geq -\frac{h Z_{\min}}{u-\alpha} \bigg(\frac{\eta_0^2 Z_{\min}}{n} + 1\bigg).
		\]
		We deduce
		\begin{equation} \label{bound-Zmin2}
		\frac{d}{dt}Z_{\min} \geq 0 \quad \mbox{ if $h(t)\leq 0$ \quad and }
		Z_{\min}(t)\le  -\frac{n}{\eta_0^2}.
		\end{equation}

		We now consider the derivative of
		\(Z_{\max}\) under the same hypothesis that $h(t) \leq 0$.
		Suppose further that
		\(Z_{\max}(t)\geq \frac{n}{\eta_0^2} > 0\). 
		Then, at a point where \(Z=Z_{\max}\), we have
		\[
		H=(u-\alpha)Z_{\max} \geq \eta_0 Z_{\max}\geq \frac{n}{\eta_0},
		\]
		as well as
		\[
		|A|^2\ge \frac{1}{n}H^2
		\geq \frac{u - \alpha}{\eta_0}Z_{\max} \geq Z_{\max}.
		\]
		Accordingly, we can write
		\begin{align*}
		\frac{d}{dt}Z_{\max}
		&\le
		\frac{|A|^2}{u-\alpha} (- \alpha Z_{\max}-h)
		-\frac{h}{u-\alpha}Z_{\max}
		+2Z_{\max}^2 
		\\ &\leq \frac{|A|^2}{u-\alpha} (- \alpha Z_{\max}-h) -\frac{h}{u-\alpha}  |A|^2 + 2 n \frac{|A|^2}{(u - \alpha)^2}
		\\ & \leq \frac{|A|^2}{u-\alpha} \bigg(\frac{2 n}{\eta_0} -2 h  - \alpha Z_{\max}\bigg).
		\end{align*}	
		By substitution of \eqref{H-bounds} into Lemma \ref{bound-h}, we obtain
		\[
		-2h
		\le
		-\frac{2n}{R_0}
		+
		\frac{6 \eta_0^2}{R_0}
		\left(Z_{\max}- Z_{\min}\right).
		\] 
		If we choose \(\eta_0\) sufficiently small so that $\alpha R_0-6\eta_0^2> \frac {\alpha R_0}{2}$, we obtain
		\begin{align*} \label{bound-Zmax}
		\frac{d}{dt}Z_{\max}
		&\le  \frac{|A|^2}{u-\alpha}\bigg( \frac{2n}{\eta_0}
		-
		\frac{2n}{R_0} - \frac{\alpha}{2} Z_{\max}
		-
		\frac{6\eta_0^2}{R_0}Z_{\min}\bigg).
		\end{align*}
		After discarding the negative term \(-2n/R_0\) in order to simplify the constants,  we get
		\begin{equation} \label{bound-Zmax2}
		\frac{d}{dt}Z_{\max} \leq 0 \quad \mbox{ if $h(t)\leq 0$ \quad and }
		Z_{\max}(t)\ge  
		\max\left\{
		\frac{n}{\eta_0^2},
		\frac{2}{\alpha}
		\left(
		\frac{2n}{\eta_0}
		- \frac{6\eta_0^2}{R_0}Z_{\min}(t)
		\right)
		\right\}.
		\end{equation}
		
		Let us now define
		$$
		\gamma_1=\max\bigg\{ \frac{2n}{\alpha\eta_0}, \frac{n}{\eta_0^2},  \frac{4n}{\alpha \eta_0}
		\bigg\}, \qquad \delta_1= \frac{12 \eta_0^2}{\alpha R_0}.
		$$
		$$
		\gamma_2=
		\max\bigg\{ \frac{2n}{\alpha R_0}, \frac{n}{\eta_0^2}\bigg\},
		\qquad \qquad \delta_2= \frac{6 \eta_0^2}{\alpha R_0}.
		$$
		From \eqref{bound-Zmax},
		\eqref{bound-Zmin},
		\eqref{bound-Zmin2} and
		\eqref{bound-Zmax2}
		we see that, regardless of the sign of $h(t)$, we have
		\begin{equation}\label{ineq-zmax}
		\frac{d}{dt}Z_{\max} \leq 0 \quad \mbox{ if  } 
		Z_{\max} \geq \gamma_1-\delta_1 [Z_{\min}(t)]_-
		\end{equation}
		\begin{equation}\label{ineq-zmin}
		\frac{d}{dt}Z_{\min} \geq 0 \quad \mbox{ if  } 
		Z_{\min} \leq -\gamma_2-\delta_2 Z_{\max}(t),
		\end{equation}
		where $[Z_{\min}(t)]_-=\min\{Z_{\min}(t),0\}$. 
		
		Since $\eta_0$ can be made arbitrarily small by an appropriate choice of the initial parameters $\varepsilon_0, \theta_0$, we assume that it is chosen in such a way that $\delta_1<1,\delta_2<1$.  This allows us to take
		a constant  \(\Lambda>0\) such that
		$$
		-\Lambda < Z_{\min}(0) \leq Z_{\max}(0) < \Lambda,
		$$
		$$
		\Lambda> \gamma_1+\delta_1 \Lambda, \qquad
		-\Lambda<-\gamma_2-\delta_2 \Lambda.
		$$
		With this choice, we obtain   $$
		-\Lambda < Z_{\min}(t) \leq Z_{\max}(t) < \Lambda
		$$
		for every time for which the solution is defined. In fact, at a first time where either  $Z_{\min}(t)=-\Lambda$, or  $Z_{\max}(t)=\Lambda$, a standard maximum principle argument yields a contradiction with \eqref{ineq-zmax} or \eqref{ineq-zmin}.
		
		Finally, from \eqref{u-cte-bdd} we reach
		\[
		|H| = |(u-\alpha)Z| \le3\eta_0 \Lambda,
		\]
		which proves the uniform bound for the mean curvature. Therefore there exists a constant
		\[
		C=C\bigl(n,R_0,\varepsilon_0,\theta_0,
		\|H(\cdot,0)\|_{L^\infty(M_0)}\bigr)
		\]
		such that \(|H|\le C\) on \(M_t\).
	\end{proof}
	
	Notice that by the lack of convexity assumptions, we cannot conclude that $|A|^2$ is bounded from the above uniform control of the mean curvature. We need a separate extra argument inspired by \cite{EckerHuiskenInterior} to reach the conclusion.
	\begin{prop}[Shape operator is bounded along the flow] \label{shape-bdd}
		Under the same choice of parameters as in Proposition \ref{bound-H}, there exists a uniform constant bounding $|A|^2$ for all times such that the solution is defined.
	\end{prop}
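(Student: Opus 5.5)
We will follow the Ecker--Huisken interior-estimate strategy and apply the maximum principle to a product $\varphi(v)\,|A|^2$. Here $v$ is the reciprocal of the normalized support function, $v:=|x|/u=\langle \nu, x/|x|\rangle^{-1}$, and $\varphi(s)=s^2/(1-ks^2)$ for a suitable constant $k>0$. By Proposition~\ref{prop:later-C^1-closeness}, $1\le v\le v_0:=\bigl(1-(c_0/(R_0-2c_0))^2\bigr)^{-1/2}$ along the whole flow, and $v_0\to1$ as $\varepsilon_0+\theta_0\to0$. We therefore choose $k$ so that $1-kv_0^2\ge \tfrac12$ (for instance $k=1/(2v_0^2)$). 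An alternative, which may be simpler, is to work with $u$ directly: the function $w=u^{-1}$ or $(u-\alpha)^{-1}$, with $\alpha=R_0-2\eta_0$ as in Proposition~\ref{bound-H}, is the analogue of Ecker--Huisken's gradient function $\langle\nu,\omega\rangle^{-1}$. By Lemma~\ref{evol-eq}(b) it has a good evolution with a favourable $-|A|^2$ term. Concretely, I would take $G=|A|^2/(u-\alpha)^2$, or $|A|^2\varphi(1/(u-\alpha))$, and compute $(\partial_t-\Delta)$ of it.

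The first step is the computation. Lemma~\ref{evol-eq}(b) gives
\[
(\partial_t-\Delta)(u-\alpha)^{-1}=-\frac{h-2H}{(u-\alpha)^2}-\frac{|A|^2u}{(u-\alpha)^2}-\frac{2|\nabla u|^2}{(u-\alpha)^3}.
\]
Lemma~\ref{evol-eq}(c) gives
\[
(\partial_t-\Delta)|A|^2\le-2|\nabla A|^2+2|A|^4+2|h|\,|A|^3.
\]
For $G=|A|^2\psi$ with $\psi=(u-\alpha)^{-p}$ we obtain a reaction term of the form
\[
|A|^4\psi\Bigl(2-\frac{p\,u}{u-\alpha}\Bigr)+|A|^4\psi\,\frac{p(p+1)}{(u-\alpha)^2}\cdot\frac{|\nabla u|^2}{|A|^2}.
\]
Since $u/(u-\alpha)\ge R_0/(3\eta_0)$ is huge, already $p=1$ makes the bracket $\le -R_0/(3\eta_0)+2<0$. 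The gradient term is controlled using $\nabla_iu=h_i^j\langle x,e_j\rangle$, so that $|\nabla u|\le|A|\,|x^\top|\le c_0|A|$ by \eqref{eq:tangential-position-bound}. Then $p(p+1)|\nabla u|^2/(u-\alpha)^2\le p(p+1)c_0^2|A|^2/\eta_0^2$, and with $c_0=\eta_0/3$ this gives the coefficient $\tfrac{2}{9}|A|^2$. That is a bounded multiple, dominated by $pu/(u-\alpha)$. The cross term $-2\langle\nabla\psi,\nabla|A|^2\rangle/\psi$ is handled as usual by writing it as $\langle\nabla\psi,\nabla G\rangle$-terms plus $|\nabla\psi|^2|A|^2/\psi$, together with Kato's inequality $|\nabla|A||\le|\nabla A|$; the latter term is again bounded by $c_0^2|A|^4/\eta_0^2$ times a constant.

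The second step is to assemble the pieces. At a maximum of $G$ one gets
\[
\frac{d}{dt}G_{\max}\le -\frac{c}{\eta_0}\,\frac{|A|^4}{(u-\alpha)}+C|h|\,|A|^3\psi+C|H|\,|A|^2\psi+C|h|\,|A|^2\psi .
\]
Lemma~\ref{bound-h} and Proposition~\ref{bound-H} bound $|h|$ and $|H|$ uniformly. The negative quartic term then dominates once $G$ is large, so $G\le\max\{G_{\max}(0),C\}$. Since $u-\alpha\in[\eta_0,3\eta_0]$, this yields a uniform bound on $|A|^2$.

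The main obstacle is the precise bookkeeping of the gradient terms: the cross term $\langle\nabla\psi,\nabla|A|^2\rangle$ and the $|\nabla u|^2$ contribution must not overwhelm the favourable $-pu\,|A|^4/(u-\alpha)$ term. I would try to close this first using the sharp bound $|\nabla u|\le c_0|A|$ and $c_0=\eta_0/3$. The ratio $c_0^2/\eta_0^2$ is then a fixed small number, independent of smallness of parameters, while $u/(u-\alpha)\sim R_0/\eta_0$ grows. This is where the smallness of $\varepsilon_0,\theta_0$ enters. If it fails with $u-\alpha$, the fallback is the $\varphi(v)$ weight with $v=|x|/u$, imitating Ecker--Huisken exactly and absorbing the extra radial terms via the annulus bounds.
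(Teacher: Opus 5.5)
Your proof is correct, but your main line takes a different route from the paper. The paper's proof uses the Ecker--Huisken quantity $\mathscr F=|A|^2\varphi(v^2)$ with $v=\langle\nu,x/|x|\rangle^{-1}$, which is essentially your fallback. Because the reference direction $x/|x|$ varies in space, the paper must absorb the radial drift $\frac2r\langle\nabla r,\nabla v\rangle$ and the remainder $\mathfrak R$ into $-2\varphi|\nabla A|^2$ by Kato and Young, and its good term $-2k\mathscr F^2$ has only a bounded coefficient. You instead reuse the Tso-type weight $(u-\alpha)^{-1}$ from Proposition~\ref{bound-H}, whose evolution carries the reaction term $|A|^2u$.

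The bookkeeping you flagged as the main obstacle is in fact trivial. The $|\nabla u|^2$ term coming from $(\partial_t-\Delta)(u-\alpha)^{-p}$ is $-p(p+1)(u-\alpha)^{-p-2}|A|^2|\nabla u|^2$, with the favourable sign (you wrote $+$). Together with $2|A|^2|\nabla\psi|^2/\psi$ from the cross term, it leaves $p(p-1)(u-\alpha)^{-p-2}|A|^2|\nabla u|^2$, which vanishes for $p=1$. Writing $w=u-\alpha$ and $G=|A|^2/w$, the quotient rule gives the exact identity
\[
(\partial_t-\Delta)G=-\frac{2|\nabla A|^2}{w}+(u-2\alpha)\,G^2-\frac{2h\operatorname{tr}(A^3)}{w}+\frac{2H-h}{w}\,G+\frac{2}{w}\langle\nabla u,\nabla G\rangle,
\]
where $u-2\alpha\le -R_0+5\eta_0<0$ once $\eta_0<R_0/5$. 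For $p>1$, your bound $|\nabla u|\le|x^\top||A|\le c_0|A|$ correctly controls the leftover term. A minor point: the correct bound is $u/(u-\alpha)\ge(R_0-\eta_0)/(3\eta_0)$, not $R_0/(3\eta_0)$, which is harmless.

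What your route buys is brevity and a reaction coefficient of size $R_0$, large compared with the scale $\eta_0$. Since $|A|^2\le 3\eta_0G_{\max}$ everywhere, you have $|h|,|H|\le\sqrt{3n\eta_0G_{\max}}$. Hence the cubic term is at most $6\sqrt n\,\eta_0G_{\max}^2$ and the linear one is $O(\eta_0^{-1/2}G_{\max}^{3/2})$. The estimate therefore closes even without invoking Proposition~\ref{bound-H}. The same computation also yields the initial-curvature-independent bound $G\le C(1+1/t)$ needed in Section~\ref{sec:lip-euclidean}, without the constraint $\frac{\sqrt n}{1+\sqrt n}<k$. The paper's route has the merit of following the standard Ecker--Huisken computation, and its weighted quantity $\mathscr F$ is then reused unchanged for the space-form and Lipschitz cases.
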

	
	\begin{proof}
		For $x \in M_t$, set
		\[
		r=|x|,
		\qquad
		\omega=\frac{x}{|x|},
		\qquad
		q=\langle\nu,\omega\rangle,
		\qquad
		v=q^{-1}.
		\]
		By Proposition \ref{prop:later-C^1-closeness}, after choosing
		\(\varepsilon_0,\theta_0\) sufficiently small, there exist constants
		\(
		0<r_*<r^*<\infty\), $\sigma_*>0$,
		such that along the flow it holds
		\begin{equation}\label{eq:r-v-bounds}
		r_*\le r\le r^*,
		\qquad
		q=\langle\nu,\omega\rangle\ge\sigma_*. \qquad \text{Hence}  \qquad 	1\le v\le v_*:=\sigma_*^{-1}.
		\end{equation}

		We first record the evolution of \(v\). Since
		$u=rq$,
		the evolution from Lemma \ref{evol-eq} (b)
		and
		\[
		(\partial_t-\Delta)r
		=
		hq-\frac{n-1+q^2}{r}
		\]
		give
		\[
		\begin{aligned}
		(\partial_t-\Delta)q
		={}&
		|A|^2q
		+\frac{h}{r}(1-q^2)
		-\frac{2H}{r}
		+\frac{q(n-1+q^2)}{r^2}+
		\frac{2}{r}\langle\nabla r,\nabla q\rangle.
		\end{aligned}
		\]
		Since \(v=q^{-1}\), it follows that
		\begin{equation}\label{eq:v-schematic}
		(\partial_t-\Delta)v
		=
		-|A|^2v
		-2v^{-1}|\nabla v|^2
		+\frac{2}{r}\langle\nabla r,\nabla v\rangle + \frak R,
		\end{equation}
		where
		\[
		\frak R=
		\frac{2H}{r}v^2
		-\frac{h}{r}(v^2-1)
		-\frac{n-1}{r^2}v
		-\frac{1}{r^2v}.
		\]
		Notice  that the last two terms in the definition of \(\frak R\) are
		nonpositive. By Proposition \ref{bound-H},
		\(
		|H|+|h|\le C\).
		Together with \eqref{eq:r-v-bounds}, this implies $|\frak R|\le C$. Hereafter $C$ is a uniform constant depending on initial values, whose concrete meaning may change from line to line.

		Choose \(k>0\) so that
		\(
		kv_*^2<1,
		\)
		and define
		\begin{equation} \label{def-EH}
		\mathscr F=|A|^2\varphi(v^2), \qquad \text{with} \qquad	\varphi(s)=\frac{s}{1-ks}.
		\end{equation}
		
		By \eqref{eq:r-v-bounds}, there are positive constants
		\(c_\varphi,C_\varphi\) such that
		\begin{equation}\label{eq:phi-equivalence}
		c_\varphi
		\le\varphi(v^2)
		\le C_\varphi.
		\end{equation}
		Thus a uniform bound for \(\mathscr F\) is equivalent to a uniform bound for
		\(|A|^2\). Moreover, \(\varphi'(v^2)\) and \(\varphi''(v^2)\) are uniformly bounded. 	
		
		Combining \eqref{eq:v-schematic} and Lemma~\ref{evol-eq}(c), and carrying
		out a computation similar to \cite[Th. 3.1]{EckerHuiskenInterior} while retaining the negative
		gradient term coming from the evolution of \(|A|^2\), we obtain
		\begin{align}
		(\partial_t-\Delta)\mathscr F
		\le{}&
		-2\varphi(v^2)|\nabla A|^2
		-2k\mathscr F^2                                      -
		\frac{2k}{(1-kv^2)^2}
		|\nabla v|^2\mathscr F                               -
		\frac{2\varphi'(v^2)}{\varphi(v^2)}
		\left\langle\nabla v^2,\nabla\mathscr F\right\rangle \notag\\
		&+
		\frac{4}{r}|A|^2v\varphi'(v^2)
		\langle\nabla r,\nabla v\rangle                     -
		2h\,\operatorname{tr}(A^3)\varphi(v^2)
		+
		2|A|^2v\varphi'(v^2)\mathfrak R.
		\label{eq:evolution-mathscr-F-before-drift}
		\end{align}

		We first estimate the term containing \(\mathfrak R\). By the uniform
		boundedness of \(\mathfrak R\), \(v\), and \(\varphi'(v^2)\), and by
		\eqref{eq:phi-equivalence}, we have
		\begin{equation}\label{eq:R-contribution}
		\begin{aligned}
		2|A|^2v\varphi'(v^2)\mathfrak R
		&\le
		C|A|^2\le
		C\mathscr F.
		\end{aligned}
		\end{equation}
		
		We next rewrite the radial-gradient term as a drift term for
		\(\mathscr F\). Since
		\[
		\nabla\mathscr F
		=
		\varphi(v^2)\nabla|A|^2
		+
		2|A|^2v\varphi'(v^2)\nabla v,
		\]
		we have
		\begin{align}
		&
		\frac{4}{r}|A|^2v\varphi'(v^2)
		\langle\nabla r,\nabla v\rangle
		-
		\frac{2}{r}
		\langle\nabla r,\nabla\mathscr F\rangle                    
		=
		-\frac{2}{r}\varphi(v^2)
		\langle\nabla r,\nabla|A|^2\rangle.
		\label{eq:radial-drift-rewriting}
		\end{align}
		Using $|\nabla r|\le1$, Kato's inequality $|\nabla|A|^2|
		\le2|A||\nabla A|$ and  $
		r\ge r_*>0$,
		we obtain, for every \(\delta>0\), by Young's inequality
		\begin{align}
		\left|
		\frac{2}{r}\varphi(v^2)
		\langle\nabla r,\nabla|A|^2\rangle
		\right|
		&\le
		C\varphi(v^2)|A||\nabla A|                         \le
		\delta\varphi(v^2)|\nabla A|^2
		+
		C_\delta\mathscr F.
		\label{eq:radial-remainder-estimate}
		\end{align}
		Choose, for instance, \(\delta=1\). Combining
		\eqref{eq:radial-drift-rewriting} and
		\eqref{eq:radial-remainder-estimate} with the first term in
		\eqref{eq:evolution-mathscr-F-before-drift}, we obtain
		\begin{align*}
		&
		-2\varphi(v^2)|\nabla A|^2
		+
		\frac{4}{r}|A|^2v\varphi'(v^2)
		\langle\nabla r,\nabla v\rangle \le
		\frac{2}{r}
		\langle\nabla r,\nabla\mathscr F\rangle
		+
		C\mathscr F,
		\end{align*}
		after removing the non-positive term $-\varphi(v^2)|\nabla A|^2$. Hence
		\eqref{eq:evolution-mathscr-F-before-drift} and
		\eqref{eq:R-contribution} imply
		\begin{align}
		(\partial_t-\Delta)\mathscr F
		\le
		-2k\mathscr F^2
		+C\mathscr F                                      -
		\frac{2\varphi'(v^2)}{\varphi(v^2)}
		\left\langle\nabla v^2,\nabla\mathscr F\right\rangle
		+
		\frac{2}{r}
		\left\langle\nabla r,\nabla\mathscr F\right\rangle -
		2h \operatorname{tr}(A^3)\varphi(v^2),
		\label{eq:evolution-mathscr-F-reduced}
		\end{align}
		where we have discarded the term involving 
		\(|\nabla v|^2\mathscr F\) because it is nonpositive. 
		We still need to control the last cubic term. As $|\operatorname{tr}(A^3)|\le |A|^3$
		and \(h\) is uniformly bounded, we get
		\[
		\begin{aligned}
		\left|
		-2h\,\operatorname{tr}(A^3)\varphi(v^2)
		\right|
		&\le
		C|A|^3\varphi(v^2)=
		C\mathscr F^{3/2}\varphi(v^2)^{-1/2} \le
		C\mathscr F^{3/2} \le
		\varepsilon\mathscr F^2+C_\varepsilon,
		\end{aligned}
		\]
		which follows by Young's inequality for every \(\varepsilon>0\). Choosing \(0<\varepsilon<k\), this contribution is absorbed into the
		negative term \(-2k\mathscr F^2\) and we reach
		\begin{equation}\label{eq:final-mathscr-F-43}
		\begin{aligned}
		(\partial_t-\Delta)\mathscr F
		\le{}&
		-c\mathscr F^2+C \mathscr F+ \widetilde C                                             -
		\frac{2\varphi'(v^2)}{\varphi(v^2)}
		\left\langle\nabla v^2,\nabla\mathscr F\right\rangle
		+
		\frac{2}{r}
		\left\langle\nabla r,\nabla\mathscr F\right\rangle,
		\end{aligned}
		\end{equation}
		for some uniform constants \(c>0\) and \(C, \widetilde C<\infty\).
		
		At a spatial maximum point of \(\mathscr F\),  both gradient terms in
		\eqref{eq:final-mathscr-F-43} vanish. Therefore, 
		\begin{equation} \label{Fmax-def}
		\frac{d}{dt}\mathscr F_{\max}
		\le
		-c\mathscr F_{\max}^2
		+C \mathscr F_{\max}
		+\widetilde C.
		\end{equation}
		The right-hand side is negative whenever \(\mathscr F_{\max}\) is
		sufficiently large. The maximum principle therefore gives a uniform bound
		for \(\mathscr F\). Finally, by \eqref{eq:phi-equivalence}, we conclude
		\[
		|A|^2
		=
		\frac{\mathscr F}{\varphi(v^2)}
		\le
		\frac{\mathscr F}{c_\varphi}.
		\]
		It follows that $\sup_{M_t}|A|^2\le C$
		for every time for which the solution is defined.\end{proof}
	
	\section{Long-time existence and exponential convergence} \label{LTEconv}
	
	We complete the proof of Theorem~\ref{thm:main}.  Throughout the section, we assume that \eqref{Haus} and \eqref{nor-osc} hold, with constants satisfying the same smallness assumptions as in our previous results. We first use the curvature estimate
	to obtain long-time existence and higher-order regularity.
	
	\begin{prop}\label{prop:long-time-existence}
		The solution of \eqref{vp-mcf} exists for every
		\(t\ge0\). Moreover, for every integer \(m\ge0\) and every \(\tau>0\), there
		exists a constant \(C_{m,\tau}\) such that
		\[
		\sup_{M_t}|\nabla^mA|\le C_{m,\tau}
		\qquad\text{for every }t\ge\tau.
		\]
	\end{prop}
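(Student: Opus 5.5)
We argue by continuation, combined with standard higher-order estimates for mean curvature type flows. Let \([0,T_{\max})\) be the maximal existence interval of the smooth solution; short-time existence and uniqueness for \eqref{vp-mcf} hold since \(M_0\) is smooth and closed, and the nonlocal term \(h(t)\) is a lower-order perturbation. Suppose, for contradiction, that \(T_{\max}<\infty\). By Proposition~\ref{bound-H} and Proposition~\ref{shape-bdd}, \(|H|\), \(|h|\) and \(|A|^2\) are bounded on \([0,T_{\max})\) by constants independent of \(T_{\max}\). Moreover, Proposition~\ref{prop:later-C^1-closeness} confines \(M_t\) to the fixed annulus \eqref{eq:hausdorff-closeness} and keeps it uniformly star-shaped. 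This lets us write \(M_t\) as a radial graph \(\rho(\cdot,t)\) over \(\mathbb S^n\), with \(\rho\) and \(|\nabla\rho|\) uniformly bounded.

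The next step is the higher-derivative estimates. From Lemma~\ref{evol-eq}(c) and the commutator formulas, one obtains as in Huisken's work
\[
(\partial_t-\Delta)|\nabla^mA|^2\le -2|\nabla^{m+1}A|^2+\sum_{i+j+k=m}\nabla^iA*\nabla^jA*\nabla^kA*\nabla^mA + h\sum_{i+j=m}\nabla^iA*\nabla^jA*\nabla^mA .
\]
Here \(h\) depends only on \(t\), so it behaves like a bounded coefficient. Given the uniform bound on \(|A|\) and \(|h|\), we proceed by induction on \(m\) and apply the maximum principle to a Shi-type quantity
\[
G_m=t^m|\nabla^mA|^2+\beta_m t^{m-1}|\nabla^{m-1}A|^2 .
\]
This yields bounds \(|\nabla^mA|\le C_{m,\tau}\) for \(t\ge\tau\) on \([\tau,\tau+1]\)-type windows. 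Restarting the argument at each time \(t_0-\tau\) gives constants depending only on \(m\), \(\tau\) and the uniform curvature bound, hence valid for all \(t\ge\tau\). Near \(t=0\) we instead use the smooth initial data, which bounds all derivatives on \([0,\tau]\) for any finite horizon.

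For the continuation, assume \(T_{\max}<\infty\). The bounds on all \(\nabla^mA\), together with the graph representation, give uniform \(C^k\) bounds on \(\rho(\cdot,t)\); alternatively, they bound \(\partial_t\) of the parametrization and its derivatives. Then \(X(\cdot,t)\) converges in \(C^\infty\) as \(t\to T_{\max}\) to a smooth embedded hypersurface. Embeddedness is kept because the hypersurface remains a radial graph with \(\rho\ge R_0-2c_0\). Restarting the flow from this limit contradicts maximality, so \(T_{\max}=\infty\).

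The main obstacle is making the derivative estimates uniform in time despite the nonlocal term and the lack of any sign information. For this I would emphasize that \(h\) enters only linearly, with a bounded coefficient, so every argument from the local MCF theory, such as Ecker--Huisken or Shi-type interior-in-time estimates, carries over verbatim. Locality in time then gives constants independent of \(t\).
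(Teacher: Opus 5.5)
Your proposal is correct and follows essentially the same route as the paper: the uniform bounds on $|h|$ and $|A|^2$ from Propositions~\ref{bound-H} and~\ref{shape-bdd}, then Bernstein/Shi-type estimates for $\nabla^mA$, then the continuation criterion to rule out $T_{\max}<\infty$. The Bernstein step works because $h$ is spatially constant and bounded, so no derivatives of $h$ appear. Your extra details --- the explicit quantity $G_m$ applied on time windows of bounded length, and the direct $C^\infty$ extension of $X(\cdot,t)$ to $T_{\max}$ --- simply unpack what the paper cites as standard (and the separate bound on $[0,\tau]$ is not needed, since the statement only concerns $t\ge\tau$).
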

	
	\begin{proof}
		By Propositions~\ref{bound-H}  and ~\ref{shape-bdd}, we have uniform bounds
			\[
			|h(t)| \leq \sup_{M_t}|H|\le C, \qquad
			\sup_{M_t}|A|^2\le C
			\]
			on the maximal time interval of existence.

		The evolution equations for the covariant derivatives of the second
		fundamental form have the usual form
		\[
		(\partial_t-\Delta)\nabla^mA
		=
		\sum_{i+j+k=m}
		\nabla^iA*\nabla^jA*\nabla^kA
		+h\sum_{i+j=m}\nabla^iA*\nabla^jA,
		\]
		where the symbol \(*\) denotes a linear combination of contractions with
		respect to the induced metric. Since \(h=h(t)\) is spatially constant, no
		spatial derivatives of \(h\) occur.
		The standard Bernstein argument for mean curvature flow therefore gives, for
		every \(m\ge1\) and every \(\tau>0\),
		\[
		\sup_{M_t}|\nabla^mA|
		\le C_{m,\tau},
		\qquad t\ge\tau;
		\]
		see, for example, \cite{Hu87,EckerHuiskenInterior}. The constants
		depend on the uniform bounds for \(|A|\), \(h\), and on \(\tau\).
		
		The continuation criterion for the flow implies that a finite maximal time can
		occur only if
		\[
		\limsup_{t\nearrow T_{\max}}
		\sup_{M_t}|A|=\infty.
		\]
		This is excluded by Proposition~\ref{shape-bdd}. Hence
		$T_{\max}=\infty.$
	\end{proof}

	\begin{prop}[Eventual convexity and exponential convergence]
		\label{prop:euclidean-eventual-convexity}
		There exist \(T<\infty\) and \(p_\infty\in\mathbb R^{n+1}\) such that
		\(M_t\) is strictly convex for every \(t\ge T\), and
		\[
		M_t\longrightarrow \partial B_{R_0}(p_\infty)
		\]
		exponentially in the smooth topology as \(t\to\infty\). In addition, the limit centre point satisfies 
		\begin{equation}\label{eq:explicit-limiting-centre-bound}
		|p_\infty|
		\le
		\frac{R_0}{2}
		\Phi(\varepsilon_0,\theta_0).
		\end{equation}
	\end{prop}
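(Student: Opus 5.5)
The plan is to first obtain subsequential smooth convergence from area monotonicity, then identify every subsequential limit as $\partial B_{R_0}(p)$ with $p$ controlled by the preserved reflection property, and finally use the uniform convexity of a late time slice to invoke a classical convergence theorem.

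\emph{Area monotonicity.} Since the flow is volume preserving,
\[
\frac{d}{dt}|M_t|=-\int_{M_t}(H-h)^2\,d\mu_t\le 0 .
\]
The area is bounded below, so $\int_0^\infty\!\int_{M_t}(H-h)^2\,d\mu_t\,dt<\infty$.

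\emph{Subsequential limits.} Proposition~\ref{prop:later-C^1-closeness} shows that each $M_t$ is a radial graph over $\mathbb S^n$ with uniformly bounded height and gradient. Proposition~\ref{prop:long-time-existence} gives bounds on all derivatives of $A$ for $t\ge1$. By Arzel\`a--Ascoli, every sequence $t_k\to\infty$ therefore has a subsequence along which $M_{t_k+s}$ converges smoothly on, say, $s\in[0,1]$. The integrability of $\int(H-h)^2$ forces $H\equiv h_\infty$ on the limit. The limit is a smooth, closed, embedded, star-shaped hypersurface with constant mean curvature, hence a round sphere by Alexandrov's theorem. Volume is preserved and passes to the limit, so its radius is $R_0$; call the limit $\partial B_{R_0}(p)$.

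\emph{Locating the centre.} By Theorem~\ref{thm:CG} and Proposition~\ref{reflect}, $M_t$ reflects up to $\Pi_{c,e}$ for every $e$ and every $c$ with $2c/R_0>\Phi(\varepsilon_0,\theta_0)$. The non-strict domain inclusion \eqref{eq:reflection-domain-inclusion} is closed under smooth convergence, so the limit ball also reflects across $\Pi_{c,e}$. A ball $B_{R_0}(p)$ reflects across $\Pi_{c,e}$ only if $\langle p,e\rangle\le c$. Taking $e=p/|p|$ and letting $c\searrow \tfrac{R_0}{2}\Phi$ gives \eqref{eq:explicit-limiting-centre-bound} for every subsequential limit.

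\emph{Eventual convexity.} Suppose $M_t$ were not strictly convex at some arbitrarily late times. Pick a sequence of such times and extract a smoothly convergent subsequence. The limit sphere has all principal curvatures equal to $1/R_0$, so the approximating slices are strictly convex for large $k$, a contradiction. The same argument gives, for every $\delta>0$, a time $T$ such that for $t\ge T$ every $M_t$ is $\delta$-close in $C^2$ to some sphere $\partial B_{R_0}(p)$. In particular, every late slice has pinched, uniformly positive curvature.

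\emph{Exponential convergence.} Restart the flow at a late time $T$, where $M_T$ is uniformly convex and close to a round sphere. Huisken's theorem \cite{Hu87} (for $n\ge2$; Gage \cite{Gage1986} for curves), or alternatively the $h^{1+\alpha}$-stability of Escher--Simonett \cite{EsSi98}, yields exponential smooth convergence to a single sphere. By uniqueness, this is $\partial B_{R_0}(p_\infty)$.

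\emph{Main obstacle.} The delicate point is passing from subsequential convergence to convergence of the whole flow with a uniquely determined centre, since the centres could a priori drift along different subsequences. I would handle this entirely by entering the classical regime at one late time, where the cited theorems already give uniqueness of the limit and the exponential rates. Convexity at that time comes from the compactness argument above, and the centre estimate holds for $p_\infty$ because it holds for every subsequential limit.
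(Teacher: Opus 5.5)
Your proposal is correct and follows essentially the same route as the paper. Both arguments use area dissipation, smooth subsequential convergence of the radial graphs to a constant-mean-curvature hypersurface identified as $\partial B_{R_0}(p)$ by Alexandrov's theorem and volume preservation, a restart at a uniformly convex late slice to invoke Huisken (Gage for curves), and the passage of the preserved reflection thresholds $c>\tfrac{R_0}{2}\Phi(\varepsilon_0,\theta_0)$ to the limit. The differences are minor and both versions are valid: you get convexity of all late slices directly from compactness along arbitrary time sequences (via the space-time limit on $[t_k,t_k+1]$) rather than from Huisken's preservation of convexity, and you read off $|p_\infty|\le c$ from the reflection property of the limit ball rather than by passing the radial oscillation bound $r_+-r_-\le 2c$ to the limit.
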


	\begin{proof}
		The enclosed volume is preserved along the flow, while the area satisfies
		\begin{equation}\label{eq:area-dissipation}
		\frac{d}{dt}|M_t|
		=
		-\int_{M_t}(H-h)^2\,d\mu_t.
		\end{equation}
		Since the area is nonincreasing and bounded from below, integration of
		\eqref{eq:area-dissipation} gives
		\begin{equation}\label{eq:finite-dissipation}
		\int_0^\infty
		\int_{M_t}(H-h)^2\,d\mu_t\,dt
		<\infty.
		\end{equation}
		Consequently, there exists a sequence \(t_j\to\infty\) such that
		\begin{equation}\label{eq:vanishing-dissipation-sequence}
		\int_{M_{t_j}}(H-h)^2\,d\mu_{t_j}
		\longrightarrow0.
		\end{equation}
		
		By Proposition \ref{prop:later-C^1-closeness}, the hypersurfaces remain in a fixed compact
		annulus and are uniformly star-shaped. In particular, they can be written as
		radial graphs
		\[
		M_t
		=
		\left\{
		r(\vartheta,t)\vartheta:
		\vartheta\in\mathbb S^n
		\right\},
		\]
		where \(r\) is bounded above and below by positive constants and has a
		uniform gradient bound. Proposition~\ref{prop:long-time-existence} and the
		relations between \(A\) and the derivatives of a radial graph then give
		uniform bounds for all derivatives of \(r(\cdot,t)\), for \(t\ge1\).
		
		It follows from the Arzelà--Ascoli theorem that, after passing to a subsequence,
		\[
		r(\cdot,t_j)\longrightarrow r_\infty
		\]
		smoothly. Thus \(M_{t_j}\) converges to a smooth, closed, connected
		and embedded hypersurface \(M_\infty\).
		
		The sequence \(h(t_j)\) is bounded. After passing to a further subsequence, we
		may assume that
		\[
		h(t_j)\longrightarrow h_\infty.
		\]
		The smooth convergence and \eqref{eq:vanishing-dissipation-sequence} imply
		\[
		H_{M_\infty}\equiv h_\infty.
		\]
		Thus \(M_\infty\) is a closed embedded hypersurface with constant mean
		curvature. By Alexandrov's theorem when $n\geq2$, and by the classification of closed planar curves with constant curvature when $n=1$, it is a round sphere.
		
		The $C^2$-convergence to a round sphere implies that \(M_{t_{j}}\) is uniformly strictly convex for large enough $j$. If ${t_{j_0}}$ is a time at which our hypersurface is convex,  we conclude by Huisken's theorem \cite{Hu87}, that the entire flow \(M_t\) remains convex for $t \geq {t_{j_0}}$ and converges exponentially in the smooth topology to a round sphere as $t \to \infty$, which necessarily coincides with the limiting sphere  \(M_\infty\) along the subsequence. The corresponding convex convergence result for $n=1$ is due to Gage \cite{Gage1986}, while the exponential decay  follows from Gage's estimate for the isoperimetric deficit \cite{Gage1986}, combined with the uniform parabolic estimates for the flow; see also \cite{ChaoLingWang2013}.
		
		By the preservation of the enclosed volume, the radius of the limiting sphere is \(R_0\). Hence
		\[
		M_\infty
		=
		\partial B_{R_0}(p_\infty)
		\]
		for some \(p_\infty \in\mathbb R^{n+1}\). Finally, for every sufficiently small $\delta>0$, Proposition~\ref{reflect} applies with $c_\delta:=\frac{R_0}{2}\Phi(\varepsilon_0,\theta_0)+\delta$. Hence $c_\delta$ is a common reflection threshold for $M_0$, and the corresponding radial oscillation estimate \eqref{eq:radial-oscillation} satisfied by any $M_t$ passes to the smooth limit:  
		$$
		2 |p_\infty| = \max_{M_\infty}|x| - \min_{M_\infty}|x| \leq 2c_\delta
		$$
		Letting $\delta\searrow0$, this concludes the proof.
	\end{proof}

	\section{The result in non-flat space forms}
	\label{sec:space-forms}
	
	\subsection{Geometric \(C^1\)-closeness in space forms}
	
	We now turn to the analysis of the volume preserving mean curvature flow \eqref{vp-mcf} in space forms.
	Let \(K\in\{-1,1\}\), and let \(N_K^{n+1}\) denote the simply connected space form of constant sectional curvature \(K\):
	\[
	N_{-1}^{n+1}=\mathbb H^{n+1},
	\qquad
	N_1^{n+1}=\mathbb S^{n+1}.
	\]
	We fix a point \(o\in N_K^{n+1}\) and write
	\[
	r(x)=\dist_{N_K}(o,x).
	\]
	
	Let \(R_0>0\) be determined by
	\begin{equation}\label{eq:space-form-reference-volume}
	\operatorname{vol}(\Omega_0)
	=
	\operatorname{vol}\bigl(B_{R_0}^K(o)\bigr),
	\end{equation}
	where $\Omega_0$, as before, is the region enclosed by the initial data. 
	In the spherical case, all hypersurfaces under consideration will be
	contained in a fixed open hemisphere centred at \(o\). In particular, the
	reference radius satisfies $0<R_0<\frac{\pi}{2}$.

	We use the notation
	\begin{equation}
	\label{sine-cosine}
	s_K(r)=
	\begin{cases}
	\sinh r,&K=-1,\\
	\sin r,&K=1,
	\end{cases}
	\qquad
	c_K(r)= s_K'(r).
	\end{equation}
	These functions satisfy $c_K'=-Ks_K$, $c_K^2+Ks_K^2=1$.
	
	We denote by
	\[
	\mathbb S_o^n
	=
	\left\{
	v\in T_oN_K^{n+1}:|v|=1
	\right\}
	\]
	the unit sphere in the tangent space at \(o\).

	We assume that the initial hypersurface satisfies \eqref{eq:space-form-initial-annulus}
	and \eqref{eq:space-form-initial-angle}
	for suitably small values of \(\varepsilon_0\in(0,1)\) and
	\(\theta_0\in(0,\pi/2)\) which will be fixed in the following. We next record the interpretation of these assumptions in radial graph
	coordinates. Since
	\(
	\left\langle\nu_0,\partial_r\right\rangle>0,
	\)
	the initial hypersurface \(M_0\) is star-shaped with respect to \(o\), and hence
	can be written as
	\[
	M_0
	=
	\left\{
	\exp_o\bigl(\rho_0(\vartheta)\vartheta\bigr):
	\vartheta\in\mathbb S_o^n
	\right\}
	\]
	for a smooth function
	\(
	\rho_0:\mathbb S_o^n\rightarrow(0,\infty).
	\)
	In particular,
	\(
	r\left(
	\exp_o\bigl(\rho_0(\vartheta)\vartheta\bigr)
	\right)
	=
	\rho_0(\vartheta).
	\)
	
	Let \(\sigma\) denote the standard metric on \(\mathbb S_o^n\), and let
	\(\nabla^\sigma\) denote its Levi-Civita connection. Set
	\[
	v_0
	=
	\sqrt{
		1+
		\frac{|\nabla^\sigma\rho_0|_\sigma^2}{s_K(\rho_0)^2}
	}.
	\]
	The outward unit normal is
	\[
	\nu_0
	=
	\frac1{v_0}
	\left(
	\partial_r-
	\frac{\sigma^{ij}\partial_j \rho_0}{s_K(\rho_0)^2}\partial_i
	\right).
	\]
	It follows that
	\begin{equation}\label{eq:space-form-radial-normal}
	\left\langle\nu_0,\partial_r\right\rangle
	=
	\left(
	1+
	\frac{|\nabla^\sigma\rho_0|_\sigma^2}{s_K(\rho_0)^2}
	\right)^{-1/2}.
	\end{equation}
	Consequently,
	\begin{equation}\label{eq:space-form-angle-gradient}
	\left\langle\nu_0,\partial_r\right\rangle
	\ge\cos\theta_0
	\quad\Longleftrightarrow\quad
	|\nabla^\sigma\rho_0|_\sigma
	\le
	s_K(\rho_0)\tan\theta_0.
	\end{equation}

	\subsection{The initial reflection property}
	
	We shall use the standard linear models to represent $N_K^{n+1}$. We set
	\[
	\mathbb E_K^{n+2}=
	\begin{cases}
	\mathbb R^{n+1,1},&K=-1,\\
	\mathbb R^{n+2},&K=1,
	\end{cases}
	\]
	endowed with the Lorentzian scalar product when \(K=-1\) and the Euclidean
	scalar product when \(K=1\). We denote both scalar products by
	\(\langle\cdot,\cdot\rangle_K\). Then
	\[
	N_K^{n+1}
	=
	\left\{
	x\in\mathbb E_K^{n+2}:
	\langle x,x\rangle_K=K
	\right\},
	\]
	where, for \(K=-1\), we take the future component of the hyperboloid. In both cases, we choose our origin \(o\) to be the point whose coordinates in the ambient space are $(0,\dots,0,1)$.
	
	Given \(v\in \mathbb S_o^n \), the unit-speed geodesic issuing
	from \(o\) in the direction \(v\) is
	\begin{equation}\label{eq:space-form-geodesic}
	\gamma_v(s)=c_K(s)o+s_K(s)v.
	\end{equation}
	Its unit tangent vector is
	\begin{equation}\label{eq:space-form-geodesic-tangent}
	N_{v,s}:=\dot\gamma_v(s)=-Ks_K(s)o+c_K(s)v.
	\end{equation}
	We observe that $\langle \gamma_v(s),N_{v,s}\rangle_K=0$. We also set
	\begin{equation}\label{eq:space-form-reflecting-hypersurface}
	P_{v,s}
	=
	\left\{
	x\in N_K^{n+1}:
	\langle x,N_{v,s}\rangle_K=0
	\right\}.
	\end{equation}
	Then \(P_{v,s}\) is the totally geodesic hypersurface passing through
	\(\gamma_v(s)\) and perpendicular to \(\gamma_v\).
	We denote the two components of \(N_K^{n+1}\setminus P_{v,s}\) by
	\[
	\mathcal H_{v,s}^{+}
	=
	\left\{
	x\in N_K^{n+1}:
	\langle x,N_{v,s}\rangle_K>0
	\right\},
	\qquad
	\mathcal H_{v,s}^{-}
	=
	\left\{
	x\in N_K^{n+1}:
	\langle x,N_{v,s}\rangle_K<0
	\right\}.
	\]
	Since
	\[
	\langle o,o\rangle_K=K,\qquad
	\langle v,v\rangle_K=1,\qquad
	\langle o,v\rangle_K=0,
	\]
	equation \eqref{eq:space-form-geodesic-tangent} gives
	\begin{equation} \label{product-N}
	\langle N_{v,s},N_{v,s}\rangle_K=1,
	\qquad
	\langle N_{v,s},o\rangle_K=-s_K(s).
	\end{equation}
	In the following we will only consider $s>0$. Then we have $\langle o,N_{v,s}\rangle_K<0$,
	so \(o\in\mathcal H_{v,s}^{-}\).

	Let us denote
	by \(\mathcal R_{v,s}\) the reflection of the ambient linear space across \(P_{v,s}\), that is
	\begin{equation}\label{eq:linear-space-form-reflection}
	\mathcal R_{v,s}(x)
	=
	x-2\langle x,N_{v,s}\rangle_KN_{v,s}.
	\end{equation}
	Since \(N_{v,s}\) is a unit spacelike vector, this map preserves
	\(\langle\cdot,\cdot\rangle_K\). It therefore restricts to an isometry of
	\(N_K^{n+1}\), fixes \(P_{v,s}\) pointwise and exchanges
	\(\mathcal H_{v,s}^{+}\) and \(\mathcal H_{v,s}^{-}\).
	Therefore, \(\mathcal R_{v,s}\) gives the reflection of \(N_K^{n+1}\)
	across the totally geodesic hyperplane \(P_{v,s}\). This explicit representation allowed by the linear model will be convenient in our following analysis.
	
	For fixed $v$ and variable $s$, the hypersurfaces \(P_{v,s}\) are the standard families used in the
	Alexandrov moving-plane method in space forms \cite{Alexandrov}. A detailed account of the construction in the hyperbolic case, with its application to geometric flows, has been recently given in 
	\cite[Sections~2.1 and~3.1]{BourniEspinarMishra}; for a unified treatment
	of moving planes in hyperbolic space and in a spherical hemisphere, see
	\cite[Section~2]{CiraoloRoncoroniVezzoni}.

	For any closed embedded hypersurface \(M\subset N_K^{n+1}\ \), we can give the definitions regarding the reflection of $M$ exactly as in Definition \ref{def:reflection}, by replacing the Euclidean hyperplanes with the totally geodesic hypersurfaces $P_{v,s}$. Then Theorem \ref{thm:CG} by Chow-Gulliver on the preservation of 
	strict reflection along the flow holds unchanged in the space form setting.
	
	We now study the relation between the $C^1$-closeness to a sphere of our initial data and the reflection property. We follow here a different strategy from the one of the Euclidean case: instead of analyzing $M_0$ directly as in Proposition \ref{reflect}, we first derive estimates for the reflection of a round sphere and then extend the result to $M_0$ by a perturbative argument.

	\begin{lemma}[Reflection of the reference sphere]
		\label{lem:reference-sphere-reflection}
		Let
		\(
		S_{R_0}^K(o)=\partial B_{R_0}^K(o).
		\)
		In the spherical case, assume that \(R_0<\pi/2\). Then, for every \(v\in \mathbb S_o^n\) and \(s\in(0,R_0]\), the sphere $S_{R_0}^K(o)$ reflects strictly across $P_{v,s}$. Along the reflection hypersurface we have the transversality relation
		\begin{equation}\label{eq:reference-sphere-angle}
		\left\langle\partial_r,N_{v,s}\right\rangle_K
		=
		\frac{s_K(s)}{s_K(R_0)}>0 \quad \text{on} \quad 	S_{R_0}^K(o)\cap P_{v,s}.
		\end{equation}
		
		In addition, the reflection is uniform for $P_{v,s}$ away from the origin and for $x$ away from $P_{v,s}$, in the following sense. For any \(\gamma>0\) and \(\rho>0\), there exists 
		\(\delta_{\gamma,\rho}>0\) such that, for every \( s \in [\gamma,R_0] \) and $v \in \mathbb{S}^n_o$, we have
		\begin{equation}\label{eq:uniform-radial-separation}
		r\bigl(\mathcal R_{v,s}(x)\bigr)
		\le R_0-\delta_{\gamma,\rho}
		\end{equation}
		for every
		\(x\in S_{R_0}^K(o)\cap\mathcal H_{v,s}^{+}\) such that
		\(
		\dist(x,P_{v,s})\ge\rho.\) 
	\end{lemma}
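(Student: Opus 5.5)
The proof will be carried out entirely in the linear model, where both the sphere and the reflection have explicit formulas. A point $x\in N_K^{n+1}$ has $r(x)=R_0$ exactly when $\langle x,o\rangle_K=K\,c_K(R_0)$, because $\gamma_w(r)=c_K(r)o+s_K(r)w$ gives $\langle\gamma_w(r),o\rangle_K=Kc_K(r)$. For $K=-1$ this reads $\langle x,o\rangle=-\cosh r$, and for $K=1$ it reads $\langle x,o\rangle=\cos r$. In both cases the quantity $-K\langle x,o\rangle_K$ is strictly decreasing in $r$ on the relevant range; in the spherical case this range is $r\in[0,\pi)$.

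Set $a(x):=\langle x,N_{v,s}\rangle_K$. By \eqref{product-N},
\[
\langle \mathcal R_{v,s}(x),o\rangle_K=\langle x,o\rangle_K+2a(x)\,s_K(s).
\]
Hence for $x\in S^K_{R_0}(o)\cap\mathcal H^+_{v,s}$, where $a>0$, we get
\[
-K\langle \mathcal R x,o\rangle_K=-Kc_K(R_0)\cdot K+\dots ,
\]
and the sign works out so that $\mathcal R x$ lies strictly closer to $o$.

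For $K=-1$ we have $\cosh r(\mathcal R x)=\cosh R_0-2a\sinh s<\cosh R_0$. For $K=1$ we have $\cos r(\mathcal R x)=\cos R_0+2a\sin s>\cos R_0$. In the spherical case one must also know that $\mathcal Rx$ stays in the hemisphere where $r\mapsto\cos r$ is monotone. I will check this with $r<\pi$, which is automatic since $\cos r(\mathcal Rx)>\cos R_0>0$. Thus $r(\mathcal R x)<R_0$, so the reflected cap lies in the open ball.

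The ball is also on the $\mathcal H^-$ side of the reflected cap. Indeed $\mathcal R$ maps $\mathcal H^+$ to $\mathcal H^-$, and the inclusion $\mathcal R(B\cap\mathcal H^+)\subset B\cap\mathcal H^-$ follows by the same computation applied to interior points ($r<R_0$). This gives the strict reflection inclusion.

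For transversality \eqref{eq:reference-sphere-angle}, I write $x=c_K(R_0)o+s_K(R_0)w$ with $w\in\mathbb S^n_o$. Then
\[
\partial_r=-Ks_K(R_0)o+c_K(R_0)w,
\]
and the condition $x\in P_{v,s}$ gives
\[
c_K(R_0)\langle o,N\rangle_K+s_K(R_0)\langle w,N\rangle_K=0,
\]
that is,
\[
\langle w,N\rangle_K=\frac{c_K(R_0)s_K(s)}{s_K(R_0)}.
\]
Substituting,
\[
\langle\partial_r,N\rangle_K=Ks_K(R_0)s_K(s)+\frac{c_K(R_0)^2s_K(s)}{s_K(R_0)}=\frac{s_K(s)}{s_K(R_0)}\bigl(Ks_K^2+c_K^2\bigr)=\frac{s_K(s)}{s_K(R_0)}.
\]
This is positive for $s>0$. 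Since $\partial_r$ is normal to the sphere, $N_{v,s}$ is not tangent to it, which completes strict reflection.

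For the uniform statement \eqref{eq:uniform-radial-separation}, the displayed identity shows that the decrease in $\cosh r$ (respectively the increase in $\cos r$) equals $2a(x)s_K(s)$. Moreover $a(x)$ is a monotone function of $\dist(x,P_{v,s})$: it equals $\sinh\dist$ or $\sin\dist$, the standard formula for the signed distance to a totally geodesic hypersurface in the linear model. So for $s\ge\gamma$ and $\dist\ge\rho$ the change is at least $2s_K(\rho)s_K(\gamma)>0$, independent of $v$. Inverting the strictly monotone function $\cosh$ (or $\cos$) on the compact range $[0,R_0]$ then gives a uniform $\delta_{\gamma,\rho}$; uniform continuity of the inverse handles the conversion.

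The main obstacle is only bookkeeping. The sign conventions differ between $K=\pm1$, and the spherical case needs the claim that $a=\sin(\dist)$ holds with $\dist\le\pi/2$ in the relevant region. I would verify that claim by noting that $x$ and the projection direction lie in a hemisphere.
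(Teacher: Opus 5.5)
Your proof is correct. For the inclusion and for \eqref{eq:reference-sphere-angle} it is the paper's argument. You use the same linear-model identity $\langle \mathcal R_{v,s}x,o\rangle_K=\langle x,o\rangle_K+2a\,s_K(s)$. Your expansion of $x$ and $\partial_r$ in terms of $o$ and $w$ is the paper's relation $o=c_K(R_0)x-s_K(R_0)\partial_r$ paired with $N_{v,s}$.

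The genuine difference is in \eqref{eq:uniform-radial-separation}.
\begin{itemize}
\item The paper fixes $v$ by rotational symmetry and uses compactness. The pairs $(s,x)$ with $\gamma\le s\le R_0$, $x\in S^K_{R_0}(o)\cap\mathcal H^+_{v,s}$ and $\dist(x,P_{v,s})\ge\rho$ form a compact set. It then takes $\delta_{\gamma,\rho}$ to be the minimum there of the positive continuous function $R_0-r(\mathcal R_{v,s}x)$.
\item You instead use the signed-distance identity $\langle x,N_{v,s}\rangle_K=s_K(\dist(x,P_{v,s}))$ on $\mathcal H^+_{v,s}$. This gives the explicit lower bound $2s_K(\gamma)s_K(\rho)$ for $|c_K(r(\mathcal R_{v,s}x))-c_K(R_0)|$, and hence an explicit constant. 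For example, $\cosh(R_0-\delta_{\gamma,\rho})=\cosh R_0-2\sinh\gamma\sinh\rho$ for $K=-1$ (the condition is vacuous if the right side is below $1$), and analogously with $\cos$ for $K=1$.
\end{itemize}
Your route needs one standard formula. In return it gives a quantitative $\delta_{\gamma,\rho}$ with $v$-independence built in, which could be one ingredient toward a more explicit threshold in the non-flat case. The paper's route needs nothing beyond continuity.

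Two slips do not affect the argument:
\begin{itemize}
\item The quantity that is strictly decreasing in $r$ for both signs of $K$ is $\langle x,o\rangle_K=Kc_K(r)$. It is not $-K\langle x,o\rangle_K=-c_K(r)$, which increases when $K=1$. Your case-by-case formulas are nevertheless correct.
\item The hemisphere worry is unnecessary. On $\mathbb S^{n+1}$ one always has $r\in[0,\pi]$, where $\cos$ is strictly decreasing. Also, the distance from any point to a great sphere is at most $\pi/2$, so $a=\sin(\dist(x,P_{v,s}))$ holds everywhere on $\mathcal H^+_{v,s}$.
\end{itemize}
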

	
	\begin{proof}
		If $s=R_0$, then $S_{R_0}^K(o)$ and $P_{v,s}$ touch at a single point, and the assertions of the theorem are either vacuous or trivial. We then consider $s \in (0,R_0)$ and take
		\(
		x\in S_{R_0}^K(o)\cap\mathcal H_{v,s}^{+}.
		\)
		We set
		\[
		a=\langle x,N_{v,s}\rangle_K,
		\qquad
		x^*=\mathcal R_{v,s}(x),
		\]
		where $a>0$ since $x\in \mathcal H_{v,s}^{+}$. Observe that, in our model, the distance from \(o\) is characterized by
		\begin{equation}\label{eq:space-form-distance-linear-model}
		c_K(r(x))=K\langle x,o\rangle_K.
		\end{equation}
		Since \(r(x)=R_0\), equations
		\eqref{eq:linear-space-form-reflection} and
		\eqref{eq:space-form-distance-linear-model} give
		\begin{equation}
		\label{eq:reflected-radius-comparison}
		c_K(r(x^*))
		=K\langle x-2aN_{v,s},o\rangle_K =c_K(R_0)-2aK\langle N_{v,s},o\rangle_K =c_K(R_0)+2aKs_K(s),	\end{equation}
		which gives $c_K(r(x^*))-c_K(R_0)=2aKs_K(s)$.
		Taking into account the different monotonicity of $c_K(r)$ for $K=1$ and $K=-1$, 
		we conclude in both cases that $r(x^*)<R_0$, that is, $x^*$ is contained in the open ball $B_{R_0}^K(o)$.
		
		To prove \eqref{eq:reference-sphere-angle}, let us consider any point $x\in S_{R_0}^K(o)\cap P_{v,s}$.
		From \eqref{eq:space-form-geodesic} and \eqref{eq:space-form-geodesic-tangent}
		we see that there is  \(\vartheta\in \mathbb S_0^n\),
		such that
		\[
		x=c_K(R_0)o+s_K(R_0)\vartheta
		\]
		\[
		\partial_r=-Ks_K(R_0)o+c_K(R_0)\vartheta.
		\]
		Using \(c_K^2+Ks_K^2=1\), these two identities yield
		\begin{equation}\label{eq:origin-radial-decomposition}
		o=c_K(R_0)x-s_K(R_0)\partial_r.
		\end{equation}
		Taking the scalar product with \(N_{v,s}\), and using
		$\langle x,N_{v,s}\rangle_K=0$ and \eqref{product-N}
		we obtain
		\[
		-s_K(s)=-s_K(R_0)\langle\partial_r,N_{v,s}\rangle_K,
		\]
		which proves \eqref{eq:reference-sphere-angle}. That identity implies in particular that \(N_{v,s}\), which is the normal direction to the reflecting hypersurface, is transverse to $S_{R_0}^K(o)$ at the intersection points; this completes the verification of the strict reflection.
		
		We now prove the uniform radial separation. By rotational symmetry, it suffices to consider a fixed $v \in \mathbb{S}^n_o$. For any given $\gamma,\rho>0$, consider the set
		\[
		\begin{aligned}
		\mathcal K_{\gamma,\rho}
		=
		\bigl\{(s,x):\;&
		\gamma\le s\le R_0,\quad
		x\in S_{R_0}^K(o) \cap \mathcal H_{v,s}^{+},\quad
		\dist(x,P_{v,s})\ge\rho
		\bigr\}.
		\end{aligned}
		\]
		If \(\mathcal K_{\gamma,\rho}\) is empty, the assertion is vacuous, and
		any positive value of \(\delta_{\gamma,\rho}\) may be chosen. Otherwise, it is easy to see that \(\mathcal K_{\gamma,\rho}\) is compact. We know from the first part of the proof that  $r(\mathcal R_{v,s}(x))<R_0$ for all $x \in \mathcal K_{\gamma,\rho}$.  Then, setting
		\(
		\delta_{\gamma,\rho}
		:=
		\min_{\mathcal K_{\gamma,\rho}}R_0-r(\mathcal R_{v,s}(x))>0
		\), 
		we obtain the sought inequality. 
	\end{proof}

	\begin{prop}[Initial reflection in space forms]
		\label{prop:initial-space-form-reflection}
		For any $c_0 \in (0,R_0)$ there exist \(\varepsilon_0, \theta_0>0\), only depending on \(R_0,c_0\)
		such that if \(M_0\) satisfies \eqref{eq:space-form-initial-annulus}--\eqref{eq:space-form-initial-angle}
		then	\(M_0\) reflects strictly  up to \(P_{v,c_0}\)
		for every \(v\in\mathbb S_o^n\). 
	\end{prop}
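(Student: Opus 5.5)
The plan is to combine Lemma~\ref{lem:reference-sphere-reflection} with a two-region perturbative argument. Fix \(v\in\mathbb S_o^n\) and \(s\ge c_0\); by rotational symmetry all constants will be independent of \(v\). If \(s>R_0(1+\varepsilon_0)\), then \(M_0\cap\mathcal H_{v,s}^+=\emptyset\), because \(\dist(o,P_{v,s})=s\) and \(M_0\subset \overline{B^K_{R_0(1+\varepsilon_0)}(o)}\). So we may restrict to \(s\in[c_0,R_0(1+\varepsilon_0)]\). For \(s\) slightly larger than \(R_0\) the lemma does not apply directly. I would handle this by comparing \(M_0\) with the reference sphere \(S^K_{R_0'}(o)\) of a slightly larger radius \(R_0'=R_0(1+2\varepsilon_0)\), or by proving the lemma for all radii in a compact interval around \(R_0\) and extending it up to \(s\le R'\); the compactness argument in the lemma runs equally well with \(R_0\) replaced by a radius varying in a compact set. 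In the spherical case, \(R_0(1+\varepsilon_0)<\pi/2\) keeps everything inside the open hemisphere, where the moving planes \(P_{v,s}\) are well behaved.

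\emph{Near region.} Let \(d=d_{v,s}\) be the signed distance to \(P_{v,s}\), positive on \(\mathcal H_{v,s}^+\), and let \(\xi=\nabla d\) be the unit field along the geodesics orthogonal to \(P_{v,s}\); on \(P_{v,s}\) we have \(\xi=N_{v,s}\). The key claim is that there exist \(\rho>0\) and \(\kappa>0\), depending only on \(R_0\) and \(c_0\), such that
\[
\langle\nu_0,\xi\rangle>0\quad\text{on }M_0\cap\{|d|\le 2\rho\}
\qquad\text{and}\qquad
\langle\partial_r,\xi\rangle\ge 2\kappa\quad\text{on }\{|d|\le2\rho,\ |r-R_0|\le 2\varepsilon_0R_0\},
\]
uniformly in \(s\in[c_0,R']\). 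On the sphere itself, \eqref{eq:reference-sphere-angle} gives \(\langle\partial_r,N_{v,s}\rangle_K=s_K(s)/s_K(R_0)\ge s_K(c_0)/s_K(R_0)\), and continuity plus compactness of the relevant set of \((s,x)\) extends a positive lower bound to a neighbourhood. Since \(\nu_0\) lies within angle \(\theta_0\) of \(\partial_r\), we get \(\langle\nu_0,\xi\rangle\ge 2\kappa\cos\theta_0-\sin\theta_0>0\) once \(\theta_0\) is small.

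With this positivity, the argument of Proposition~\ref{reflect} carries over verbatim, with vertical lines replaced by the geodesics orthogonal to \(P_{v,s}\). Every such geodesic meets \(M_0\cap\{|d|\le2\rho\}\) at most once, since two exit points cannot occur without an intermediate entry point, which would have \(\langle\nu_0,\xi\rangle<0\). For \(x\in M_0\) with \(0<d(x)<\rho\), the reflected point \(x^*=\mathcal R_{v,s}(x)\) lies on the same orthogonal geodesic at signed distance \(-d(x)\). The open segment from \(x\) to \(x^*\) therefore lies in the slab, does not meet \(M_0\), and starts into \(\Omega_0\); hence \(x^*\in\Omega_0\). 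Transversality on \(M_0\cap P_{v,s}\) is just \(\langle\nu_0,N_{v,s}\rangle>0\).

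\emph{Far region.} Take \(x\in M_0\) with \(d(x)\ge\rho\), and let \(\bar x\) be its radial projection onto \(S^K_{R_0}(o)\). Then \(\dist(x,\bar x)\le\varepsilon_0R_0\), so for \(\varepsilon_0R_0<\rho/2\) we have \(d(\bar x)\ge\rho/2\). Lemma~\ref{lem:reference-sphere-reflection}, applied with \(\gamma=c_0\) and \(\rho/2\), gives \(r(\mathcal R_{v,s}\bar x)\le R_0-\delta\). Since \(\mathcal R_{v,s}\) is an isometry,
\[
r(x^*)\le R_0-\delta+\varepsilon_0R_0<R_0(1-\varepsilon_0)
\]
as soon as \(2\varepsilon_0R_0<\delta\). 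Now \(M_0\) is star-shaped and satisfies \eqref{eq:space-form-initial-annulus}, so \(B^K_{R_0(1-\varepsilon_0)}(o)\subset\Omega_0\), and therefore \(x^*\in\Omega_0\). Combining the two regions shows that \(\mathcal R_{v,s}(M_0\cap\mathcal H^+_{v,s})\subset\Omega_0\cap\mathcal H^-_{v,s}\), i.e.\ strict reflection for every \(s\ge c_0\).

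The order of choices is: first \(\rho\) and \(\kappa\) from the compactness argument, which depends only on \(R_0\) and \(c_0\); then \(\delta=\delta_{c_0,\rho/2}\); and finally \(\varepsilon_0\) and \(\theta_0\) small relative to these. The main obstacle is the uniform positivity of \(\langle\nu_0,\xi\rangle\) in a slab of fixed width around \(P_{v,s}\). The point is that \(\xi\) must be controlled off the plane and at points off the reference sphere, uniformly in \(s\) up to \(R'\). I would first try to obtain this by writing \(\xi\) explicitly in the linear model and using joint continuity of \(\langle\partial_r,\xi\rangle\) in \((s,x)\) on a compact set. If that fails near \(s\approx R_0\), where \(P_{v,s}\cap M_0\) degenerates to a small cap, I would instead use that there the whole cap \(M_0\cap\mathcal H^+_{v,s}\) lies in a region where \(\partial_r\) is nearly parallel to \(N_{v,s}\).
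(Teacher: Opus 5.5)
Your proposal is correct and follows essentially the paper's route. The near-plane region is handled by uniform positivity of \(\langle\nu_0,\partial_z\rangle\), which comes from the angle identity on the reference sphere, continuity in \((s,x)\), and the at-most-one-crossing argument along geodesics orthogonal to \(P_{v,s}\); the far region is handled by radial projection onto \(S^K_{R_0}(o)\), the uniform separation in Lemma~\ref{lem:reference-sphere-reflection}, and the isometry property of \(\mathcal R_{v,s}\). The larger comparison sphere you propose for \(s\in(R_0,R_0(1+\varepsilon_0)]\) is unnecessary: for such \(s\) every cap point satisfies \(\dist(x,P_{v,s})\le r(x)-s\le\varepsilon_0R_0<\rho\), so the far region is empty and the near-region argument covers the whole cap, exactly as in the paper (and your compactness argument does extend to these \(s\), since the limiting value of \(\langle\partial_r,N_{v,s}\rangle\) at the pole \(s=R_0\) is \(1\)).
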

	
	\begin{proof}
		Let us fix \(c_0 \in(0,R_0)\) and take any $s \geq c_0$ and \(v\in\mathbb S_o^n\). As before, by rotational symmetry, it is sufficient to take a fixed $v$.  In addition, we only need to consider $s \in [s_0,R_0(1+\varepsilon_0)]$ since otherwise the outer cap of \(M_0\) is empty. We will analyze separately points of $M_0$ which are close or far from the reflecting hypersurface.
		
		In order to study the behaviour near \(P_{v,s}\), we consider signed Fermi coordinates around  \(P_{v,s}\). We denote by $z$ the coordinate corresponding to the signed distance function from \(P_{v,s}\), chosen to be positive on \(\mathcal H_{v,s}^{+}\), and by $\partial_z$ the corresponding unit tangent vector. Observe that the reflection \(\mathcal R_{v,s}\) corresponds to reversing the sign $z \mapsto -z$ and leaving the other coordinates unchanged. Observe also that, at points \( x \in P_{v,s}\), we have $\partial_z(x)=N_{v,s}$. We point out that the vector field $\partial_z$ depends on $v,s$ (in contrast with the Euclidean case, where it only depends on $v$), although we omit writing this dependence explicitly.
		
		We want to show that $\langle \nu , \partial_z \rangle >0$ at points of $M_0$ close to $P_{v,s}$. To this purpose, we first consider points on the reference sphere. By \eqref{eq:reference-sphere-angle}, we have for all $s \in [c_0,R_0]$
		$$
		\left\langle\partial_r, \partial_z  \right\rangle_K = \left\langle\partial_r, N_{v,z}  \right\rangle_K = \frac{s_K(s)}{s_K(R_0)}
		\ge
		\frac{s_K(c_0)}{s_K(R_0)}  
		>0
		\quad \text{on} \quad 	S_{R_0}^K(o)\cap P_{v,s}.
		$$

		By uniform continuity, there exists $\rho>0$ enough small, only depending on $c_0,R_0$, such that, for all $s \in [c_0,R_0]$ and for all $y \in S_{R_0}^K(o)$ with $\dist(y, P_{v,s}) \leq 3 \rho$, we have
		\begin{equation} \label{est1}
		\langle\partial_r(y), \partial_z(y) \rangle_K
		\ge
		\frac{3 s_K(c_0)}{4 s_K(R_0)}. 
		\end{equation}
	
		By the smooth dependence of $\partial_z$ on $s$, after possibly decreasing $\rho>0$ and then $\varepsilon_0>0$, we also have 
		\begin{equation} \label{est2}
		\left\langle\partial_r(x), \partial_z (x) \right\rangle_K
		\ge
		\frac{ s_K(c_0)}{2 s_K(R_0)}
		\end{equation}
		for all $s \in [c_0,R_0(1+\varepsilon_0)]$ and every $x \in N^{n+1}_K$  for which there exists $y \in S_{R_0}^K(o)$ satisfying $\dist(x,y)<\rho$ and $\dist(y, P_{v,s}) \leq 3 \rho$.
 Let us additionally require that $\varepsilon_0>0$ is small enough to have $R_0\varepsilon_0 \leq \rho$.

		With the above choice of parameters, let us now analyze the reflection of $M_0$ across $P_{v,s}$, for $s \in [c_0,R_0(1+\varepsilon_0)]$. We first consider any point $x \in M_0$ such that $\dist(x,P_{v,s}) \leq 2\rho$.  We can write $x=\exp_o\bigl(\rho_0(\vartheta)\vartheta\bigr)$ for some $\theta \in\mathbb S_o^n$. 	Let
		\(
		y=\exp_o(R_0\vartheta)
		\)
		be the point in $S_{R_0}^K(o)$ on the same radial geodesic. By assumption \eqref{eq:space-form-initial-annulus} we have
		$$
		\dist(x,y)=|\rho_0(\vartheta)-R_0| \leq R_0\varepsilon_0  \leq \rho.
		$$
		Then we also have
		\[
		\dist(y, P_{v,s})	\le
		\dist(x, P_{v,s}) + \dist(x,y) \leq 3 \rho.
		\]
		By \eqref{est2}, we conclude that
		$$
		\left\langle\partial_r(x), \partial_z (x) \right\rangle_K
		\ge
		\frac{ s_K(c_0)}{2 s_K(R_0)}>0
		$$	
		for all $x \in M_0$ such that $\dist(x,P_{v,s}) \leq 2 \rho $.

		It then follows from \eqref{eq:space-form-initial-angle} that for any $x \in M_0$ such that $\dist(x,P_{v,s}) \leq 2\rho$, we have
		\begin{equation}\label{eq:space-form-perturbed-transversality}
		\left\langle\nu_0 (x),\partial_z(x) \right\rangle_K
		\ge
		\left\langle\partial_r (x),\partial_z(x) \right\rangle_K
		-
		|\nu_0(x)-\partial_r(x)|\ge
		\frac{s_K(c)}{2s_K(R_0)}
		-
		\sqrt{2-2 \cos\theta_0} >0, 
		\end{equation}
		provided \(\theta_0\) is sufficiently small, depending only on $c_0$, $R_0$.
		
		In particular, for points $x \in  M_0 \cap P_{v,s}$,  \eqref{eq:space-form-perturbed-transversality} shows that
		$\langle \nu_0(x),N_{v,s} \rangle >0$, i.e. $N_{v,s}$ is not tangent to $M_0$ along the intersection with $P_{v,s}$.
		
		In addition, similarly to the Euclidean case, the positivity of $\langle \nu_0,N_{v,s} \rangle$ allows to exclude that a point  \(x \in M_0 \cap \mathcal H_{v,s}^{+}\) with $\dist(x,P_{v.s}) \leq 2\rho$ satisfies $\mathcal R_{v,s} (x) \in M_0$. In fact, $x$ and \(\mathcal R_{v,s} (x) \) lie on the same geodesic perpendicular to $P_{v,s}$  parametrized by $z \in [-2\rho,2\rho]$. Along such a geodesic, the scalar product $\left\langle\nu_0,\partial_z(x) \right\rangle_K$ must change sign on two consecutive transverse intersections with $M_0$, depending on whether the geodesic enters or exits the region enclosed by $M_0$ as $z$ increases. On the other hand, \eqref{eq:space-form-perturbed-transversality} shows that the sign is always positive. This implies that there can be at most one intersection, and therefore \(\mathcal R_{v,s} (M_0) \) cannot intersect $M_0$ at points at distance less than $2 \rho$ from $P_{v,s}$.
		
		We observe that, for $R_0\leq s\leq R_0(1+\varepsilon_0)$, every point
			$x\in M_0\cap \mathcal H^+_{v,s}$ satisfies 
			\[
			\operatorname{dist}(x,P_{v,s})
			\leq r(x)-s
			\leq R_0\varepsilon_0 \leq \rho  < 2 \rho.
			\]
			Thus, for such values of $s$,  the whole outer cap is covered by the preceding near-hyperplane argument.

			It remains to consider $s\in[c_0,R_0]$ and points
			$x\in M_0\cap \mathcal H^+_{v,s}$ 
			 such that $\dist(x,P_{v,s})>2\rho$. As before, we can use \eqref{eq:space-form-initial-annulus} to show that there is $y \in S_{R_0}^K(o)$ on the same radial geodesic, which satisfies $\dist(x,y) \leq R_0\varepsilon_0 \leq \rho$. Then $y$ satisfies 
		$$
		\dist(y,P_{v,s}) \geq \dist(x,P_{v,s}) - \dist(x,y) \geq \rho
		$$
		and therefore 
		$$r(\mathcal R_{v,s} (y)) \leq R_0-\delta,$$
		where $\delta=\delta(\rho,c_0,R_0)>0$ is given from Lemma \ref{lem:reference-sphere-reflection}. Since $\mathcal R_{v,s}$ is an isometry, we deduce
		$$r(\mathcal R_{v,s} (x))
		\leq  r(\mathcal R_{v,s} (y)) + \dist(x,y)
		\leq R_0-\delta+R_0\varepsilon_0.
		$$
		For $\varepsilon_0>0$ small enough to have $2 R_0\varepsilon_0 <\delta$, we conclude that $r(\mathcal R_{v,s} (x))<R_0(1-\varepsilon_0)$, so that by \eqref{eq:space-form-initial-annulus} $\mathcal R_{v,s} (x)$ lies in the interior of the region bounded by $M_0$.

		Thus, we have shown that \(M_0\) reflects strictly across every \(P_{v,s}\) with \(v\in S_o^n\) and \(s\ge c_0\). 
	\end{proof}

	We next derive the quantitative geometric consequence of the reflection
	property that will replace the Euclidean estimate for the tangential
	part of the position vector.
	Let
	\begin{equation} \label{defY}
	Y=s_K(r)\partial_r
	\end{equation}
	be the radial conformal vector field. Reflection up to distance \(c\) in
	every direction controls the tangential component of \(Y\), and hence the
	gradient of the radial graph function.
	
	\begin{lemma}[Gradient estimate from Alexandrov reflection]
		\label{lem:space-form-gradient-reflection}
		Let \(M\subset N_K^{n+1}\) be a smooth, closed and embedded hypersurface
		which reflects strictly up to \(P_{v,c}\) for every
		\(v\in\mathbb S_o^n\). 
		Then, wherever
		\(
		s_K(r)>s_K(c),
		\)
		one has
		\begin{equation}\label{eq:normal-from-reflection}
		\left\langle\nu,\partial_r\right\rangle
		\ge
		\sqrt{
			1-
			\frac{s_K(c)^2}{s_K(r)^2}
		}.
		\end{equation}
		If \(M\) is written as a radial graph
		\(
		M
		=
		\left\{
		\exp_o\bigl(\rho(\vartheta)\vartheta\bigr):
		\vartheta\in\mathbb S_o^n
		\right\},
		\)
		then
		\begin{equation}\label{eq:radial-gradient-from-reflection}
		|\nabla^\sigma\rho|_\sigma
		\le
		\frac{
			s_K(c)s_K(\rho)
		}{
			\sqrt{s_K(\rho)^2-s_K(c)^2}
		}.
		\end{equation}
	\end{lemma}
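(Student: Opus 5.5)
We mimic the Euclidean proof of Proposition~\ref{prop:static-normal-control}, replacing the position vector by the conformal field $Y=s_K(r)\partial_r$ from \eqref{defY} and the affine quantity $\langle x,e\rangle$ by the linear-model quantity $\langle x,N_{v,c}\rangle_K$.

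\emph{Step 1: tangential bound.} The aim is the analogue of \eqref{eq:tangential-position-bound}, namely
\[
|Y^\top|\le s_K(c)\qquad\text{on } M,
\]
where $Y^\top=Y-\langle Y,\nu\rangle\nu$. Suppose instead that $|Y^\top|>s_K(c)$ at some $x\in M$. We will produce a direction $v\in\mathbb S_o^n$ and a parameter $s>c$ such that $x\in P_{v,s}$ and $N_{v,s}$ is tangent to $M$ at $x$. This contradicts strict reflection across $P_{v,s}$, because strict reflection up to $P_{v,c}$ includes strict reflection across $P_{v,s}$ for every $s\ge c$. Equivalently, for each unit tangent vector $e\in T_xM$ we seek the totally geodesic hypersurface through $x$ with normal $e$, and we compute its distance from $o$.

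\emph{Step 2: computing $s$.} Such a hypersurface is $\{y:\langle y,e\rangle_K=0\}$; this uses that $e\perp x$ in the linear model, since $e\in T_xN_K$. We decompose $e=-K\sigma o+w$ with $w\perp o$. Because $\langle e,e\rangle_K=1$, we have $|w|^2=1+K\sigma^2$, so $w=c_K(s)v$ and $\sigma=s_K(s)$ for a suitable $s$ and unit $v$. This shows $e=N_{v,s}$, with $s$ determined by
\[
s_K(s)=-\langle e,o\rangle_K .
\]
Next we compute $\langle e,o\rangle_K$ from \eqref{eq:origin-radial-decomposition}, now at a general point of radius $r$: $o=c_K(r)x-s_K(r)\partial_r$. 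Since $\langle e,x\rangle_K=0$, this gives $-\langle e,o\rangle_K=s_K(r)\langle \partial_r,e\rangle=\langle Y,e\rangle$. Choosing $e=-Y^\top/|Y^\top|$ would give the wrong sign, so we take $e=Y^\top/|Y^\top|$, and then $s_K(s)=|Y^\top|>s_K(c)$.

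The sign convention needs care: $s>0$ must hold, with $o\in\mathcal H^-_{v,s}$. It also needs care that $s>c$ follows from $s_K(s)>s_K(c)$. In the hyperbolic case this is immediate. In the spherical case it uses $s,c\le\pi/2$; this is where the hemisphere restriction enters, and it is the step I expect to be the main obstacle, since $s$ is only defined modulo the choice in $[0,\pi/2]$.

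\emph{Step 3: conclusion.} From $|Y|^2=s_K(r)^2=\langle Y,\nu\rangle^2+|Y^\top|^2$ we get
\[
\langle\nu,\partial_r\rangle^2\ge 1-\frac{s_K(c)^2}{s_K(r)^2}.
\]
The sign of $\langle\nu,\partial_r\rangle$ is positive on the region $s_K(r)>s_K(c)$ by the connectedness argument of Proposition~\ref{prop:static-normal-control}: on that set the quantity does not vanish, and it is positive at points of maximal $r$. Finally, \eqref{eq:radial-gradient-from-reflection} follows by inserting \eqref{eq:normal-from-reflection} into \eqref{eq:space-form-radial-normal} and solving for $|\nabla^\sigma\rho|_\sigma$.
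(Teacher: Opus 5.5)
This is essentially the paper's proof: it also takes $\tau=Y^\top/|Y^\top|$ and places $x$ on the totally geodesic hypersurface $N_K^{n+1}\cap\tau^\perp$, whose unit normal $\tau$ is tangent to $M$; it then computes the distance $s$ of that hypersurface from $o$ via $s_K(s)=-\langle o,\tau\rangle_K=s_K(r)\langle\partial_r,\tau\rangle_K=|Y^\top|$, and concludes from $s<c$ and $\langle\nu,\partial_r\rangle^2=1-s_K(s)^2/s_K(r)^2$ (two side remarks: the spherical ambiguity you flag is harmless, because $N_{v,s}=N_{-v,\pi-s}$ lets you always take $s\in(0,\pi/2)$; and the correct identity is $|w|^2=1-K\sigma^2$, not $1+K\sigma^2$). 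The one soft spot is the sign, where the paper simply fixes the orientation: $M\cap\{s_K(r)>s_K(c)\}$ need not be connected (it is all of $M$ in the paper's applications), so it is cleaner to argue directly that if $\langle\nu,\partial_r\rangle<0$ at $x=\gamma_v(r(x))$, then the first point $x'$ beyond $x$ where $\gamma_v$ leaves $\Omega$ is mapped onto $x\in M$ by $\mathcal R_{v,\mu}$ with $\mu=\tfrac12\bigl(r(x)+r(x')\bigr)>c$, which contradicts strict reflection.
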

	
	\begin{proof}
		Fix \(x\in M\). If \(Y^T(x)=0\), there is nothing to prove. Assume
		\(Y^T(x)\neq0\), and set
		\(
		\tau=\frac{Y^T(x)}{|Y^T(x)|}.
		\)
		Then
		\(
		\langle\partial_r,\tau\rangle_K>0.
		\)
		
		We now use the linear model of \(N_K^{n+1}\) and consider the hyperplane $\tau^\perp \subset \mathbb E_K^{n+2}$ orthogonal to $\tau$. As 
		$\tau
		\in T_xM\subset T_xN_K^{n+1}$,
		\(x\) belongs to
		the hypersurface \(P_\tau=N_K^{n+1}\cap\tau^\perp\), which is totally
		geodesic.  Moreover, \(\tau\in T_xM\), showing that $M$ does not reflect strictly across \(P_\tau\) because the transversality condition is violated.

		Let \(s\ge0\) denote the distance from \(o\) to \(P_\tau\). Since $\tau$ is the unit normal to a totally geodesic
		hypersurface at distance \(s\) from \(o\), by \eqref{product-N} it holds
		\[
		s_K(s) = -\langle o,\tau\rangle_K = s_K(r)\langle\partial_r,\tau\rangle_K
		,\]
		where we have written $
		o=c_K(r)x-s_K(r)\partial_r
		$ as in the final part of the proof of Lemma \ref{lem:reference-sphere-reflection}.
		
		Since the reflection property does not hold, the distance of the hypersurface  \(P_\tau\) must satisfy $s<c$ otherwise we contradict our hypotheses. We conclude
		\[
		\langle\nu,\partial_r\rangle^2 = 1 - \langle\tau,\partial_r\rangle^2
		= 1 -\frac{s_K(s)^2}{s_K(r)^2}
		\ge
		1-\frac{s_K(c)^2}{s_K(r)^2}.
		\]

		The orientation is chosen so that
		\(\langle\nu,\partial_r\rangle>0\), and therefore
		\eqref{eq:normal-from-reflection} follows. Finally, for a radial graph,
		combining \eqref{eq:space-form-angle-gradient} with
		\eqref{eq:normal-from-reflection} and solving for
		\(|\nabla^\sigma\rho|_\sigma\) gives
		\eqref{eq:radial-gradient-from-reflection}.
	\end{proof}
	
	We now combine the preceding gradient estimate with volume preservation
	to control the radial position of the evolving hypersurfaces.
	\begin{prop}[Radial closeness for later times]
		\label{prop:space-form-radial-closeness}
		Let \(c_0>0\) be suitably small (depending on $R_0$) and let the initial data satisfy
		\eqref{eq:space-form-initial-annulus} and
		\eqref{eq:space-form-initial-angle}
		with $\varepsilon_0,\theta_0>0$ the values given by
		Proposition~\ref{prop:initial-space-form-reflection}. Then, for every time for which the
		solution is defined, we have
		\begin{equation}\label{eq:space-form-radial-closeness}
		\big|r(x) - R_0\big|
		\le 2\pi s_K(c_0)
		\qquad\text{for every }x\in M_t.
		\end{equation}
		In particular, in the case $K=1$, if $\varepsilon_0,\theta_0$ are suitably small then the solution remains inside the half-sphere of radius $\pi/2$ centred at $o$.
	\end{prop}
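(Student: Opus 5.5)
Combining Proposition~\ref{prop:initial-space-form-reflection} with the space-form version of Theorem~\ref{thm:CG}, every $M_t$ reflects strictly up to $P_{v,c_0}$ for every $v\in\mathbb S_o^n$. The plan is to mimic Proposition~\ref{prop:static-radial-control}: first bound the radial oscillation $r_+-r_-$ of $M_t$, then use volume preservation to locate $R_0$ between $r_-$ and $r_+$. In the Euclidean case the oscillation bound came from a chord inequality. Here we obtain it instead by integrating the gradient bound of Lemma~\ref{lem:space-form-gradient-reflection} along a great circle of $\mathbb S_o^n$; this is where the factor $\pi$ enters.

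\emph{Step 1 (star-shapedness).} Show that $M_t$ remains a radial graph over $o$, i.e.\ that $o\in\Omega_t$ and $\langle\nu,\partial_r\rangle>0$. We use a continuity argument as in Proposition~\ref{prop:later-C^1-closeness}. Let $I$ be the set of times up to which $M_s$ stays in the region $\{s_K(r)\ge 2s_K(c_0)\}$ (and, for $K=1$, in the open hemisphere). On $I$, Lemma~\ref{lem:space-form-gradient-reflection} gives $\langle\nu,\partial_r\rangle\ge\sqrt{3}/2$, so $M_t$ is a radial graph $\rho(\cdot,t)$. Step 2 then shows that $\rho$ stays within $2\pi s_K(c_0)$ of $R_0$. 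For $c_0$ small depending on $R_0$, this is strictly inside the defining region, and the interval closes and opens. For $K=1$, the same smallness ensures $R_0+2\pi s_K(c_0)<\pi/2$. Here we need $R_0<\pi/2$, which follows from the hemisphere assumption on $M_0$ and volume normalization.

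\emph{Step 2 (oscillation).} Pick $\vartheta_\pm\in\mathbb S_o^n$ realizing $\rho_{\max}$ and $\rho_{\min}$, and join them by a minimizing great-circle arc of length at most $\pi$. Along this arc, \eqref{eq:radial-gradient-from-reflection} gives
\[
|\nabla^\sigma\rho|_\sigma\le \frac{s_K(c_0)s_K(\rho)}{\sqrt{s_K(\rho)^2-s_K(c_0)^2}}\le (1+Cs_K(c_0)^2)\,s_K(c_0),
\]
using $s_K(\rho)\ge 2s_K(c_0)$ (or larger, under the bootstrap). To get exactly $2\pi s_K(c_0)$ we need a factor of at most $2$. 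The crude bound $s_K(\rho)/\sqrt{s_K(\rho)^2-s_K(c_0)^2}\le 2/\sqrt3<2$ suffices, so $\rho_{\max}-\rho_{\min}\le 2\pi s_K(c_0)$ with room to spare.

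\emph{Step 3 (location of $R_0$).} Since $o\in\Omega_t$ and $\Omega_t$ is the radial region under the graph, we have $B^K_{\rho_{\min}}(o)\subset\Omega_t\subset B^K_{\rho_{\max}}(o)$. Volume preservation and monotonicity of $\operatorname{vol}(B^K_R)$ in $R$ then give $\rho_{\min}\le R_0\le\rho_{\max}$. Hence $|r-R_0|\le\rho_{\max}-\rho_{\min}\le 2\pi s_K(c_0)$.

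The main obstacle is the bootstrap in Step 1. The gradient estimate needs a priori lower radial bounds and the radial-graph structure, and these must not degenerate before Step 2 applies. We must also check that $o$ cannot exit $\Omega_t$, which follows from the positive lower bound on $r$. The same bootstrap must keep $M_t$ in the hemisphere when $K=1$ so that the reflection framework stays valid.
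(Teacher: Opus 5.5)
Your proposal is correct and follows essentially the same route as the paper: a continuity argument keeping $M_t$ a radial graph away from $o$ (and inside the hemisphere when $K=1$), the reflection gradient bound of Lemma~\ref{lem:space-form-gradient-reflection} integrated over a path of length at most $\pi$ in $\mathbb S_o^n$, and volume normalization to get $\rho_{\min}\le R_0\le\rho_{\max}$. The differences are cosmetic: you bootstrap on $s_K(r)\ge 2s_K(c_0)$, which gives the factor $2/\sqrt3$, whereas the paper bootstraps on $r>R_0/2$ and imposes $s_K(c_0)\le\sqrt{3/4}\,s_K(R_0/2)$ to get the factor $2$; you also make explicit the facts $o\in\Omega_t$ and star-shapedness, which the paper uses without comment.
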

	
	\begin{proof}
		The strict reflection property is preserved by the flow by Theorem \ref{thm:CG}. Then we can apply the gradient estimate from
		Lemma~\ref{lem:space-form-gradient-reflection} for every time in the existence interval.
		
		In the case $K=1$, let us set
		\[
		T^*
		=
		\sup
		\left\{
		T>0:
		\frac{R_0}{2} < r < \frac \pi2 
		\text{ on } M_t, \text{ for every } t\in[0,T)
		\right\}.
		\]
		In the case $K=-1$, we define $T^*$ analogously, but requiring only the bound from below $\frac{R_0}{2} < r$.
		The initial annular assumption, together with the condition $R_0(1+\ep_0)<\pi/2$ in the $K=1$ case, implies by continuity  that \(T^*>0\).

		By requiring \(c_0\) to be
		sufficiently small, we may assume that
		\(
		s_K(c_0)
		\leq
		\sqrt{\frac34}s_K(R_0/2).
		\)
		Then on \([0,T^*)\) estimate
		\eqref{eq:radial-gradient-from-reflection} gives
		\[
		|\nabla^\sigma\rho|_\sigma
		\le
		\frac{
			s_K(c_0)s_K(\rho)
		}{
			\sqrt{
				s_K(\rho)^2-s_K(c_0)^2}} \le
		2 s_K(c_0).
		\]
		Since the diameter of \((\mathbb S_o^n,\sigma)\) is \(\pi\),
		we conclude
		\begin{equation}\label{eq:radial-oscillation-direct}
		\rho_+(t)-\rho_-(t)
		\le
		2 \pi \, s_K(c_0),
		\end{equation}
		where
		\(
		\rho_-(t)=\min_{\mathbb S_o^n}\rho(\cdot,t),\)
		\(
		\rho_+(t)=\max_{\mathbb S_o^n}\rho(\cdot,t).
		\)
		
		We now use volume preservation to locate this annulus. Since
		\(
		\operatorname{vol}(\Omega_t)
		=
		\operatorname{vol}\bigl(B_{R_0}^K(o)\bigr)
		\)
		and \(M_t\) is a radial graph, the monotonicity of the volume of
		geodesic balls implies
		\[
		\rho_-(t)\le R_0\le\rho_+(t).
		\]
		Combining this with \eqref{eq:radial-oscillation-direct}, we obtain
		\begin{equation}\label{eq:radial-closeness-bootstrap}
		R_0-2 \pi \, s_K(c_0)
		\le
		\rho(\vartheta,t)
		\le
		R_0+2 \pi \, s_K(c_0).
		\end{equation}
		We choose $c_0$ so small that  
		\(
		R_0-2 \pi s_K(c_0)>\frac{R_0}{2}
		\). In the $K=1$ case, we also require $R_0+ 2 \pi \, s_K(c_0)< \pi/2$.
		Suppose \(T^*<T_{\max}\). Then the
		smoothness of the flow implies that
		\eqref{eq:radial-closeness-bootstrap} also holds at $t=T^*$.
		Then we have
		\[
		\frac{R_0}{2} < \min_{M_{T^*}}r \leq \max_{M_{T^*}}r <  \frac{\pi}{2},
		\]
		where the last inequality only concerns the $K=1$ case. 
		By continuity, the strict inequalities also hold up to a larger time, contradicting the maximality of $T^*$.
		This shows that \(T^*=T_{\max}\) and that our radial bound holds for every time for which the solution exists.
	\end{proof}
	
	\begin{cor}[Normal control for later times]
		\label{cor:space-form-normal-control}
		Let our solution satisfy the reflection property with threshold $c_0$. Then, for every time for which it is defined, we have $\langle\nu,\partial_r\rangle\geq \sigma_*$, for some $\sigma_*=\sigma_*(c_0,R_0)>0$. Moreover, for any $R_0>0$, we have $\sigma_*(c_0,R_0) \to 1$
		as $c_0\to 0$.
	\end{cor}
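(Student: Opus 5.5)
The plan is to combine the preserved reflection property with the radial bound of Proposition~\ref{prop:space-form-radial-closeness}, and then apply the pointwise angle estimate of Lemma~\ref{lem:space-form-gradient-reflection}.

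First, by Theorem~\ref{thm:CG} in its space-form version, the threshold $c_0$ is preserved. So for every $t$ in the existence interval, $M_t$ reflects strictly up to $P_{v,c_0}$ for every $v\in\mathbb S_o^n$. Second, Proposition~\ref{prop:space-form-radial-closeness}, with $c_0$ small as required there, gives
\[
r \ge R_0-2\pi s_K(c_0) > \tfrac{R_0}{2} \quad \text{on } M_t .
\]
The smallness requirement $s_K(c_0)\le \sqrt{3/4}\,s_K(R_0/2)$ from that proof then ensures
\[
s_K(r)\ge s_K\bigl(R_0-2\pi s_K(c_0)\bigr) > s_K(c_0)
\]
everywhere on $M_t$. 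In the spherical case this uses that $r<\pi/2$, so that $s_K$ is increasing on the relevant range; for $K=-1$, $s_K=\sinh$ is increasing everywhere. Hence the hypothesis of Lemma~\ref{lem:space-form-gradient-reflection} holds at every point of $M_t$.

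Applying \eqref{eq:normal-from-reflection} and using the monotonicity of $s_K$ on $(0,\pi/2)$ once more, we get
\[
\langle\nu,\partial_r\rangle \ge \sqrt{1-\frac{s_K(c_0)^2}{s_K\bigl(R_0-2\pi s_K(c_0)\bigr)^2}} =: \sigma_*(c_0,R_0) > 0 .
\]
The orientation point deserves a brief check. The lemma's proof takes the sign with $\langle\nu,\partial_r\rangle>0$. This sign can be justified as in Proposition~\ref{prop:static-normal-control}: the quantity is nonvanishing and $M_t$ is connected, so it has constant sign, and at a point of minimal $r$ the outward normal equals $\partial_r$. One can also argue by continuity in $t$ from the initial positivity \eqref{eq:space-form-initial-angle}.

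Finally, as $c_0\to0$ with $R_0$ fixed, we have $s_K(c_0)\to0$ and $s_K\bigl(R_0-2\pi s_K(c_0)\bigr)\to s_K(R_0)>0$. Therefore the ratio tends to $0$ and $\sigma_*\to1$.

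The only delicate point is verifying the lemma's hypothesis $s_K(r)>s_K(c_0)$ uniformly in time, together with the monotonicity of $s_K$ needed to replace $r$ by its lower bound. Both are supplied by the bootstrap in Proposition~\ref{prop:space-form-radial-closeness}, which keeps $R_0/2<r<\pi/2$ in the spherical case.
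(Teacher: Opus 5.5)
Your proposal is correct and follows the paper's proof, which likewise combines the reflection estimate \eqref{eq:normal-from-reflection} with the lower radial bound of Proposition~\ref{prop:space-form-radial-closeness}. You also spell out details the paper leaves implicit: the check $s_K(r)>s_K(c_0)$ via the smallness condition and the monotonicity of $s_K$, the explicit $\sigma_*$ and its limit, and the sign of $\langle\nu,\partial_r\rangle$. For the sign, the continuity-in-time argument is the cleaner of your two justifications, since the minimal-$r$ argument additionally requires $o\in\Omega_t$.
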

	
	\begin{proof}
		The result follows directly from \eqref{eq:normal-from-reflection}
		and the lower bound on $r$ given by 
		Proposition~\ref{prop:space-form-radial-closeness}.
	\end{proof}

	\begin{cor}[Reflection and geometric \(C^1\)-closeness in space
		forms]
		\label{cor:space-form-reflection-C1-equivalence}
		Let \(K\in\{-1,1\}\). Within the volume-normalized and hemispherically
		localized class considered above, strong Alexandrov reflection in every
		direction and geometric \(C^1\)-closeness to a geodesic sphere
		determine each other quantitatively, up to a change of parameters.
		
		More precisely, sufficiently small annular width and normal oscillation
		give the common reflection threshold of
		Proposition~\ref{prop:initial-space-form-reflection}. Conversely, a
		common threshold tending to zero implies the radial and normal estimates
		of Proposition~\ref{prop:space-form-radial-closeness} and
		Corollary~\ref{cor:space-form-normal-control}, whose deviations from the
		reference geodesic sphere also tend to zero.
	\end{cor}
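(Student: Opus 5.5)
This is a packaging statement, so the plan is to assemble the two implications from results already proved, taking care that the hypotheses of each line up within the stated class. The class consists of smooth, closed, embedded hypersurfaces enclosing the volume of \(B_{R_0}^K(o)\), with \(o\) in the enclosed region. When \(K=1\) we also require that the hypersurfaces lie in the open hemisphere centred at \(o\), with \(R_0<\pi/2\).

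\emph{Forward direction.} Assume \(M\) satisfies \eqref{eq:space-form-initial-annulus} and \eqref{eq:space-form-initial-angle}, and fix a target threshold \(c_0\in(0,R_0)\). Proposition~\ref{prop:initial-space-form-reflection} gives \(\varepsilon_0,\theta_0\), depending only on \(R_0\) and \(c_0\), such that \(M\) reflects strictly up to \(P_{v,c_0}\) for every \(v\in\mathbb S_o^n\). The thing to check is that the proof of that proposition used nothing about the flow, only the annular and angular bounds together with Lemma~\ref{lem:reference-sphere-reflection}. So it applies to an arbitrary member of the class. Letting \(c_0\to0\) forces \(\varepsilon_0,\theta_0\to0\); read the other way, small annular width and small normal oscillation give a small common threshold.

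\emph{Converse.} Suppose \(M\) in the class reflects strictly up to \(P_{v,c}\) for every \(v\), with \(c\) small. I would redo the argument of Proposition~\ref{prop:space-form-radial-closeness} statically, without the time continuity, in three steps.
\begin{enumerate}
\item[(1)] Show that \(M\) is a radial graph about \(o\). Following the end of the proof of Proposition~\ref{prop:static-normal-control}, \eqref{eq:normal-from-reflection} makes \(\langle\nu,\partial_r\rangle\) nonvanishing wherever \(s_K(r)>s_K(c)\). By connectedness and the outward orientation at a point minimising \(r\), the sign is positive.
\item[(2)] Integrate the gradient bound \eqref{eq:radial-gradient-from-reflection} over \(\mathbb S_o^n\), which has diameter \(\pi\). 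Provided \(s_K(\rho)\) stays bounded away from \(s_K(c)\), this gives \(\rho_+-\rho_-\le 2\pi s_K(c)\).
\item[(3)] Use volume normalization and the monotonicity of ball volumes, \(\rho_-\le R_0\le\rho_+\), to conclude \(|r-R_0|\le 2\pi s_K(c)\).
\end{enumerate}
Corollary~\ref{cor:space-form-normal-control} then yields \(\langle\nu,\partial_r\rangle\ge\sigma_*(c,R_0)\to1\). Both deviations tend to zero as \(c\to0\).

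\emph{Main obstacle.} Step (1) needs an a priori lower bound \(r>R_0/2\) to ensure \(s_K(r)>s_K(c)\), and in the flow proof this came from continuity in time. Statically I would handle it like this. Either the class is taken to include the localisation \(r>R_0/2\) on \(M\), which is the ``hemispherically localized'' proviso and matches how the result is used. Or one gets it from a connectedness argument on the set where \(r\le R_0/2\): the points of \(M\) nearest to \(o\) satisfy \(Y^T=0\), so \eqref{eq:normal-from-reflection} holds there trivially, and the same reflection argument as Proposition~\ref{prop:static-radial-control}, adapted to the linear model, shows \(\rho_-\) cannot be small. I would state the corollary with the localisation included and note that it is automatically satisfied along the flow by Proposition~\ref{prop:space-form-radial-closeness}.
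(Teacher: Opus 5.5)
Your proposal is correct and follows the paper's proof. The paper also takes the forward direction from Proposition~\ref{prop:initial-space-form-reflection}, and the converse from Lemma~\ref{lem:space-form-gradient-reflection} together with the argument of Proposition~\ref{prop:space-form-radial-closeness}, without addressing the static lower bound on \(r\) that you rightly flag (along the flow it came from the time-continuity bootstrap). The bisector-reflection argument in your second option actually works and removes the need for any extra localization. If \(x_\pm\in M\) realize \(\rho_\pm\), strict reflection across the bisector of \(x_+\) and \(x_-\) forces its distance from \(o\) below \(c\). In the linear model this reads \(K\bigl(c_K(\rho_-)-c_K(\rho_+)\bigr)\le\langle x_+-x_-,x_+-x_-\rangle_K^{1/2}\,s_K(c)\), where \(\langle x_+-x_-,x_+-x_-\rangle_K^{1/2}=2s_K\bigl(\dist(x_+,x_-)/2\bigr)\le 2s_K\bigl((\rho_++\rho_-)/2\bigr)\), using the hemisphere when \(K=1\). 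Since the left-hand side equals \(2s_K\bigl((\rho_++\rho_-)/2\bigr)s_K\bigl((\rho_+-\rho_-)/2\bigr)\), this gives \(\rho_+-\rho_-\le 2c\), exactly as in Proposition~\ref{prop:static-radial-control}.
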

	
	\begin{proof}
		The first implication is
		Proposition~\ref{prop:initial-space-form-reflection}. For the converse,
		Lemma~\ref{lem:space-form-gradient-reflection} gives the tangential and
		normal control from the reflection threshold, while the
		volume-normalized radial estimate follows from the argument of
		Proposition~\ref{prop:space-form-radial-closeness}.
	\end{proof}
	
	The preceding corollary is the space-form counterpart of Corollary~\ref{cor:reflection-C1-equivalence}. The difference is that the initial reflection threshold is obtained perturbatively from the strict reflection property of the reference geodesic sphere, rather than from the explicit Euclidean formula \eqref{size_c0}. Once established, however, reflection provides the same type of first-order geometric control required in the curvature estimates.
	
	\subsection{Curvature estimates}
	\label{subsec:space-form-curvature-estimates}
	
	We now extend the curvature estimates of Section~\ref{sec:curvature-control}
	to the non-flat space forms. The proof is the same as in the Euclidean
	case, except for the presence of ambient-curvature terms. Since
	\(K\in\{-1,1\}\) is constant and the hypersurfaces remain in a fixed
	compact annulus with a uniform lower bound for
	\(\langle\nu,\partial_r\rangle\), all these additional terms are of
	lower order and can be absorbed into the constants. We only indicate the
	necessary modifications.
	
	With the notation from \eqref{defY}, the support function reads as
	\(
	u=\langle Y,\nu\rangle.
	\)
	By Proposition~\ref{prop:space-form-radial-closeness} and
	Corollary~\ref{cor:space-form-normal-control}, there exist constants
	\(r_*,r^*>0\) and \(\sigma_*>0\) such that
	\begin{equation}\label{eq:space-form-uniform-C1-control}
	r_*\le r\le r^*,
	\qquad
	\langle\nu,\partial_r\rangle\ge\sigma_*
	\end{equation}
	throughout the existence interval. In the spherical case, we also have
	\(r^*<\pi/2\). Consequently,
	\begin{equation}\label{eq:space-form-support-bounds}
	0<u_*\le u\le u^*<\infty.
	\end{equation}
	
	The evolution equations of the main geometric quantities take the following form, see, for example,
	\cite{Hu-sph, CabezasRivasMiquel, McCoyMixedVolume}.
	\begin{lemma}[Evolution equations in space forms] \label{evol-SF} Under \eqref{vp-mcf} it holds
		\begin{enumerate}
			\item[{\rm (a)}] $(\partial_t-\Delta)H
			=
			(H-h)(|A|^2+nK)$. \smallskip
			
			\item[{\rm (b)}]
			$(\partial_t-\Delta)|A|^2
			= 
			-2|\nabla A|^2+2|A|^2\big(|A|^2 - n K\big)
			+4KH^2-2 h \big(\!\operatorname{tr}(A^3) + K H\big).$ \smallskip
			
			\item[{\rm (c)}] $(\partial_t-\Delta)u
			=
			c_K(r)(h-2H)+|A|^2u.$
		\end{enumerate}
	\end{lemma}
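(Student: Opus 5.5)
The three identities are the standard first-variation formulas, adapted to the nonflat ambient. I would derive them from the general variation of a hypersurface moving with normal speed \(f\), then specialise to \(f=h-H\). The only genuinely ambient-dependent inputs are two. The first is the Gauss--Codazzi equations for constant sectional curvature \(K\), which enter through Simons' identity. The second is the fact that \(Y=s_K(r)\partial_r\) from \eqref{defY} is a closed conformal field, \(\overline\nabla_XY=c_K(r)X\) for every ambient vector \(X\). The last fact follows from \(c_K'=-Ks_K\) and the Jacobi-field description of the metric in geodesic polar coordinates. Throughout I keep the paper's convention: \(\nu\) is outward and the principal curvatures of \(\partial B^K_R(p)\) are positive. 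I would check every sign against the explicit shrinking or expanding geodesic sphere.

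The first step records the variation formulas for \(\partial_tX=f\nu\). These are
\[
\partial_tg_{ij}=2fh_{ij},\qquad
\partial_t\nu=-\nabla f,\qquad
\partial_th_i^j=-\nabla_i\nabla^jf-f\bigl(h_i^kh_k^j+K\delta_i^j\bigr).
\]
The term \(K\delta_i^j\) comes from the ambient curvature \(\overline{\mathrm{Rm}}(\nu,e_i,\nu,e_j)=K g_{ij}\). Tracing gives \(\partial_tH=-\Delta f-f(|A|^2+nK)\). With \(f=h-H\), and since \(h\) is spatially constant, this is \((\partial_t-\Delta)H=(H-h)(|A|^2+nK)\), which is (a). As a sanity check, on a geodesic sphere of radius \(R\) in the ordinary mean curvature flow we have \(H=n\,c_K(R)/s_K(R)\), and this is consistent with \(\dot R=-H\).

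For (b), I would use Simons' identity in a space form:
\[
\Delta h_{ij}=\nabla_i\nabla_jH+Hh_{ik}h^k_j-|A|^2h_{ij}+nKh_{ij}-KHg_{ij}.
\]
I would write \(\partial_t|A|^2=2h_j^i\partial_th_i^j\), substitute the variation formula with \(-\nabla\nabla f=\nabla\nabla H\), and replace \(\nabla\nabla H\) using Simons' identity. Combined with \(\Delta|A|^2=2h^{ij}\Delta h_{ij}+2|\nabla A|^2\), this gives
\[
(\partial_t-\Delta)|A|^2=-2|\nabla A|^2+2|A|^4-2nK|A|^2+2KH^2-2f\bigl(\operatorname{tr}A^3+KH\bigr).
\]
Inserting \(f=h-H\) produces the extra \(+2H\operatorname{tr}A^3+2KH^2\) contributions. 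The \(H\operatorname{tr}A^3\) pieces cancel against those from Simons, and the result is exactly the stated formula with \(4KH^2\). This bookkeeping of the \(K\)-terms and signs is the step I expect to be the main obstacle. I would verify the final coefficients on the umbilic case \(h_{ij}=\kappa g_{ij}\), where both sides can be computed directly from the ODE for the radius.

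For (c), set \(u=\langle Y,\nu\rangle\). The conformal property gives \(\nabla_iu=h_i^k\langle Y,e_k\rangle\), since the term \(c_K\langle e_i,\nu\rangle\) vanishes. Differentiating once more and using Codazzi (symmetry of \(\nabla A\) in space forms) gives
\[
\Delta u=\langle Y,\nabla H\rangle+c_KH-|A|^2u.
\]
For the time derivative,
\[
\partial_tu=\langle\overline\nabla_{f\nu}Y,\nu\rangle+\langle Y,\partial_t\nu\rangle=c_Kf-\langle Y,\nabla f\rangle=c_K(h-H)+\langle Y,\nabla H\rangle.
\]
Subtracting the two expressions gives \((\partial_t-\Delta)u=c_K(h-2H)+|A|^2u\). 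This reduces to Lemma~\ref{evol-eq}(b) when \(K=0\), since then \(c_K\equiv1\), which is a further consistency check.
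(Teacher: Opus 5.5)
Your derivation is correct. It is the standard computation that the paper does not carry out but delegates to the references it cites (Huisken, Cabezas-Rivas--Miquel, McCoy): the variation of \(h_i^j\) with the ambient term \(fK\delta_i^j\), Simons' identity with the space-form terms \(nKh_{ij}-KHg_{ij}\) plus Codazzi, and the closed conformal field \(\overline\nabla_XY=c_K(r)X\) for \(u\). All signs and coefficients match the paper's conventions. The only slip is in your intermediate display for (b), which drops the term \(-2H\operatorname{tr}(A^3)\) coming from Simons' identity. With that term restored, the \(\pm2H\operatorname{tr}(A^3)\) contributions cancel after substituting \(f=h-H\), exactly as your text (though not the display) says, leaving the stated \(4KH^2\).
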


	\begin{prop}[Uniform mean curvature bound]
		\label{prop:space-form-mean-curvature-bound}
		There exists a constant \(C<\infty\) such that
		\[
		|H|\le C
		\]
		for every time for which the solution is defined.
	\end{prop}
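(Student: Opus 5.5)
The plan is to repeat the Tso-type argument of Proposition~\ref{bound-H}, replacing the Euclidean support function by $u=\langle Y,\nu\rangle$ with $Y=s_K(r)\partial_r$. The lower-order terms generated by $nK$ and by the factor $c_K(r)$ will be absorbed using the uniform first-order bounds \eqref{eq:space-form-uniform-C1-control}--\eqref{eq:space-form-support-bounds}.

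\emph{Step 1: the nonlocal term.} I first need the analogue of Lemma~\ref{bound-h}. It comes from the space-form Minkowski identity
\[
\int_{M}Hu\,d\mu=n\int_M c_K(r)\,d\mu .
\]
The reference values are $\bar u:=s_K(R_0)$ and $\bar c:=c_K(R_0)$, and $\bar c/\bar u=\coth_K(R_0)$ is the mean curvature of $S^K_{R_0}(o)$ divided by $n$. By Proposition~\ref{prop:space-form-radial-closeness} and Corollary~\ref{cor:space-form-normal-control}, both $|u-\bar u|$ and $|c_K(r)-\bar c|$ are bounded by some $\eta_0=\eta_0(c_0,R_0)$ that tends to zero with $c_0$. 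Writing
\[
\bar u\,h=\frac1{|M_t|}\int H(\bar u-u)\,d\mu+\frac{n}{|M_t|}\int c_K(r)\,d\mu ,
\]
I get
\[
\Big|h-\frac{n\bar c}{\bar u}\Big|\le \frac{\eta_0}{\bar u}\,\big(\max|H|+n\big).
\]
This has the same structure as in Lemma~\ref{bound-h}: a fixed constant plus a small multiple of $\max|H|$.

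\emph{Step 2: the evolution of $Z$.} Set $\alpha=\bar u-2\eta_0$, so that $\eta_0\le u-\alpha\le 3\eta_0$, and let $Z=H/(u-\alpha)$. From Lemma~\ref{evol-SF}(a),(c) I compute
\[
(\partial_t-\Delta)Z
=-\frac{\alpha Z+c_K(r)h}{u-\alpha}|A|^2
-\frac{c_K(r)h}{u-\alpha}Z
+2c_K(r)Z^2
+\frac{nK(H-h)}{u-\alpha}
+\frac{2}{u-\alpha}\langle\nabla u,\nabla Z\rangle .
\]
This is the equation \eqref{equation-Z} with $h$ replaced by $c_K h$ and $2Z^2$ by $2c_KZ^2$, plus the new term $nK(H-h)/(u-\alpha)$. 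In the relevant range $c_K(r)$ is positive and bounded above and below; for $K=1$ this uses $r^*<\pi/2$. So the sign analysis is unchanged, with constants altered by fixed factors.

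\emph{Step 3: the four cases.} The new term equals $nKZ-nKh/(u-\alpha)$. At an extremum of $Z$ it is at most of order $|Z|/\eta_0^{0}+|h|/\eta_0$. The dominant good terms there are
\[
-\frac{\alpha Z|A|^2}{u-\alpha}\sim -\frac{\alpha}{n}\,\eta_0\,Z^3
\quad\text{or}\quad
\frac{h\,|A|^2}{u-\alpha},
\]
which are cubic in $Z$, so they swallow the linear and constant contributions once $|Z|$ is large. The contribution $|h|/\eta_0$ is controlled by Step 1 as $C/\eta_0+C\eta_0(Z_{\max}-Z_{\min})$, which is linear in $Z$ and hence also absorbed.

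Following the four cases $h\ge0$ / $h\le0$ and $Z_{\max}$ / $Z_{\min}$ exactly as before, I expect thresholds of the form \eqref{ineq-zmax}--\eqref{ineq-zmin}, namely $\gamma_i$ and $\delta_i$ depending on $n$, $R_0$, $\eta_0$, with $\delta_i=O(\eta_0)$. Choosing $c_0$ small makes $\delta_i<1$. Then the barrier $\pm\Lambda$ argument gives $|Z|\le\Lambda$, and therefore $|H|\le 3\eta_0\Lambda$.

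\emph{Main obstacle.} The delicate case is $h\le0$, $K=1$, at $Z_{\max}$. Here the term $-nh/(u-\alpha)$ is positive and of size $|h|/\eta_0$. For $K=-1$ the sign of the new term is favourable in that case. For $K=1$ I must check that Step 1's bound on $|h|$, with its small coefficient $\eta_0$ in front of $\max|H|$, keeps the resulting coefficient of $Z_{\max}$ below the cubic gain. Plausibly this holds because $|A|^2\ge H^2/n\gtrsim \eta_0^2Z^2$ already dominates. If not, I would enlarge the cutoff $n/\eta_0^2$ in the $h\le0$ case so that $|A|^2$ dominates every linear term.
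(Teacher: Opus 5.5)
Your proposal is correct and takes essentially the same approach as the paper. Both use the Tso quotient $Z=H/(u-\alpha)$ with $u=\langle s_K(r)\partial_r,\nu\rangle$, the space-form Minkowski identity to get $|h|\le C(1+\delta\max|H|)$ with $\delta$ small, and the four-case barrier argument of the Euclidean proposition with the $K$-terms treated as lower order.

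Two small remarks. First, the coefficient of $|A|^2$ in the evolution of $Z$ is $-(\alpha Z+h)/(u-\alpha)$, not $-(\alpha Z+c_K h)/(u-\alpha)$, because this term comes from the evolution of $H$ rather than of $u$; the slip is harmless since $c_K$ is positive and bounded. Second, the case you flag closes cleanly if you group the terms as $-\frac{h}{u-\alpha}(|A|^2+nK)$. This is comparable to $-\frac{h}{u-\alpha}|A|^2$ once $|A|^2\ge 2n$, which holds in the regime $Z_{\max}\ge n/\eta_0^2$, so it is absorbed exactly as in the Euclidean argument.
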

	
	\begin{proof}
		Choose \(\alpha>0\) such that
		\(
		u-\alpha\ge\eta_*>0
		\)
		along the flow, and define
		\[
		Z=\frac{H}{u-\alpha}.
		\]
		Using Lemma \ref{evol-SF} (a) and (c), we obtain
		\begin{align}
		(\partial_t-\Delta)Z
		={}&
		-\frac{\alpha Z+h}{u-\alpha}|A|^2
		-\frac{h}{u-\alpha}Z
		+2Z^2 +\frac{2}{u-\alpha}\langle\nabla u,\nabla Z\rangle
		+E_K,
		\label{eq:space-form-Z-euclidean-part}
		\end{align}
		where 
		\[
		E_K
		=
		nKZ-\frac{nKh}{u-\alpha}
		-
		\frac{(c_K(r)-1)h}{u-\alpha}Z
		+
		2(c_K(r)-1)Z^2,
		\]
		and the remaining terms on the right-hand side of \eqref{eq:space-form-Z-euclidean-part} are the same as in the Euclidean calculation.  By
		\eqref{eq:space-form-uniform-C1-control} and
		\eqref{eq:space-form-support-bounds}, the coefficients in \(E_K\) are
		uniformly bounded, hence
		\begin{equation}\label{eq:space-form-Z-error}
		|E_K|
		\le
		C\bigl(|Z|+Z^2+|h|(1+|Z|)
		\bigr).
		\end{equation}

	 Writing $\delta:=\|u-s_K(R_0)\|_{L^\infty(M_t)},$ the space form version of the Minkowski formula gives $|h|\leq C\bigl(1+\delta\|H\|_{L^\infty(M_t)}\bigr)$, as in Lemma \ref{bound-h}. By Proposition~\ref{prop:space-form-radial-closeness} and
			Corollary~\ref{cor:space-form-normal-control}, $\delta$ can be made arbitrarily small by choosing the initial parameters sufficiently small.
			Hence the part of $h$ which grows with $\|Z\|_{L^\infty}$ has an arbitrarily small coefficient, while the remaining contribution is of lower order.

		Thus the leading term remains the one  involving \(Z |A|^2\), because of the estimate
		\[
		|A|^2\ge\frac{H^2}{n}
		=
		\frac{(u-\alpha)^2Z^2}{n}.
		\]
		This term provides the same cubic terms in the upper and lower
maximum-principle estimates as in the Euclidean argument, while the additional
		\(K\)-terms are of lower order.

		  The maximum-principle argument of Proposition~\ref{bound-H} therefore applies, with only a change in the constants. In this way we obtain uniform bounds from above and below on $Z$, which in turn yield the desired assertion on $H$.
	\end{proof}

	\begin{prop}[Uniform bound for the shape operator]
		\label{prop:space-form-shape-operator-bound}
		There exists a constant \(C<\infty\) such that
		\[
		|A|^2\le C
		\]
		for every time for which the solution is defined.
	\end{prop}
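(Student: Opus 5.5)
The plan is to repeat the Ecker--Huisken type argument of Proposition~\ref{shape-bdd}, with the radial vector $x/|x|$ replaced by $\partial_r$ and the position vector by the conformal field $Y=s_K(r)\partial_r$. Set $q=\langle\nu,\partial_r\rangle$ and $v=q^{-1}$. By \eqref{eq:space-form-uniform-C1-control} we have $r_*\le r\le r^*$ and $1\le v\le v_*:=\sigma_*^{-1}$, with $r^*<\pi/2$ when $K=1$. By Proposition~\ref{prop:space-form-mean-curvature-bound} together with the Minkowski-type bound for $h$, we also have $|H|+|h|\le C$.

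The first step is to derive the evolution of $q$ in $N_K^{n+1}$. We write $u=s_K(r)q$. Lemma~\ref{evol-SF}(c) gives the evolution of $u$, and the standard computation gives
\[
(\partial_t-\Delta)r=\frac{c_K(r)}{s_K(r)}\bigl(hq\bigr)\cdot\frac{s_K(r)}{c_K(r)}-\frac{c_K(r)}{s_K(r)}\bigl(n-1+q^2\bigr)\cdot(\ldots),
\]
that is, $hq$ minus a term of the form $\frac{c_K}{s_K}(n-|\nabla r|^2)$. Combining these with $\nabla s_K(r)=c_K\nabla r$, we get
\[
(\partial_t-\Delta)v=-|A|^2v-2v^{-1}|\nabla v|^2+b\,\langle\nabla r,\nabla v\rangle+\mathfrak R_K .
\]
Here $b=2c_K(r)/s_K(r)$ is uniformly bounded because $r\ge r_*>0$. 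The remainder $\mathfrak R_K$ collects terms that are bilinear in $\{H,h\}$ and smooth functions of $(r,v)$, together with ambient terms such as $K v$ and $c_K(r)^2/s_K(r)^2$ expressions. The only structural feature needed is the exact coefficient $-|A|^2v$ of the leading term, since this produces the good $-2k\mathscr F^2$. All other terms only need to be bounded, and $|\mathfrak R_K|\le C$ follows from the $C^1$ control and the bound on $H$ and $h$.

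Next I define $\mathscr F=|A|^2\varphi(v^2)$ with $\varphi(s)=s/(1-ks)$ and $kv_*^2<1$, exactly as in \eqref{def-EH}. Using Lemma~\ref{evol-SF}(b), the evolution of $|A|^2$ differs from the Euclidean one only by the terms
\[
-2nK|A|^2+4KH^2-2hKH .
\]
After multiplication by $\varphi(v^2)$, these are bounded by $C\mathscr F+C$. The computation of \eqref{eq:evolution-mathscr-F-before-drift} then goes through verbatim: the cross gradient terms combine into $-\frac{2\varphi'}{\varphi}\langle\nabla v^2,\nabla\mathscr F\rangle$ and $-2k|\nabla v|^2\mathscr F/(1-kv^2)^2$, and the reaction terms give $-2k\mathscr F^2$. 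The drift term $b\langle\nabla r,\nabla v\rangle$ is handled as in \eqref{eq:radial-drift-rewriting}--\eqref{eq:radial-remainder-estimate}: rewrite it as a drift $b\langle\nabla r,\nabla\mathscr F\rangle$ plus $C\varphi|A||\nabla A|$, then absorb the latter by Young's inequality into $-2\varphi|\nabla A|^2$. The cubic term $-2h\operatorname{tr}(A^3)\varphi$ is absorbed into $-k\mathscr F^2$ via $C\mathscr F^{3/2}\le\varepsilon\mathscr F^2+C_\varepsilon$.

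This yields
\[
\frac{d}{dt}\mathscr F_{\max}\le -c\mathscr F_{\max}^2+C\mathscr F_{\max}+\widetilde C ,
\]
and the maximum principle bounds $\mathscr F$, hence $|A|^2\le \mathscr F/c_\varphi$.

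The main obstacle is the first step: checking that the evolution of $v$ in the curved setting has exactly the leading term $-|A|^2v$ and no uncontrolled term such as $|A|\,v$ times an unbounded factor. Gradient terms of the form $\langle\nabla r,\nabla v\rangle$ are harmless, but a term like $h_{ij}\nabla_i r\nabla_j r$ could appear. If it does, I would bound it by $|A|\le \delta|A|^2+C_\delta$ and absorb it after multiplication by $\varphi'$, using the margin in the constant $k$.
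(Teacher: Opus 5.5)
Your proposal is correct and follows the paper's proof essentially step for step. It uses the same evolution of $v=\langle\nu,\partial_r\rangle^{-1}$ with a bounded drift (your $b\nabla r$, $b=2c_K/s_K$, plays the role of the paper's $\mathcal X$) and a bounded remainder, the same weight $\mathscr F=|A|^2\varphi(v^2)$, the same bound $C(1+|A|^2)$ for the extra $K$-terms of Lemma~\ref{evol-SF}(b), and the same maximum-principle ODE. The term $h_{ij}\nabla_i r\nabla_j r$ you worry about does not actually arise: going through $u=s_K(r)q$ and $(\partial_t-\Delta)r=hq-\frac{c_K}{s_K}(n-1+q^2)$, the second fundamental form enters only via $|A|^2u$ and $H$. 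Your first display for $(\partial_t-\Delta)r$ is garbled, but the verbal formula right after it is the correct one.
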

	
	\begin{proof}
		Setting
		\(
		q=\langle\nu,\partial_r\rangle\) and 
		$v=q^{-1}$, \eqref{eq:space-form-uniform-C1-control} ensures that
		\[
		1\le v\le v_*:=\sigma_*^{-1}.
		\]
		The evolution equation for \(v\) has the same principal terms as in the
		Euclidean case:
		\begin{equation}\label{eq:space-form-v-evolution}
		(\partial_t-\Delta)v
		=
		-|A|^2v-2v^{-1}|\nabla v|^2
		+\langle\mathcal X, \nabla v\rangle +\mathcal R_K,
		\end{equation}
		where \(\mathcal X\) is a uniformly bounded tangential vector field and
		\(\mathcal R_K\) contains the terms involving \(H\), \(h\), the radial
		coefficients \(s_K\), \(c_K\), and the curvature \(K\).  
		Notice that no term of order higher than \(|A|^2v\) is introduced by the
		ambient curvature.  More precisely, $\mathcal X$ and $R_K$ depend only on the uniformly controlled quantities $r$, $v$, $H$, and $h$, and on the bounded radial coefficients $s_K(r)$ and $c_K(r)$. In particular, they contain no uncontrolled component of $A$, and hence \[ |\mathcal X|+|\mathcal R_K|\le C. \]
		
		Choose \(k>0\) such that \(kv_*^2<1\), and define $\mathscr F$ as in \eqref{def-EH}. 
		As in the Euclidean case, \(\mathscr F\) is uniformly equivalent to \(|A|^2\).
		Combining the evolution from Lemma \ref{evol-SF} (b) and
		\eqref{eq:space-form-v-evolution}, computing as in the proof of
		Proposition~\ref{shape-bdd}, we reach 
		\begin{equation}\label{eq:space-form-F-evolution}
		(\partial_t-\Delta) \mathscr F
		\le
		-c \mathscr F^2+C \mathscr F+C
		+ \langle\mathcal X_1,\nabla \mathscr F\rangle
		-c_1|\nabla v|^2 \mathscr F,
		\end{equation}
		for uniform constants \(c,c_1>0\) and \(C<\infty\). Indeed, the
		additional terms in Lemma \ref{evol-SF} (b) satisfy
		\[
		\left|
		-2nK|A|^2+4KH^2-2KhH
		\right|
		\le
		C(1+|A|^2),
		\]
		and therefore contribute at most \(C(\mathscr F+1)\) to
		\eqref{eq:space-form-F-evolution}. Similarly, the terms collected in
		\(\mathcal R_K\) contribute at most \(C \mathscr F\).
		
		At a spatial maximum of \(\mathscr F\), the drift term vanishes and the term
		involving \(|\nabla v|^2 \mathscr F\) is nonpositive. Hence an equation of the form \eqref{Fmax-def} also holds in this setting. The maximum principle therefore gives a uniform bound for \(\mathscr F\),
		and hence for \(|A|^2\).
	\end{proof}
	
	Since the ambient space has constant
	curvature, the evolution equations for the covariant derivatives of
	\(A\) differ from their Euclidean counterparts only by lower-order
	contractions involving \(K\), \(A\), and its derivatives. The standard
	Bernstein estimates therefore imply that, for every integer \(m\ge0\)
	and every \(\tau>0\),
	\[
	\sup_{M_t}|\nabla^mA|
	\le
	C_{m,\tau}
	\qquad\text{for every }t\ge\tau.
	\]
	Consequently, the solution exists for every \(t\ge0\).
	
	\subsection{Subsequential convergence}
	\label{subsec:space-form-subsequential-convergence}
	
	We next use the area dissipation identity and the uniform estimates
	obtained above to identify the possible limiting hypersurfaces.
	
	\begin{prop}[Subsequential convergence]
		\label{prop:space-form-subsequential-convergence}
		There exist a sequence \(t_j\to\infty\) and a point
		\(p_*\in N_K^{n+1}\) such that
		\[
		M_{t_j}
		\longrightarrow
		S_{R_0}^K(p_*):=\partial B_{R_0}^K(p_*)
		\]
		smoothly.
	\end{prop}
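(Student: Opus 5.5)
The plan mirrors the Euclidean argument of Proposition~\ref{prop:euclidean-eventual-convexity}. Along \eqref{vp-mcf} in \(N_K^{n+1}\) the enclosed volume is preserved, and the first variation of area gives
\[
\frac{d}{dt}|M_t|=-\int_{M_t}H(h-H)\,d\mu_t=-\int_{M_t}(H-h)^2\,d\mu_t ,
\]
since \(\int(H-h)\,d\mu_t=0\). The area is nonincreasing and nonnegative, so integrating in time gives \(\int_0^\infty\int_{M_t}(H-h)^2\,d\mu_t\,dt<\infty\). Hence there is a sequence \(t_j\to\infty\) with
\[
\int_{M_{t_j}}(H-h)^2\,d\mu_{t_j}\to0 .
\]
We may take \(t_j\ge1\), so that the Bernstein-type bounds \(\sup|\nabla^mA|\le C_{m,1}\) from the end of Subsection~\ref{subsec:space-form-curvature-estimates} are available.

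Next, write \(M_t\) as a radial graph \(\exp_o(\rho(\vartheta,t)\vartheta)\) over \(\mathbb S_o^n\). By Proposition~\ref{prop:space-form-radial-closeness}, \(\rho\) stays in a fixed compact interval \([r_*,r^*]\), with \(r^*<\pi/2\) when \(K=1\). Lemma~\ref{lem:space-form-gradient-reflection} and \eqref{eq:space-form-uniform-C1-control} give a uniform bound on \(|\nabla^\sigma\rho|\). The second fundamental form of a radial graph in a space form is expressed through \(\nabla^{\sigma,2}\rho\), \(\nabla^\sigma\rho\), \(s_K(\rho)\), \(c_K(\rho)\) and \(v\), with a uniformly elliptic dependence on the Hessian because \(v\) and \(\rho\) are controlled. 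So the bounds on \(\nabla^mA\) yield uniform \(C^{m+2}\) bounds on \(\rho(\cdot,t)\) for \(t\ge1\), by induction on \(m\). Arzel\`a--Ascoli then gives a subsequence with \(\rho(\cdot,t_j)\to\rho_\infty\) smoothly. The limit \(M_\infty\) is a smooth, closed, embedded, star-shaped radial graph lying in the same annulus; embeddedness is automatic for a radial graph. It encloses volume \(\operatorname{vol}(B^K_{R_0}(o))\), since volume depends continuously on \(\rho\) in \(C^0\). We also pass to a further subsequence with \(h(t_j)\to h_\infty\), which is possible because \(|h|\le\sup|H|\le C\) by Proposition~\ref{prop:space-form-mean-curvature-bound}.

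Smooth convergence, together with the areas being bounded below, turns the vanishing of \(\int(H-h)^2\) into \(H_{M_\infty}\equiv h_\infty\). So \(M_\infty\) is a closed embedded CMC hypersurface. For \(K=-1\), Alexandrov's theorem in hyperbolic space shows that \(M_\infty\) is a geodesic sphere. For \(K=1\) we use the hemisphere version of Alexandrov's theorem: a closed embedded CMC hypersurface contained in an open hemisphere is a geodesic sphere. This applies because \(M_\infty\subset\overline{B}^K_{r^*}(o)\) with \(r^*<\pi/2\). One can also argue directly with the moving-plane method inside the hemisphere, as in the reference cited above on moving planes in space forms. For \(n=1\), closed embedded curves of constant geodesic curvature in \(\mathbb H^2\) or in a hemisphere of \(\mathbb S^2\) are geodesic circles. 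Finally, the limit sphere encloses volume \(\operatorname{vol}(B^K_{R_0}(o))\), and geodesic ball volume is strictly monotone in the radius, so its radius is \(R_0\). Thus \(M_\infty=S^K_{R_0}(p_*)\) for some \(p_*\).

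The main obstacle is justifying the classification step in the spherical case. The hemisphere restriction is needed there, and it is exactly what Proposition~\ref{prop:space-form-radial-closeness} provides for all times, hence also for the limit. A secondary point is the translation of the curvature derivative bounds into \(C^k\) bounds on \(\rho\). This only needs the uniform lower bound on \(\langle\nu,\partial_r\rangle\) and the compact radial range, and I would treat it as routine.
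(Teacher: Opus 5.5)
Your proposal is correct and follows essentially the same route as the paper: area dissipation along a sequence $t_j\to\infty$, uniform radial-graph bounds giving smooth subsequential compactness, Alexandrov's theorem in $\mathbb H^{n+1}$ or in an open hemisphere to identify the constant-mean-curvature limit, and volume preservation to fix the radius $R_0$. The only blemish is a sign slip in your first display. The first variation should read $\frac{d}{dt}|M_t|=\int_{M_t}H(h-H)\,d\mu_t$, which then equals $-\int_{M_t}(H-h)^2\,d\mu_t$ as you wrote.
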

	
	\begin{proof}
		The proof is the same as in the first part of 
		Proposition~\ref{prop:euclidean-eventual-convexity}. Indeed, the
		area dissipation identity is unchanged, and therefore there exists a
		sequence \(t_j\to\infty\) such that \eqref{eq:vanishing-dissipation-sequence} holds.
		
		The radial and normal estimates, together with the higher-order curvature
		bounds proved above, give smooth compactness for the radial graph
		functions
		\[
		M_t
		=
		\left\{
		\exp_o\bigl(\rho(\vartheta,t)\vartheta\bigr):
		\vartheta\in\mathbb S_o^n
		\right\}.
		\]
		Thus, after passing to a subsequence, \(M_{t_j}\) converges smoothly to
		a closed embedded constant-mean-curvature hypersurface \(M_\infty\).
		
		By Alexandrov's theorem in \(N_K^{n+1}\), \(M_\infty\) is a geodesic
		sphere. In the spherical case, the fixed-hemisphere assumption ensures
		that the corresponding version of the theorem applies. Finally, volume
		preservation and the strict monotonicity of the volume of geodesic balls
		show that its radius is \(R_0\). Hence the statement follows.
	\end{proof}

	\subsection{Eventual convexity and exponential convergence}
	\label{subsec:space-form-exponential-convergence}
	
	The preceding proposition provides a late time slice which is arbitrarily
	close in the smooth topology to a geodesic sphere. 
	
	We now complete the proof of exponential convergence and at the same time
	show that convexity is eventually attained, although no convexity or
	mean-convexity is assumed initially. Smooth subsequential convergence to a
	geodesic sphere implies that a sufficiently late time slice is uniformly
	strictly convex. In Euclidean and hyperbolic space, the conclusion then
	follows by restarting the flow and applying the corresponding convergence
	theorems for convex hypersurfaces. In the spherical case, where convexity
	is not preserved in general by \eqref{vp-mcf}, we instead use
	the evolution of \(|A|^2/H^2\). The ambient-curvature term in this
	evolution has a favourable sign near a geodesic sphere and yields
	exponential decay of the traceless second fundamental form.
	
	\begin{prop}[Eventual convexity and exponential convergence]
		\label{prop:eventual-convexity-convergence}
		Let \(K\in\{-1, 1\}\), and assume in the spherical case that
		\(R_0<\pi/2\). Then there exist \(T<\infty\) and
		\(p_\infty\in N_K^{n+1}\) such that \(M_t\) is strictly convex ($h$-convex in case $K = -1$) for every
		\(t\ge T\) and
		\[
		M_t\longrightarrow \partial B_{R_0}^K(p_\infty)
		\]
		exponentially in the smooth topology.
	\end{prop}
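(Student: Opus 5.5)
We start from Proposition~\ref{prop:space-form-subsequential-convergence}: there is a time $t_{j}$ at which $M_{t_j}$ is $C^k$-close, for any prescribed $k$ and any prescribed tolerance, to a geodesic sphere $\partial B^K_{R_0}(p_*)$. The principal curvatures of that sphere are all equal to $c_K(R_0)/s_K(R_0)$. For $K=-1$ this is $\coth R_0>1$, and for $K=1$ it is $\cot R_0>0$ since $R_0<\pi/2$. Hence, for $j$ large, $M_{t_j}$ is strictly convex. When $K=-1$ it is moreover strictly horospherically convex, i.e. $\kappa_1>1$ with a uniform margin. Because the solution is unique, the flow from $t_j$ on is just the flow restarted at $M_{t_j}$.

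\emph{Hyperbolic case.} We apply the convergence theorem of \cite{CabezasRivasMiquel} for strictly $h$-convex initial data. It gives that $h$-convexity is preserved for $t\ge t_j$ and that the flow converges exponentially in $C^\infty$ to a geodesic sphere $\partial B^{-1}_{R}(p_\infty)$. Volume preservation and the strict monotonicity of the volume of geodesic balls force $R=R_0$. Since $h$-convexity implies strict convexity, we may take $T=t_j$.

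\emph{Spherical case, $n\ge2$.} We run a self-contained bootstrap with the pinching quantity $f=|A|^2/H^2-1/n$, which measures the traceless part $|\mathring A|^2/H^2$. Its evolution follows from Lemma~\ref{evol-SF}(a),(b). It has the familiar form
\[
(\partial_t-\Delta)f\le \tfrac{2}{H}\langle\nabla H,\nabla f\rangle-\tfrac{2h}{H^3}\bigl(H\operatorname{tr}A^3-|A|^4\bigr)-\tfrac{2nK}{H^2}|\mathring A|^2+(\text{lower order}).
\]
For $K=1$ the term $-2n|\mathring A|^2/H^2=-2nf$ has the favourable sign. Near an umbilic configuration, $H\operatorname{tr}A^3-|A|^4=O(|\mathring A|^2H^2\cdot|\mathring A|/H)$ and $|h-H|$ is small, so the $h$-term is $o(f)$.

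The plan is a continuity argument. As long as $M_t$ remains $C^2$-close to some geodesic sphere of radius $R_0$ (so that $H\ge c>0$, $|h-H|\le\eta$, and $f\le\eta$), the maximum principle gives $f_{\max}(t)\le f_{\max}(t_j)e^{-n(t-t_j)}$. Exponential decay of $\mathring A$ in $C^0$, combined with the uniform higher-derivative bounds and interpolation, yields exponential decay of $\mathring A$ in every $C^k$. From this we get exponential decay of $\|H-h\|_{C^k}$ via Codazzi ($\nabla H$ is controlled by $\nabla\mathring A$). The normal velocity is therefore integrable in time, $M_t$ converges exponentially to a limit, and the limit is a CMC sphere of radius $R_0$. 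Integrability of the velocity also shows that the closeness hypothesis is never violated once $j$ is large, which closes the bootstrap. Convexity for $t\ge T$ follows because $f$ small together with $H>0$ gives $\kappa_1>0$.

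\emph{Spherical case, $n=1$.} We replace the pinching argument by an energy argument. Set $E(t)=\int(\kappa-h)^2ds$, which is the area dissipation. We linearise near the circle: on the graph $\rho=R_0+w$, volume preservation removes the constant mode and the translation/rotation modes are neutral, so the remaining modes are damped. From this we derive $E'\le-\lambda E$ for $t$ large, and then conclude as in the case $n\ge2$.

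\textbf{Main obstacle.} The hardest step is the spherical bootstrap. We must make sure that the reaction terms involving $h$, $\operatorname{tr}A^3$, and the lower-order $K$-contributions are genuinely $o(f)$ uniformly while closeness persists. We must also ensure that closeness to \emph{some} sphere, possibly with a slowly drifting centre, is maintained, which requires the time-integrability estimate to be set up before, not after, the decay of $f$. We would handle this by using a single bootstrap time that controls $f$, $\|H-h\|_{C^k}$, and the accumulated displacement simultaneously.
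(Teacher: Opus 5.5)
Your overall route is the paper's: restart from a late, uniformly convex slice; cite the $h$-convex convergence theorem when $K=-1$; run a pinching bootstrap on $|A|^2/H^2$ when $K=1$, $n\ge2$; and use $E=\int(\kappa-h)^2ds$ for spherical curves. The $n\ge2$ chain (decay of $f$, interpolation, Codazzi, integrability of the velocity, closing the bootstrap) matches the paper. The curve case, however, has a genuine gap.

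\textbf{Spherical curves.} ``The translation modes are neutral'' is the obstruction, not the resolution. With $w=\kappa-h$,
\[
E'(t)=-2\int_{M_t}|\partial_sw|^2\,ds+2\int_{M_t}(\kappa^2+1)w^2\,ds-\int_{M_t}\kappa w^3\,ds .
\]
Near $S_*$ the coefficient $\kappa^2+1$ is close to $a_0=\sin^{-2}R_0$. This is exactly the first nonzero eigenvalue of $-\partial_s^2$ on $S_*$, which is intrinsically a circle of radius $\sin R_0$. So $\int w\,ds=0$ alone only gives $E'\le C\varepsilon E$, i.e.\ no decay. To get $E'\le-\lambda E$ you must show that $w$ is (almost) orthogonal to the first eigenspace. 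Your linearisation over a fixed centre gives this only up to quadratic errors in $\rho-R_0$. Those errors are not controlled by $E$ unless you re-centre or use an exact identity.

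The paper supplies this step exactly. For every ambient Killing field $Y$, invariance of length and enclosed area under the isometries generated by $Y$ gives $\int_{M_t}(\kappa-h)\langle Y,\nu\rangle\,ds=0$. The normal components of the two centre-moving Killing fields converge to the first eigenfunctions of $S_*$, and the next eigenvalue $4a_0$ then yields $E'\le-3a_0E$. The paper also first upgrades subsequential to full convergence $M_t\to S_*$, by re-centring at $p_*$ and using reflection thresholds $c_j\to0$. Your bootstrap could replace that step once the energy decay is in place.

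\textbf{Two corrections.} First, in the pinching step the $h$-term is not $o(f)$. Since
\[
H\operatorname{tr}(A^3)-|A|^4=\sum_{i<j}\kappa_i\kappa_j(\kappa_i-\kappa_j)^2\approx\tfrac{H^2}{n}|\mathring A|^2,
\]
this quantity is quadratic, not cubic, in $\mathring A$, so the $h$-term is of the same order as the $K$-term. Your decay estimate survives only because the term is \emph{nonpositive} when $h>0$ and $\kappa_i\ge m>0$, both of which your bootstrap hypotheses guarantee. That is exactly how the paper uses it, via $H\operatorname{tr}(A^3)-|A|^4\ge nm^2|\mathring A|^2$. Second, for $K=-1$, $n=1$ the paper invokes \cite{wei_yang_VPF_Hyperbolic} for convex curves in $\mathbb H^2$ rather than \cite{CabezasRivasMiquel}, so make sure your citation covers curves.
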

	
	\begin{proof}
		By Proposition~\ref{prop:space-form-subsequential-convergence}, there exist a
		sequence \(t_j\to\infty\) and \(p_*\in N_K^{n+1}\) such that
		\(
		M_{t_j}\to S_*:=\partial B_{R_0}^K(p_*)
		\)
		smoothly. Setting
		\(
		\lambda_0=\frac{c_K(R_0)}{s_K(R_0)},
		\) the Weingarten
		operator of \(S_*\) is
		\(
		h_i^j=\lambda_0\delta_i^j.
		\)
		Since \(\lambda_0>0\), smooth convergence implies that, for all
		sufficiently large \(j\),
		\[
		\kappa_i(\cdot,t_j)\ge\frac{\lambda_0}{2}>0,
		\qquad i=1,\ldots,n.
		\]
		Thus \(M_{t_j}\) is uniformly strictly convex. If \(K=-1\), then
		\(\lambda_0=\coth R_0>1\), and we may choose \(j\) so large that
		\[
		\left|
		\kappa_i(\cdot,t_j)-\coth R_0
		\right|
		<
		\frac{\coth R_0-1}{2}
		\]
		for every \(i=1,\ldots,n\). Consequently,
		\[
		\kappa_i(\cdot,t_j)
		>
		\frac{1+\coth R_0}{2}>1,
		\qquad i=1,\ldots,n,
		\]
		and \(M_{t_j}\) is strictly convex by horospheres.
		
		Fix one such time \(t_0=t_j\). If \(K=-1\), the time slice \(M_{t_0}\) is strictly convex by
		horospheres. The convergence theorem for \eqref{vp-mcf} in hyperbolic space \cite{CabezasRivasMiquel} implies that the time-shifted solution
		remains convex by horospheres and converges exponentially to a geodesic
		sphere. The enclosed volume is preserved, and therefore the limiting geodesic
		sphere has radius \(R_0\). For the exponential convergence in the hyperbolic plane, one only needs $\kappa(\cdot, t_j) > 0$ and then invoke the corresponding convexity theorem in \cite{wei_yang_VPF_Hyperbolic}.

		 It remains to prove the result when \(K=1\). In this case, we could invoke the convergence theorem by Hartley \cite{Hartley}. However, since that result relies on deep  arguments using analytic semigroups and centre manifold theory, we provide a  direct independent proof, which exploits the stronger properties of the solution obtained above.	
			
			We treat separately the cases $n \geq2$ and $n=1$.
		If $n \geq 2$, we give a direct pinching
		argument for  
		$Q=\frac{|A|^2}{H^2}$. 	
		
		Since \(M_{t_0}\) is arbitrarily close to \(S_*\) in \(C^\infty\), after
		increasing \(t_0\) we may assume that
		\begin{equation}\label{eq:initial-spherical-neighbourhood}
		\frac{\lambda_0}{2}\le\kappa_i\le2\lambda_0,\qquad
		\frac{n\lambda_0}{2}\le H,h\le2n\lambda_0,\qquad
		\frac hH\le\frac32
		\end{equation}
		on \(M_{t_0}\), where \(\lambda_0=\cot R_0\).

		By Lemma \ref{evol-SF} (a) and (b) (see also \cite[Lemma 5.1]{McCoyMixedVolume}), one has
		\begin{align}
		(\partial_t-\Delta)Q
		={}&\frac2H\langle\nabla H,\nabla Q\rangle
		-\frac2{H^4}|H\nabla A-A\otimes\nabla H|^2 \notag\\
		&+\frac{2h}{H^3}\bigl(|A|^4-H\operatorname{tr}(A^3)\bigr)
		-2n\left(2-\frac hH\right)\left(Q-\frac1n\right).
		\label{eq:space-form-Q-evolution}
		\end{align}
		Indeed, the \(K\)-terms in the quotient evolution for $K = 1$ are
		\[
		-\frac{2n |A|^2}{H^2}+4-\frac{2 h}{H}
		-\frac{2n |A|^2(H-h)}{H^3}
		=2(1-nQ)\left(2-\frac hH\right).
		\]
		
		Moreover, 	since
		\(
		Q-\frac1n=\frac{|\mathring A|^2}{H^2}
		\), if  \(\kappa_i\ge m>0\), then
		\[
		H\operatorname{tr}(A^3)-|A|^4
		=\sum_{i<j}\kappa_i\kappa_j(\kappa_i-\kappa_j)^2 \ge m^2\sum_{i<j}(\kappa_i-\kappa_j)^2
		=n\, m^2|\mathring A|^2.
		\]
		Consequently,
		\[
		\frac{2h}{H^3}\bigl(|A|^4-H\operatorname{tr}(A^3)\bigr)
		\le-\frac{2nhm^2}{H}\left(Q-\frac1n\right).
		\]

		Let \(T^*>t_0\) be the supremum of the times for which
		\[
		\frac{\lambda_0}{4}<\kappa_i<3\lambda_0,\qquad
		\frac{n\lambda_0}{4}<H,h<3n\lambda_0,\qquad
		\frac hH<\frac74
		\]
		holds on \([t_0,T^*)\). On this interval we may take
		\(m=\lambda_0/4\), and the preceding estimates yield
		\begin{equation}\label{eq:Q-coercive}
		(\partial_t-\Delta)Q
		\le\frac2H\langle\nabla H,\nabla Q\rangle
		-\mu_0\left(Q-\frac1n\right)
		\end{equation}
		for a constant \(\mu_0=\mu_0(n,R_0)>0\). At a spatial maximum of \(Q\), the
		drift term vanishes. Hence the maximum principle gives
		\begin{equation}\label{eq:Q-exponential}
		\max_{M_t} \frac{|\mathring A|^2}{H^2}
		\le e^{-\mu_0(t-t_0)}
		\max_{M_{t_0}} \frac{|\mathring A|^2}{H^2} \qquad \text{for every \quad \(t<T^*\).}
		\end{equation}

		Since \(H\) is uniformly bounded above and below on this interval,
		the latter implies
		\begin{equation}\label{eq:traceless-decay}
		\|\mathring A(\cdot,t)\|_{L^\infty}
		\le Ce^{-\gamma(t-t_0)}
		\end{equation}
		for some \(\gamma>0\). The uniform higher-order curvature estimates
		proved above, together with interpolation, give for every \(m\ge0\)
		\begin{equation}\label{eq:traceless-higher-decay}
		\|\nabla^m\mathring A(\cdot,t)\|_{L^\infty}
		\le C_m e^{-\gamma_m(t-t_0)}.
		\end{equation}
		This, by means of the Codazzi identity 
		$
		\nabla^i\mathring h_{ij}=\frac{n-1}{n}\nabla_jH,
		$
		gives $\|\nabla H(\cdot,t)\|_{L^\infty}
		\le Ce^{-\gamma(t-t_0)}$.

		By Proposition~\ref{prop:space-form-radial-closeness} and
		Corollary~\ref{cor:space-form-normal-control}, the radial graph
		functions satisfy
		\[
		0<\rho_*\le\rho(\cdot,t)\le\rho^*,
		\qquad
		|\nabla^\sigma\rho(\cdot,t)|_\sigma	=
		s_K(\rho)
		\sqrt{
			\left\langle\nu,\partial_r\right\rangle^{-2}-1
		}
		\le C.
		\]
		Since the induced metric is
		$g_t=d\rho\otimes d\rho+s_K(\rho)^2\sigma,
		$
		it follows that \(g_t\le C\sigma\), and hence
		$
		\operatorname{diam}(M_t,g_t)\le C.
		$
		Consequently,
		\[
		\operatorname{osc}_{M_t}H
		\le C\|\nabla H(\cdot,t)\|_{L^\infty}
		\le Ce^{-\gamma(t-t_0)}.
		\]
		Since \(H_{\min}\le h\le H_{\max}\), we obtain
		$\|H-h\|_{L^\infty(M_t)}
		\le Ce^{-\gamma(t-t_0)}$.
		Using \eqref{eq:traceless-higher-decay}, the evolution equations and
		parabolic interpolation once more, this improves to
		\begin{equation}\label{eq:speed-higher-decay}
		\|H-h\|_{C^m(M_t)}
		\le C_m e^{-\gamma_m(t-t_0)}
		\end{equation}
		for every \(m\ge0\). Using the radial graph parametrization fixed above, by the flow equation \eqref{vp-mcf}, the latter implies
		\[
		\int_{t_0}^{T^*}\|\partial_tX(\cdot,t)\|_{C^m}\,dt
		\le C_m\|\mathring A(\cdot,t_0)\|_{C^{m+1}}.
		\]
		Choose \(t_0=t_j\) sufficiently large that \(M_{t_0}\) belongs to a
		smaller neighbourhood whose closure is contained in the bootstrap
		region defining \(T^*\). The estimates above, with their dependence on
		the initial pinching made explicit, give
		\[
		\|\partial_tX(\cdot,t)\|_{C^m}
		\le
		C_m\varepsilon_*e^{-\gamma_m(t-t_0)},
		\]
		where
		$
		\varepsilon_*
		=
		\|\mathring A(\cdot,t_0)\|_{C^{m+1}}
		+
		\|H(\cdot,t_0)-h(t_0)\|_{C^m}
		\to 0
		$
		as \(t_0=t_j\to\infty\). Hence
		\[
		\|X(\cdot,t)-X(\cdot,t_0)\|_{C^m}
		\le
		\frac{C_m}{\gamma_m}\varepsilon_*
		\qquad\text{for every }t<T^*.
		\]
		Taking \(t_0\) sufficiently large, the solution remains at a uniform
		positive distance from the boundary of the bootstrap region. Here the bootstrap neighbourhood is understood in a fixed
		\(C^2\)-neighbourhood of \(S_{R_0}^1(p_*)\); membership in this
		neighbourhood controls \(\kappa_i\), \(H\), \(h\), and \(h/H\).
		
		In
		particular, none of the inequalities defining \(T^*\) can become an
		equality. If \(T^*<\infty\), continuity would then allow the bootstrap
		conditions to be extended beyond \(T^*\), contradicting its definition.
		Therefore \(T^*=\infty\).	In particular, \(M_t\) remains strictly convex for every \(t\ge t_0\).
		
		Equation \eqref{eq:speed-higher-decay} also shows that \(X(\cdot,t)\) is
		Cauchy in every \(C^m\)-norm. Thus it converges exponentially to a smooth
		limiting embedding \(X_\infty\). By \eqref{eq:traceless-decay}, the
		limiting hypersurface satisfies $\mathring A_\infty=0$. 
		Hence it is totally umbilical. Since it is closed, embedded and contained
		in an open hemisphere, it is a geodesic sphere
		$	M_\infty=\partial B_{R_\infty}^1(p_\infty)$.
		The enclosed volume is preserved, so
		$
		\operatorname{vol}\bigl(B_{R_\infty}^1(p_\infty)\bigr)
		=\operatorname{vol}\bigl(B_{R_0}^1(o)\bigr)
		$, and therefore \(R_\infty=R_0\). This concludes the proof when $n \geq 2$.
		
		Let us now consider the case of curves. As before, by Proposition~\ref{prop:space-form-subsequential-convergence} we have the smooth subsequential convergence
		\[
		M_{t_j}\longrightarrow
		S_*:=\partial B_{R_0}^{1}(p_*).
		\]

		We first show that the whole flow converges smoothly to $S_*$. Taking
		$p_*$ as the reference point, let $\varepsilon_j$ and $\theta_j$
		denote the annular and radial-tilt parameters of $M_{t_j}$. The
		smooth convergence above gives $
		\varepsilon_j+\theta_j\longrightarrow0$. As, by Proposition~\ref{prop:initial-space-form-reflection}, every fixed $c>0$ is a common reflection threshold for $M_{t_j}$ when $j$ is sufficiently large, we may choose common reflection thresholds $c_j\to0$.

		By Proposition~\ref{prop:space-form-radial-closeness}, the reflection threshold $c_j$ is preserved for
		every $t\geq t_j$. The quantitative converse in Corollary~\ref{cor:space-form-reflection-C1-equivalence},
		applied uniformly on the whole time interval $[t_j,\infty)$,
		therefore gives
		\[
		\sup_{t\geq t_j}
		\bigl\|\rho_{p_*}(\cdot,t)-R_0\bigr\|_{C^1}
		\longrightarrow0,
		\]
		where $\rho_{p_*}(\cdot,t)$ denotes the radial graph function of
		$M_t$ with respect to $p_*$. Since the estimate holds on the entire
		tail $[t_j,\infty)$, it yields
		\[
		\rho_{p_*}(\cdot,t)\longrightarrow R_0
		\qquad\text{in }C^1\text{ as }t\to\infty,
		\]
		without passing to a subsequence. The uniform higher-order estimates
		and interpolation then upgrade this convergence to every $C^m$
		norm. Hence
		$M_t\to S_*$ smoothly as $t\to\infty$.

		It remains to prove the exponential rate. By the definition of $h$,
		\begin{equation} \label{zero-avg}
		\int_{M_t}w\,ds=0 \qquad \text{for} \quad w:=\kappa-h.
		\end{equation}
		Moreover, if $Y$ is an ambient Killing field on $\mathbb S^2$, the
		one-parameter group of isometries generated by $Y$ preserves both
		length and enclosed area. The corresponding first-variation formulas
		give
		\begin{equation} \label{Killing-id}
		\int_{M_t}w\langle Y,\nu\rangle\,ds=0.
		\end{equation}

		On the other hand, the normal component
		of every Killing field $Y$ satisfies
		\[
		-\partial_s^2 \langle Y,\nu\rangle
		=
		\bigl(\kappa^2_{S_\ast}+\operatorname{Sec}_{\mathbb S^2}\bigr) \langle Y,\nu\rangle = (\cot^2R_0+1)  \langle Y,\nu\rangle = \frac1{\sin^2R_0}    \langle Y,\nu\rangle;
		\]
		see, for instance, \cite[Proposition~1]{FornariRipoll}. The normal components of the Killing fields obtained by moving the centre of $S_*$ in two independent directions are, up to a choice of origin and normalization, the sine and cosine functions on $S_*$. Hence,  on $M_t$, by \eqref{zero-avg} and
		\eqref{Killing-id}, $w$ is orthogonal to the constants  and to a two-dimensional space converging smoothly to this first eigenspace. Since $S_*$ is intrinsically a circle of radius $\sin R_0$, the next eigenvalue of $-\partial_s^2$ is $4\sin^{-2}R_0$; see [6, Chapter II.4]. The Rayleigh characterization (cf. \cite[Chapter~I.5]
		{ChavelEigenvalues}) and the smooth
		convergence $M_t\to S_*$ therefore imply that, for every
		$\varepsilon>0$ and all sufficiently large $t$,
		\[
		\int_{M_t}|\partial_sw|^2\,ds
		\geq
		\left(4 a_0-\varepsilon\right)
		\int_{M_t}w^2\,ds, \qquad \text{with} \quad a_0:=\frac1{\sin^2R_0}.
		\tag{6.xx}
		\]

		Now set $
		E(t):=\int_{M_t}w^2\,ds$. The evolution equations
		$
		\partial_t\kappa
		=
		\partial_s^2\kappa+(\kappa-h)(\kappa^2+1)$ and
		$
		\partial_tds=-\kappa(\kappa-h)\,ds
		$
		yield
		\[
		E'(t)
		=
		-2\int_{M_t}|\partial_sw|^2\,ds
		+
		2\int_{M_t}(\kappa^2+1)w^2\,ds
		-
		\int_{M_t}\kappa w^3\,ds,
		\]
		where the term containing $h'(t)$ vanishes because of \eqref{zero-avg}.

		Since $M_t\to S_*$ smoothly, for every $\varepsilon>0$ and all
		sufficiently large $t$, one can guarantee
		\[
		|\kappa^2+1-a_0|\leq\varepsilon,
		\qquad
		\|w\|_{L^\infty(M_t)}\leq\varepsilon,
		\qquad
		\|\kappa\|_{L^\infty(M_t)}\leq C_0,
		\]
		where $C_0$ is independent of $t$. Applying the preceding estimates, we obtain
		\[
		E'(t)
		\leq
		-2(4a_0-\varepsilon)E(t)
		+
		2(a_0+\varepsilon)E(t)
		+
		C_0\varepsilon E(t) \leq-3a_0E(t),
		\]
		after choosing $\varepsilon>0$ so that
		$
		(4+C_0)\varepsilon\leq3a_0.
		$
		Therefore, for all sufficiently large $t$,
		\[
		E(t)\leq Ce^{-3a_0t}.
		\]
		
		The uniform higher-order curvature estimates and interpolation now
		give, for every integer $m\geq0$, constants $C_m,\gamma_m>0$ such
		that
		\[
		\|\kappa(\cdot,t)-h(t)\|_{C^m(M_t)}
		\leq
		C_me^{-\gamma_mt}.
		\tag{6.xx}
		\]
		Finally, in radial coordinates with respect to $p_*$,
		$
		\partial_t\rho_{p_*}
		=
		\frac{h-\kappa}{\langle\nu,\partial_r\rangle}.
		$
		The denominator is uniformly bounded away from zero, and its
		derivatives are uniformly controlled. Since
		$\rho_{p_*}(\cdot,t)\to R_0$ smoothly, integration from $t$ to
		infinity gives
		\[
		\|\rho_{p_*}(\cdot,t)-R_0\|_{C^m}
		\leq
		C_me^{-\gamma_mt}.
		\]
		Thus $M_t$ converges exponentially in the smooth topology to $S_*$.
	\end{proof}
	
	\begin{cor}[Location of the limiting centre]
		Let \(c_0\) be as in Proposition~\ref{prop:initial-space-form-reflection}. Then
		\[
		\dist_{N_K}(o,p_\infty)\le c_0.
		\]
		In particular,
		$
		\dist_{N_K}(o,p_\infty)\to 0$
		as
		$\varepsilon_0+\theta_0\to 0$.
	\end{cor}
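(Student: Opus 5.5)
The plan is to pass the preserved reflection property to the limiting geodesic sphere and then observe that a geodesic sphere centred at distance \(d\) from \(o\) cannot reflect across any \(P_{v,s}\) with \(s<d\) in the direction \(v\) pointing towards its centre.

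First, the preservation step. By Proposition~\ref{prop:initial-space-form-reflection}, \(M_0\) reflects strictly up to \(P_{v,c_0}\) for every \(v\in\mathbb S^n_o\). By the space-form version of Theorem~\ref{thm:CG}, the same holds for every \(M_t\). In particular, for every \(s\ge c_0\) and every \(v\) we have \(\mathcal R_{v,s}(\Omega_t\cap\mathcal H^+_{v,s})\subset\Omega_t\).

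Second, the passage to the limit. By Proposition~\ref{prop:eventual-convexity-convergence}, \(M_t\to\partial B^K_{R_0}(p_\infty)\) smoothly, and hence \(\overline\Omega_t\to\overline{B^K_{R_0}(p_\infty)}\) in the Hausdorff sense. Let \(y\in B^K_{R_0}(p_\infty)\cap\mathcal H^+_{v,s}\). Then \(y\in\Omega_t\cap\mathcal H^+_{v,s}\) for all large \(t\), so \(\mathcal R_{v,s}(y)\in\Omega_t\). Letting \(t\to\infty\) gives \(\mathcal R_{v,s}(y)\in\overline{B^K_{R_0}(p_\infty)}\). Taking closures, we obtain
\[
\mathcal R_{v,s}\bigl(\overline{B^K_{R_0}(p_\infty)}\cap\overline{\mathcal H^+_{v,s}}\bigr)\subset\overline{B^K_{R_0}(p_\infty)}
\qquad\text{for all } s\ge c_0 \text{ and all } v.
\]
This non-strict inclusion is all we will need.

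Third, the contradiction. Suppose \(d:=\dist_{N_K}(o,p_\infty)>c_0\). Write \(p_\infty=\gamma_v(d)\) and pick \(s\in(c_0,d)\). The geodesic \(\gamma_v\) is orthogonal to \(P_{v,s}\) and passes through the centre \(p_\infty\), which lies at signed distance \(d-s>0\) on the \(+\) side. In the spherical case we use that everything stays in the hemisphere, so \(d+R_0<\pi/2\) and \(\gamma_v\) is minimizing on the relevant range. Consider the point \(x=\gamma_v(d+R_0)\) of the sphere, which sits at signed distance \(d-s+R_0\) from \(P_{v,s}\). Its reflection is \(\gamma_v(2s-d-R_0)\), at signed distance \(-(d-s+R_0)\). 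Along \(\gamma_v\), however, the closed ball only reaches signed distance \(-(R_0-(d-s))\). Since \(d-s>0\), the reflected point lies strictly outside \(\overline{B^K_{R_0}(p_\infty)}\). Its distance from \(p_\infty\) is \(2(d-s)+R_0>R_0\), measured along the minimizing geodesic \(\gamma_v\). This contradicts the inclusion of the second step, so \(d\le c_0\).

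Finally, the "in particular" statement. For any \(c>0\), Proposition~\ref{prop:initial-space-form-reflection} provides smallness thresholds on \(\varepsilon_0,\theta_0\) under which \(c\) is a common reflection threshold. The argument above then gives \(d\le c\) for all sufficiently small \(\varepsilon_0+\theta_0\).

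The main care point is the limit step. Strictness is lost in the limit, so the contradiction must come from a point landing strictly outside the closed limiting ball, which is why \(s\) is taken strictly between \(c_0\) and \(d\). One also has to confirm that the distances along \(\gamma_v\) are genuine distances in the spherical case, and this follows from the hemisphere localization in Proposition~\ref{prop:space-form-radial-closeness}.
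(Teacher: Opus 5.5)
Your proposal is correct and follows the same route as the paper: the reflection property up to $P_{v,c_0}$ is preserved along the flow and passed to the limiting geodesic ball, and the centre bound is then read off from that ball. The paper only asserts that this reflection property ``easily implies'' $\dist_{N_K}(o,p_\infty)\le c_0$; your choice of $s\in(c_0,d)$ and of the far point $\gamma_v(d+R_0)$, with the hemisphere localization guaranteeing minimizing geodesics in the spherical case, supplies exactly that omitted detail.
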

	
	\begin{proof}
		As in the Euclidean case, the preserved reflection property passes to
		the smooth limit. Hence the limiting ball
		$
		B_{R_0}^K(p_\infty)
		$
		reflects inside itself across every \(P_{v,s}\) with
		\(v\in\mathbb S_o^n\) and \(s\ge c_0\). This easily implies that the centre of the ball has distance at most $c_0$ from $o$.
		Since Proposition~\ref{prop:initial-space-form-reflection} implies the reflection property for every prescribed $c_0>0$, provided that $\varepsilon_0$ and $\theta_0$ are sufficiently small, the last assertion follows.
	\end{proof}

	\section{Extension to Lipschitz initial data}
	\label{sec:lip-euclidean}
	
	The annular-closeness and normal-oscillation assumptions make sense for
	radial graphs of merely Lipschitz regularity.  We prove that sufficiently
	small such data admit an instantaneously smooth volume preserving
	mean curvature flow.  The construction is geometric: smooth approximations
	inherit a common reflection threshold, and the weighted curvature estimate
	gives bounds independent of their initial curvature.

	In the following statement, we define $s_K(\cdot)$ as in \eqref{sine-cosine} when $K=\pm 1$, and $s_K(\rho)=\rho$ for $K=0$.
	
	\begin{thm}[Lipschitz stability in space forms] \label{thm:lipschitz-space-forms} Let \(K\in\{-1,0,1\}\), and let 
		\[M_0 = \left\{ \exp_o\bigl(\rho_0(\vartheta)\vartheta\bigr): \vartheta\in\mathbb S_o^n \right\} \subset N_K^{n+1}, \qquad \text{with} \qquad \rho_0\in W^{1,\infty}(\mathbb S_o^n).\] Assume that 
		\begin{equation}\label{eq:lipschitz-space-form-cone} R_0(1-\varepsilon_0) \le \rho_0 \le R_0(1+\varepsilon_0) \qquad \text{and} \qquad \frac{|\nabla^\sigma\rho_0|_\sigma} {s_K(\rho_0)} \le \tan\theta_0 \qquad\text{a.e. on }\mathbb S_o^n. 
		\end{equation} 
		When \(K=1\), require in addition that $R_0(1+\varepsilon_0)<\frac{\pi}{2}.$ If \(\varepsilon_0\) and \(\theta_0\) are sufficiently small, then
		\begin{enumerate}
			\item[{\rm (a)}] There exists a family of radial graphs $M_t = \left\{ \exp_o\bigl(\rho(\vartheta,t)\vartheta\bigr): \vartheta\in\mathbb S_o^n \right\}, \, t>0,$ which solves  \eqref{vp-mcf} classically for every \(t>0\) and is smooth for every positive time. If $\Omega_t$ denotes the region enclosed by $M_t$, then $ \operatorname{vol}(\Omega_t) = \operatorname{vol}(\Omega_0) \,\text{for every }t>0.$ Moreover,
			for all sufficiently small $t>0$,
			\[
			\label{eq:lip-initial-trace}
			\|\rho(\cdot,t)-\rho_0\|_{L^\infty(\mathbb S_o^n)}
			\leq C \sqrt{t}.
			\]
			In particular,
			$\rho(\cdot,t)\rightarrow\rho_0$ uniformly as $t\searrow0$,
			and $M_t$ converges to $M_0$ in the Hausdorff sense.
			
			\item[{\rm (b)}] 
			For every $T>0$ and every integer $m\geq0$, there exists
			$C_{m,T}<\infty$ such that
			\[
			\label{eq:lip-limit-smoothing-estimates}
			\sup_{0<t\leq T}
			t^{(m+1)/2}
			\|\nabla^m A(\cdot,t)\|_{L^\infty(M_t)}
			\leq C_{m,T}.
			\]
			
			\item[{\rm (c)}] The solution exists for all times and satisfies the same conclusions as in Theorems \ref{thm:main} and \ref{space-form-main} as $t \to \infty$. In particular, it converges to a geodesic sphere, exponentially in the $C^\infty$ topology. 
		\end{enumerate}
	\end{thm}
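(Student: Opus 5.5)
The plan is to approximate, derive curvature bounds that do not depend on the initial curvature, and pass to the limit. First I will mollify $\rho_0$ on $\mathbb S_o^n$ to get smooth $\rho_0^{(k)}\to\rho_0$ uniformly. Mollification preserves a.e. bounds on the gradient up to $o(1)$ errors coming from the variation of $s_K(\rho)$ and from the convolution on the sphere. Then I will add a small constant so that the enclosed volume equals $\operatorname{vol}(\Omega_0)$. The approximants then satisfy \eqref{eq:lipschitz-space-form-cone} with $2\varepsilon_0,2\theta_0$, say. By Proposition~\ref{reflect} (for $K=0$) or Proposition~\ref{prop:initial-space-form-reflection} (for $K=\pm1$), they share a common reflection threshold $c_0$. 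Theorem~\ref{thm:CG} preserves this threshold along each smooth flow $M^{(k)}_t$ on its maximal interval. Propositions~\ref{prop:later-C^1-closeness} and~\ref{prop:space-form-radial-closeness}, together with Corollary~\ref{cor:space-form-normal-control}, then give bounds $r_*\le r\le r^*$ and $\langle\nu,\partial_r\rangle\ge\sigma_*$ that are uniform in $k$ and $t$.

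\textbf{Main obstacle: a curvature estimate that ignores the initial data.} Proposition~\ref{bound-H} depends on $\|H(\cdot,0)\|_\infty$, so it cannot be used here. I will rerun the argument of Proposition~\ref{shape-bdd} with the weighted quantity $\mathscr F=|A|^2\varphi(v^2)$, but now $h$ is not known to be bounded. I plan to estimate it crudely:
\[
|h|\le \frac{1}{|M_t|}\int_{M_t}|H|\,d\mu_t\le C\sqrt{n}\,\max_{M_t}|A|\le C\mathscr F_{\max}^{1/2}.
\]
The same crude bound should handle the terms in $\mathfrak R$ (or $\mathcal R_K$) that involve $H$ and $h$, giving $|\mathfrak R|\le C(1+\mathscr F^{1/2})$. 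The contribution $2|A|^2v\varphi'\mathfrak R$ is then at most $C\mathscr F^{3/2}+C\mathscr F$. The term $2h\operatorname{tr}(A^3)\varphi$ becomes quartic, of size $C\mathscr F^{1/2}_{\max}\mathscr F^{3/2}$, which is comparable to the good term $-2k\mathscr F^2$. This is the delicate point. To control it I will exploit the smallness: since $v_*\to1$ as $c_0\to0$, I can take $k$ close to $1$ ($k<v_*^{-2}$). I will also use $|\operatorname{tr}A^3|\le|A|^3$ and $|h|\le \max|H|/\,$something. If this fails, a fallback is a Minkowski-type bound, $|h-n/R_0|\le (\eta_0/R_0)\max|H|$ from Lemma~\ref{bound-h}, which gives $|h|\le C+\eta_0 C\mathscr F_{\max}^{1/2}$ with $\eta_0\to0$. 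The quartic coefficient $\eta_0 C$ is then absorbable into $-2k\mathscr F^2$ once $\varepsilon_0,\theta_0$ are small.

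With this in hand, $\mathscr F_{\max}$ satisfies $\frac{d}{dt}\mathscr F_{\max}\le -c\mathscr F_{\max}^2+C$. ODE comparison then gives
\[
\mathscr F_{\max}(t)\le \frac{1}{ct}+C,
\]
independently of $k$, which is the $m=0$ case of (b). The maximal existence time of each approximant is therefore bounded below uniformly. In fact each approximant is global by the smooth theorems. The Bernstein estimates of Proposition~\ref{prop:long-time-existence}, applied on $[t/2,t]$ with the scaling of the $\mathscr F$ bound, give $t^{(m+1)/2}|\nabla^mA|\le C_{m,T}$.

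Next I will pass to the limit. Writing each $M^{(k)}_t$ as a radial graph $\rho^{(k)}(\cdot,t)$, the bounds above give uniform $C^\infty$ bounds on $\mathbb S_o^n\times[\tau,T]$ for every $\tau>0$. A diagonal Arzel\`a--Ascoli argument then yields a smooth limit solving \eqref{vp-mcf}, and volume passes to the limit. For the initial trace I use
\[
\partial_t\rho=\frac{h-H}{\langle\nu,\partial_r\rangle},
\]
so $|\partial_t\rho^{(k)}|\le C t^{-1/2}$ uniformly in $k$. Integrating gives $\|\rho^{(k)}(\cdot,t)-\rho^{(k)}_0\|_\infty\le C\sqrt t$, which passes to the limit to give (a).

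Finally, for (c): the limit inherits the reflection threshold $c_0$ and, for $t\ge1$, has bounded curvature. Restarting the flow at $t=1$ from smooth data, the arguments of Sections~\ref{LTEconv} and~\ref{sec:space-forms} (Propositions~\ref{prop:euclidean-eventual-convexity} and~\ref{prop:eventual-convexity-convergence}) apply verbatim. They give eventual convexity and exponential convergence, with uniqueness after time $1$ coming from the smooth theory.
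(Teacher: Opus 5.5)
Your proposal is correct and follows the paper's proof essentially step for step. The shared steps are: volume-normalized smooth approximants with a common reflection threshold; a direct Ecker--Huisken-type estimate for $\mathscr F=|A|^2\varphi(v^2)$ that bypasses Proposition~\ref{bound-H}; a Riccati comparison giving $\mathscr F_{\max}\le C(1+t^{-1})$ independently of the initial curvature; Bernstein estimates and compactness; the $\sqrt t$ trace from $\partial_t\rho=(h-H)/\langle\nu,\partial_r\rangle$; and a restart at positive time.

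The paper makes your ``$k$ close to $1$'' step precise as follows. It uses $\mathscr F\ge\varphi(1)|A|^2=|A|^2/(1-k)$, which gives $2|h|\,|\operatorname{tr}(A^3)|\,\varphi(v^2)\le 2\sqrt n\,(1-k)\,\mathscr F_{\max}^2$. This is absorbed into $-2k\mathscr F^2$ once $\sqrt n/(1+\sqrt n)<k<q_*(c_0)^2$. Your Minkowski fallback, through Lemma~\ref{bound-h} and its space-form analogue in Proposition~\ref{prop:space-form-mean-curvature-bound}, also works, since it uses only the reflection-derived support-function bound. The one remaining difference is minor: the paper smooths $\log(\rho_0/R_0)$, resp.\ $\Psi_K(\rho_0)$, by the heat semigroup, so the cone condition is preserved exactly rather than up to $o(1)$.
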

	
	By Rademacher's theorem, the gradient in \eqref{eq:lipschitz-space-form-cone} can be equivalently interpreted as the classical gradient at points of differentiability, or in the weak sense. By \eqref{eq:space-form-radial-normal}, we see that the requirement on $\nabla^\sigma\rho_0$ in \eqref{eq:lipschitz-space-form-cone} is equivalent to the condition  $\langle\nu_0,\partial_r\rangle \geq \cos \theta$ in \eqref{nor-osc} and \eqref{eq:space-form-initial-angle}. Observe that one could give an equivalent formulation in terms of the smallness of the Lipschitz constant of $\rho(\cdot)$ (depending on $R_0$).

	\begin{proof}
		
		The construction proceeds by smooth approximation. We give the complete argument in Euclidean space, and in the end indicate the modifications required when \(K=\pm1\).

		In order to approximate smoothly our initial datum $\rho_0\in W^{1,\infty}(\mathbb S^n)$ in a way which preserves the bounds in \eqref{eq:lipschitz-space-form-cone}, it is convenient to work first with the function \begin{equation} \label{eucl-w}
		w_0=\log\frac{\rho_0}{R_0},
		\end{equation}
		which satisfies
		\[
		\log(1-\varepsilon_0)
		\le
		w_0
		\le
		\log(1+\varepsilon_0)
		\qquad\text{and } \qquad
		|\nabla^\sigma w_0|_\sigma
		\le
		\tan\theta_0
		\qquad\text{a.e.}
		\] 
		
		Let $P_t=e^{t\Delta_\sigma}$
		denote the heat semigroup on \(\mathbb S^n\), and choose any sequence
		\(t_j\searrow0\). Set
		\[
		\widetilde w_0^{(j)}=P_{t_j}w_0.
		\]
		Then the functions $\widetilde w_0^{(j)}$ are in $C^\infty(\mathbb S^n)$, satisfy $\log(1-\varepsilon_0)
		\le
		\widetilde w_0^{(j)}
		\le
		\log(1+\varepsilon_0)$, and converge to $w_0$
		uniformly on $\mathbb S^n,$
		since \(w_0\) is Lipschitz. Moreover, as 
		$\operatorname{Ric}_\sigma=(n-1)\sigma$, well known properties of the heat kernel (cf. \cite[Theorem 1.3 and Corollary 1.4]{Sturm}) imply that
		\[
		\|\nabla^\sigma\widetilde w_0^{(j)}\|_{L^\infty}
		\le
		e^{-(n-1)t_j}
		\|\nabla^\sigma w_0\|_{L^\infty}
		\le
		\tan\theta_0.
		\]

		We then define the functions
		\(
		\widetilde\rho_0^{(j)}
		=
		R_0e^{\widetilde w_0^{(j)}},
		\)
		which are smooth approximants of $\rho_0$. We multiply them by a factor	
		\(a_j>0\), chosen so that
		\(
		\rho_0^{(j)}:
		=
		a_j\widetilde\rho_0^{(j)}
		\)
		encloses the same volume as \(M_0\). Then it is easy to check that $a_j \to 1$, and that the functions $\rho_0^{(j)}$ converge uniformly to $\rho_0$. In addition, they satisfy
		\begin{equation}\label{eq:lip-approximation-annular-bound-1}
		R_0(1-\varepsilon_0 - \delta_j)
		\le
		\rho_0^{(j)}
		\le
		R_0(1+\varepsilon_0 + \delta_j)
		\end{equation}
		for suitable numbers \(\delta_j\searrow0\) and,  for every sufficiently large \(j\),
		\begin{equation}\label{eq:lip-approximation-annular-bound-2}
		\big| \nabla^\sigma\log\rho_0^{(j)}
		\big|_\sigma
		\le
		\tan \theta_0.
		\end{equation}

		For each $j \geq 1$, we now solve the flow \eqref{vp-mcf} with the smooth initial data \(M_0^{(j)}\) and denote by \(M_t^{(j)},\) the corresponding smooth solutions of \eqref{vp-mcf}. Using properties 
		\eqref{eq:lip-approximation-annular-bound-1} and \eqref{eq:lip-approximation-annular-bound-2}, we can apply our previous results  to obtain uniform $C^1$-bounds on the solutions.
		
		In fact, let \(c_0\) be the reflection threshold fixed in Remark~\ref{rem:choice-reflection-threshold}. As inequality \eqref{size_c0} is strict, from Proposition \ref{reflect} we obtain that \(c_0\) is a common reflection threshold for  \(M_0^{(j)}\) for every sufficiently large \(j\). Then we can apply the subsequent results of Section \ref{C1control}, to show that the reflection property is preserved along the flow, and the solutions satisfy, for every \(j\) and \(t\),
		\begin{equation}\label{eq:lip-uniform-radial-control}
		R_0-2c_0
		\le
		|x^{(j)}|
		\le
		R_0+2c_0,
		\end{equation}
		\begin{equation}\label{eq:lip-uniform-normal-control}
		q^{(j)}:=\left\langle
		\nu^{(j)},\frac{x^{(j)}}{|x^{(j)}|}
		\right\rangle
		\ge
		q_*(c_0)
		:=
		\left(
		1-
		\frac{c_0^2}{(R_0-2c_0)^2}
		\right)^{1/2}
		>0.
		\end{equation}
		By the results of Sections~\ref{sec:curvature-control} and \ref{LTEconv}, we obtain that each \(M_t^{(j)}\) exists globally and converges to a sphere. We want to show that these solutions converge, as $j \to \infty$, to a limit flow \(M_t\) which satisfies the conclusions of our theorem. To this purpose, the crucial step is to obtain uniform curvature bounds on \(M_t^{(j)}\).

		To estimate the curvature, we will adapt the strategy of Section \ref{sec:curvature-control} in a way that the bounds are independent of the curvature of the initial data. This time we no longer prove a separate bound on $H$ as in Proposition \ref{bound-H}; instead, we proceed directly to estimating $|A|^2$ by a suitable adaptation of the technique of Proposition~\ref{shape-bdd}. 
		
		We first observe that, by the choice of \(c_0\) in
		Remark~\ref{rem:choice-reflection-threshold}, the constant in \eqref{eq:lip-uniform-normal-control} satisfies \(q_*(c_0)\to1\) as \(c_0/R_0\to0\). We may therefore
		fix $k$ and choose the initial geometric parameters sufficiently small that
		\begin{equation}\label{eq:lip-weight-parameter}
		\frac{\sqrt n}{1+\sqrt n}
		<
		k
		<
		q_*(c_0)^2.
		\end{equation}
		Set
		\(
		v=(q^{(j)})^{-1}
		\) 
		and define $\mathscr F$ as in \eqref{def-EH}.

		By \eqref{eq:lip-uniform-normal-control}, it holds
		$
		1\le v\le q_*(c_0)^{-1}
		$
		and hence \(kv^2<1\). Thus \(\mathscr F\) and \(|A|^2\)
		are uniformly equivalent; in particular, 
		$
		\mathscr F
		\ge
		\varphi(1)|A|^2$. 
		
		Now let
		$\mathscr G_j(t)
		:=
		\max_{M_t^{(j)}}F$, 
		and compute hereafter at a point where the maximum is attained. Arguing as in the proof of Proposition \ref{shape-bdd}, but without using any bound for $|h|$ and $|H|$, we obtain
		\begin{align} \label{equation-G}
		D^+\mathscr G_j
		\le
		-2k\mathscr G_j^2
		+C(1 + |h|+ |H|)\mathscr G_j                                    
		-
		2h \operatorname{tr}(A^3)\varphi(v^2),
		\end{align}
		in the upper
		Dini-derivative sense. 
		Here \(C\) depends only on \(n\), \(R_0\), and the uniform
		first-order bounds in \eqref{eq:lip-uniform-radial-control} and
		\eqref{eq:lip-uniform-normal-control}. In particular, \(C\) is
		independent of \(j\) and of the curvature of \(M_0^{(j)}\).

		For every \(t\in(0,T_j)\),
		\begin{equation}\label{eq:lip-h-by-Fmax}
		\begin{aligned}
		|h(t)|
		&\le
		\|H(\cdot,t)\|_{L^\infty(M_t^{(j)})}\le
		\sqrt n\,
		\|A(\cdot,t)\|_{L^\infty(M_t^{(j)})} \le
		\sqrt{n(1-k)}\,
		\mathscr G_j(t)^{1/2}.
		\end{aligned}
		\end{equation}
		Then the last term in \eqref{equation-G}, which has an indefinite sign, can be estimated as follows:
		\begin{align}
		2|h|\,|\operatorname{tr}(A^3)|\varphi(v^2)
		&\le
		2\sqrt{n(1-k)}\,\mathscr G_j^{1/2}
		\frac{\mathscr G_j^{3/2}}{\sqrt{\varphi(1)}}
		=
		2\sqrt n(1-k)\mathcal G_j^2.
		\label{eq:lip-critical-nonlocal-term}
		\end{align}

		Combining the above estimates, we reach
		\begin{align}
		D^+\mathscr G_j
		\le{}&
		-2\bigl(k-\sqrt n(1-k)\bigr)\mathscr G_j^2 +
		C\mathscr G_j (1 +
		C\mathscr G_j^{1/2}).
		\label{eq:lip-Fmax-preliminary-ODE}
		\end{align}
		By \eqref{eq:lip-weight-parameter}, we can 
		choose
		$
		0<\eta<
		k-\sqrt n(1-k)
		$ so that 
		Young's inequality gives
		$
		C\mathscr G_j^{3/2}
		\le
		2\eta\mathscr G_j^2+C_\eta.
		$
		Consequently, there exist constants \(\mu>0\) and \(C<\infty\),
		independent of \(j\), such that
		\begin{equation}\label{eq:lip-Fmax-ODE}
		D^+\mathscr G_j
		\le
		-\mu\mathscr G_j^2
		+
		C(\mathscr G_j+1).
		\end{equation}
		
		Choose \(M_*>0\), independently of \(j\), such that $C(s+1)\le\frac{\mu}{2}s^2$ for every $s\ge M_*$. Then we get $D^+\mathscr G_j \le -\frac{\mu}{2}\mathscr G_j^2$ whenever $\mathscr G_j\ge M_*.$ A comparison argument gives $\mathscr G_j(t) \le \max\left\{ M_*, \frac{2}{\mu t} \right\}$ for $0<t<T_j$. Consequently, we conclude that
		\begin{equation}\label{eq:lip-instantaneous-curvature-estimate}
		\sup_{M_t^{(j)}}|A|^2
		\le
		C\left(1+\frac1t\right),
		\qquad
		t>0
		\end{equation}
		In particular,
		\begin{equation}\label{eq:lip-instantaneous-H-h-estimate}
		\|H(\cdot,t)\|_{L^\infty(M_t^{(j)})}
		+
		|h(t)|
		\le
		C\left(1+\frac1{\sqrt t}\right),
		\qquad
		t>0.
		\end{equation}
		The evolution equations for the covariant derivatives of \(A\), together
		with the standard interior Bernstein estimates on backward time intervals of length comparable to $\min\{t,1\}$, then give
		\begin{equation}\label{eq:higher}
		\sup_{M_t^{(j)}} \|\nabla^mA(\cdot,t)\|^2 
		\le
		C\left(1+\frac1{t^{m+1}}\right),
		\qquad
		t>0.
		\end{equation}
		All constants are independent of the approximating sequence and of the
		initial curvature. Therefore the estimates give
		smooth compactness away from $t=0$ 
		and allow to pass to the limit in the smooth approximating flows.

		Fix \(0<\tau<T<\infty\). The above estimates give uniform \(C^\infty\)-bounds for the radial functions
		\(\rho^{(j)}\) on
		$
		\mathbb S^n\times[\tau,T].
		$
		Arzelà--Ascoli's theorem and a diagonal argument therefore yield a subsequence
		which converges smoothly on every compact subset of
		\(\mathbb S^n\times(0,\infty)\) to a smooth positive function
		\[
		\rho\in
		C^\infty\bigl(\mathbb S^n\times(0,\infty)\bigr).
		\]
		Passing to the limit in the radial equation shows that the associated
		hypersurfaces \(M_t\) solve \eqref{vp-mcf} classically for every
		\(t>0\).
		
		It remains to identify the initial trace. For the radial
		parametrization,
		$
		\partial_tx^{(j)}
		=
		\partial_t\rho^{(j)}\,\partial_r.
		$
		Taking the scalar product with \(\nu^{(j)}\) and using
		\eqref{vp-mcf}, we obtain
		\[
		\partial_t\rho^{(j)}
		=
		\frac{h^{(j)}-H^{(j)}}
		{\langle\nu^{(j)},\partial_r\rangle}.
		\]
		Estimates  \eqref{eq:lip-uniform-normal-control} and 
		\eqref{eq:lip-instantaneous-H-h-estimate} give
		\[
		|\partial_t\rho^{(j)}|
		\le
		C\left(1+t^{-1/2}\right) \le C t^{-1/2}, \qquad 0<t \leq 1.
		\]

		Since the right-hand side is integrable at \(t=0\), integration in time
		yields
		\begin{equation}\label{eq:lip-approximation-initial-trace}
		\left\|
		\rho^{(j)}(\cdot,t)-\rho_0^{(j)}
		\right\|_{L^\infty}
		\le
		C \sqrt t,
		\end{equation}
		where \(C\) is independent of \(j\).	We know that $\rho_0^{(j)}
		\longrightarrow
		\rho_0
		\,\text{uniformly on }\mathbb S^n,
		$
		therefore we can pass to the limit in
		\eqref{eq:lip-approximation-initial-trace} and obtain
		\[
		\|\rho(\cdot,t)-\rho_0\|_{L^\infty}
		\le
		C\sqrt t,
		\]
		which proves \eqref{eq:lip-initial-trace}. Since $M_t$ and $M_0$ are radial graphs over the same sphere, the uniform convergence of their radial functions also implies that $M_t\to M_0$ in the Hausdorff sense as $t\searrow0$.
		
		Each approximating flow is volume-preserving, and starts from initial  which, by construction,
		enclose the same volume
		as \(M_0\). Therefore
		$
		\operatorname{vol}(\Omega_t^{(j)})
		=
		\operatorname{vol}(\Omega_0).
		$
		For every fixed \(t>0\), smooth convergence of the radial functions
		implies convergence of the enclosed volumes. Hence
		$
		\operatorname{vol}(\Omega_t)
		=
		\operatorname{vol}(\Omega_0).
		$
		In addition, estimates \eqref{eq:higher} pass to the
		smooth limit on every compact positive-time interval and yield conclusion (b) of Theorem \ref{thm:lipschitz-space-forms}.
		
		The flow $M_t$ we have constructed starts with Lipschitz data and becomes instantaneously smooth. For any 
		\(t_0>0\) the hypersurface \(M_{t_0}\) satisfies the hypotheses of the convergence result Theorem~\ref{thm:main} for smooth initial data, provided the original geometric parameters are chosen sufficiently small. We conclude that the limiting flow becomes strictly convex for all
		sufficiently large times and converges exponentially in the smooth
		topology to a round sphere enclosing the same volume as \(M_0\).
		
		The construction gives existence through a subsequential limit of smooth
		approximations. As in \cite{EckerHuiskenInterior}, no uniqueness assertion is made here for the Lipschitz
		initial-value problem; in particular, the limiting evolution may a
		priori depend on the chosen approximating sequence.

		We conclude by indicating the modifications required for
		\(K\in\{-1,1\}\). The proof follows the Euclidean construction and we only record the points at
		which the ambient geometry changes the argument.
		
		For the approximation, instead of the Euclidean function in \eqref{eucl-w}, 
		we use the conformal radial variable
		\begin{equation} \label{radial-reform}
		w_0 = \Psi_K(\rho_0)-\Psi_K(R_0), \qquad \Psi_K'(r)=\frac1{s_K(r)}, 
		\end{equation} 
		for which the condition on the normal in \eqref{eq:lipschitz-space-form-cone}
		again becomes $
		|\nabla^\sigma w_0|_\sigma \le \tan\theta_0 \qquad \text{a.e.}
		$

		The heat-semigroup approximation therefore preserves the
		same conformal-gradient bound. The enclosed volume is normalized by
		adding a spatially constant number \(b_j\) to the approximating
		conformal radial function. Indeed, since \(\Psi_K'>0\), the radial
		function
		\[
		\rho_b
		=
		\Psi_K^{-1}
		\bigl(
		\Psi_K(R_0)+\widetilde w_0^{(j)}+b
		\bigr)
		\]
		is pointwise strictly increasing in \(b\), and hence so is its enclosed
		volume. There is consequently a unique \(b_j\) for which the
		corresponding hypersurface encloses the same volume as \(M_0\).
		Uniform convergence implies \(b_j\to0\), while
		\[
		\nabla^\sigma
		\bigl(
		\widetilde w_0^{(j)}+b_j
		\bigr)
		=
		\nabla^\sigma\widetilde w_0^{(j)},
		\]
		so the volume normalization does not affect the cone condition.
		
		After discarding finitely many terms, Proposition~\ref{prop:initial-space-form-reflection}
		applies to all the approximations with a common reflection threshold.
		The preservation of reflection, Proposition~\ref{prop:space-form-radial-closeness},
		and Corollary~\ref{cor:space-form-normal-control} then give constants
		$
		0<r_*\le r^*,
		\,
		q_*>0,
		$
		independent of the approximation, such that
		\begin{equation}\label{eq:lip-space-form-geometric-control}
		r_*
		\le
		\rho^{(j)}
		\le
		r^*,
		\qquad
		\left\langle
		\nu^{(j)},\partial_r
		\right\rangle
		\ge
		q_*.
		\end{equation}
		In the spherical case, the parameters are chosen so that
		$
		r^*<\frac{\pi}{2}.
		$
		Thus all functions of the radial variable occurring in the evolution
		equations, including \(s_K(\rho)\), \(c_K(\rho)\), and the required
		reciprocals, are uniformly bounded on the range of the approximating
		flows.
		
		We can next estimate $|A|^2$ by the same maximum principle of the Euclidean case,
		since the terms involving the
		ambient curvature are of lower order. 
		The estimates on $H$ and on the derivatives of $A$ follow similarly. This allows us to apply the same compactness argument as in the Euclidean case, and to obtain a limiting smooth flow with all the desired properties. 
		This completes the proof of the convergence result for Lipschitz initial data.
	\end{proof}


\begin{thebibliography}{00}
		
		\bibitem{AlbertNiclosCabezasRivas} S. Albert-Niclòs and E. Cabezas-Rivas,
		Constrained curvature flows on pinched Hadamard surfaces, {\it J. Differential Equations} {\bf 488} (2027), Paper No. 114817, 40 pp.
	
		
		\bibitem{Alexandrov} A. D.  Alexandrov, A characteristic property of spheres, {\it Ann. Mat. Pura Appl.} {\bf 58} (1962), 303--315.
		
		\bibitem{AlFr03} N. D. Alikakos and A. Freire, {The normalized mean curvature flow for a small bubble in a Riemannian manifold}, {\it J. Differential Geometry} {\bf 64} (2003) 247--303.
		
		\bibitem{ACW21}	B. Andrews, X. Chen and Y. Wei,  Volume preserving flow and Alexandrov-Fenchel type inequalities in hyperbolic space, {\it J. Eur. Math. Soc.} {\bf 23} (2021), 2467--2509.
		
		\bibitem{AndrewsWei18}	B. Andrews and Y. Wei,  Quermassintegral preserving curvature flow in hyperbolic space, {\it Geom. Funct. Anal.} {\bf 28} (2018), 1183--1208.
		
		
		\bibitem{AndrewsWei21}	B. Andrews and Y. Wei,Volume preserving flow by powers of the k-th mean curvature
		Andrews, Ben; Wei, Yong
		{\it J. Differential Geom.} {\bf 117} (2021), 193--222.
		
		
		
		
		\bibitem{AntonopoulouKaraliSigal}
		D. Antonopoulou, G. Karali and I. M. Sigal,
		Stability of spheres under volume-preserving mean curvature flow,
		{\it	Dyn. Partial Differ. Equ.} \textbf{7} (2010), no. 4, 327--344.
		
		
		
		\bibitem{BourniEspinarMishra}
		T. Bourni, J. M. Espinar and A. Mishra,
		Aleksandrov reflection for geometric flows in hyperbolic spaces,
		{\it preprint} (2026), \url{arXiv:2602.12186}.
		
		
		\bibitem{CabezasRivasMiquel}
		E. Cabezas-Rivas and V. Miquel,
		Volume preserving mean curvature flow in the hyperbolic space,
		{\it Indiana Univ. Math. J.} \textbf{56} (2007), no.~5, 2061--2086.
		
		\bibitem{ChavelEigenvalues}
		I.~Chavel,
		\emph{Eigenvalues in Riemannian Geometry},
		Pure and Applied Mathematics, vol.~115,
		Academic Press, Orlando, FL, 1984.
		
		\bibitem{ChaoLingWang2013} X.-L. Chao, X.-R. Ling and X.-L. Wang, \newblock On a planar area-preserving curvature flow, \newblock Proc. Amer. Math. Soc. \textbf{141} (2013), no.~5, 1783--1789.
		
		
		\bibitem{Chow23} B. Chow, Aleksandrov reflection for extrinsic geometric flows of Euclidean hypersurfaces, {\it Adv. Nonlinear Stud.} {\bf 23} (2023), no.~1, Paper No. 20220034, 22 pp.
		
		\bibitem{Chow98} B. Chow, Geometric aspects of Aleksandrov reflection and gradient estimates for parabolic equations, {\it Comm. Anal. Geom.} {\bf 5} (1997), no.~2, 389--409.
		
		\bibitem{ChowGull} B. Chow and R.~D. Gulliver II, Aleksandrov reflection and geometric evolution of hypersurfaces, {\it Comm. Anal. Geom.} {\bf 9} (2001), no.~2, 261--280.
		
		\bibitem{CiraoloRoncoroniVezzoni}
		G. Ciraolo, A. Roncoroni and L. Vezzoni,
		Quantitative stability for hypersurfaces with almost constant curvature
		in space forms,
		{\it Ann. Mat. Pura Appl.} \textbf{200} (2021), 2043--2083.
		
		\bibitem{ddkk} D. De Gennaro, A. Diana, A. Kubin and A. Kubin, Stability of the surface diffusion flow and volume-preserving mean curvature flow in the flat torus, {\it Math. Ann.} {\bf 390} (2024), no.~3 4429--4461.
		
		\bibitem{EckerHuiskenInterior}
		K. Ecker and G. Huisken,
		Interior estimates for hypersurfaces moving by mean curvature,
		{\it Invent. Math.} \textbf{105} (1991), no. 3, 547--570.
		
		\bibitem{EsSi98} J. Escher and G. Simonett,  {The volume preserving mean curvature flow near spheres}, {\it Proc. Am. Math. Soc.} {\bf 126} (1998), 2789--2796.
		
		\bibitem{FornariRipoll} S.~Fornari and J.~Ripoll, Killing fields, mean curvature, translation maps, {\it Illinois J. Math.} \textbf{48} (2004), no.~4, 1385--1403.
		
		\bibitem{Gage1986} M.~E. Gage, \newblock On an area-preserving evolution equation for plane curves, \newblock in \emph{Nonlinear Problems in Geometry}, Contemp. Math. \textbf{51}, Amer. Math. Soc., Providence, RI, 1986, 51--62.
		
		\bibitem{Hartley} D.~J. Hartley, Stability of geodesic spheres in $\Bbb S^{n+1}$ under constrained curvature flows, {\it J. Evol. Equ.} {\bf 17} (2017), no.~4, 1209--1225.
		
		\bibitem{Harvie} B. Harvie, On weak inverse mean curvature flow and Minkowski-type inequalities in hyperbolic space, {\it Calc. Var. Partial Differential Equations} {\bf 65} (2026), no.~7, Paper No. 212, 26 pp.
		
		\bibitem{HuangLinZhang}
		Z. Huang, L. Lin and Z. Zhang,
		Stability of the volume preserving mean curvature flow in hyperbolic space,
		{\it Peking Math. J.} \textbf{6} (2023), 541--560.
		
		\bibitem{Hu-sph} G. Huisken, Deforming hypersurfaces of the sphere by their mean curvature, {\it Math. Z.} {\bf 195} (1987), no.~2, 205--219.
		
		\bibitem{Hu87} G. Huisken, {The volume preserving mean curvature flow}, {\it J. Reine Angew. Math.} {\bf 382} (1987) 35--48.
		
		\bibitem{HuiskenYau} G. Huisken and S.-T. Yau, Definition of center of mass for isolated physical systems and unique foliations by stable spheres with constant mean curvature, {\it Invent. Math.} {\bf 124} (1996), 281--311.
		
		\bibitem{LiVPMCF}
		H. Li,
		The volume-preserving mean curvature flow in Euclidean space,
		{\it Pacific J. Math.} \textbf{243} (2009), no.~2, 331--355.
		
		\bibitem{LiXu} S. Li, H.~W. Xu and E. Zhao, Volume preserving mean curvature flow of $L^2$-almost umbilical hypersurfaces in hyperbolic space, {\it Internat. J. Math.} {\bf 35} (2024), no.~8, Paper No. 2450029, 15 pp.
		
		\bibitem{McCoyMixedVolume}
		J. A. McCoy,
		The mixed volume preserving mean curvature flow,
		{\it Math. Z.} \textbf{246} (2004), no.~1--2, 155--166.
		
		
		
		\bibitem{MiglioranzaThesis} M. Miglioranza, \emph{The Volume Preserving Mean Curvature Flow in a Compact Riemannian Manifold}, Ph.D. thesis, University College London, 2020.
		
		\bibitem{SinestrariTenan} C. Sinestrari and J. Tenan, Volume preserving mean curvature flow of
		round surfaces in asymptotically flat spaces, {\it Ann. Henri Poincar\'e}, (2026) online first.
		
		\bibitem{Sturm} M.-K. von~Renesse and K.-T. Sturm, Transport inequalities, gradient estimates, entropy, and Ricci curvature, {\it Comm. Pure Appl. Math.} {\bf 58} (2005), no.~7, 923--940.
		
		\bibitem{Tso} K. Tso, Deforming a hypersurface by its Gauss-Kronecker curvature,  {\it Comm. Pure Appl. Math.} {\bf 38} (1985), 867--882. 
		
		\bibitem{wei_yang_VPF_Hyperbolic} Y. Wei and B. Yang, Volume preserving flows for convex curves and surfaces in the hyperbolic space, {\it J. Funct. Anal.} {\bf 283} (2022), no.~11, Paper No. 109685, 28 pp.
		
		\bibitem{XuLZ} H.~W. Xu, Y. Leng and E. Zhao, Volume-preserving mean curvature flow of hypersurfaces in space forms, {\it Internat. J. Math.} {\bf 25} (2014), no.~3, 1450021, 19 pp.
		
		
	\end{thebibliography}
\end{document}